\documentclass[10pt,oneside, a4paper,reqno]{amsart}
\usepackage[T1]{fontenc}
\usepackage[utf8]{inputenc}
\usepackage[UKenglish]{babel}

\usepackage[autostyle,italian=guillemets]{csquotes}
\usepackage[style=alphabetic,backend=bibtex,useprefix=true]{biblatex}
\usepackage{pdfpages}
\usepackage{geometry}
\usepackage{enumitem}
\usepackage{microtype}
\usepackage{mathtools}
\usepackage{stmaryrd}
\usepackage{multicol}
\usepackage{comment}
\usepackage{xspace} 
\usepackage{accents}
\usepackage{calc}
\usepackage{fancyhdr}

\usepackage[colorlinks=true,linkcolor=blue, citecolor=red]{hyperref}
\usepackage{colorprofiles}

\usepackage{amscd,amssymb,amsmath,graphicx,verbatim,mathrsfs,xcolor, amsthm}
\usepackage{wasysym}
\usepackage{bbm}
\usepackage{tikz}
\usepackage{tikz-cd}
\usetikzlibrary{calc,positioning}
\usetikzlibrary {decorations.pathmorphing, decorations.pathreplacing, decorations.shapes, fit,patterns,patterns.meta,backgrounds} 
\usepackage[new]{old-arrows}
\tikzset{circle/.style = {rounded corners,line width=1bp,color=#1}}
\usetikzlibrary{babel}

\usepackage{changepage}
\usepackage{graphicx}
\usepackage[capitalize]{cleveref}

\usepackage{quiver}
\usetikzlibrary{arrows.meta, matrix}

\newtheorem{theorem}{Theorem}[section]
\newtheorem{corollary}[theorem]{Corollary}
\newtheorem{lemma}[theorem]{Lemma}

\theoremstyle{definition}
\newtheorem{definition}[theorem]{Definition}
\newtheorem{example}[theorem]{Example}
\newtheorem{remark}[theorem]{Remark}

\makeatletter
\newtheorem*{rep@theorem}{\rep@title}
\newcommand{\newreptheorem}[2]{%
	\newenvironment{rep#1}[1]{%
		\def\rep@title{#2 \ref{##1}}%
		\begin{rep@theorem}}%
		{\end{rep@theorem}}}
\makeatother

\newreptheorem{theorem}{Theorem}
\newreptheorem{lemma}{Lemma}
\newreptheorem{corollary}{Corollary}

\mathchardef\mhyphen="2D                            
\newcommand{\K}{\mathbf{k}}                         
\newcommand{\Z}{\mathbb{Z}}                         
\newcommand{\op}{^{\mathrm{op}}}                    
\newcommand{\card}[1]{\operatorname{card}(#1)}      
\newcommand{\matspace}[1]{\mathbb{M}_{#1}(\K)}      
\newcommand{\dimvec}[1]{\underline{\dim}(#1)}       
\newcommand{\gldim}[1]{\operatorname{gl.dim}(#1)}   
\newcommand{\sth}{\,\,\vert\,\,}                    
\newcommand{\defeq}{\coloneqq}                      

\DeclareMathOperator{\Hom}{Hom}          
\DeclareMathOperator{\Ext}{Ext}          
\DeclareMathOperator{\End}{End}          
\DeclareMathOperator{\Endol}{endol}      
\DeclareMathOperator*{\colim}{colim}     
\DeclareMathOperator{\Rad}{Rad}          

\newcommand{\Rep}[1]{\operatorname{Rep}_\K(#1)}     
\newcommand{\Repf}[1]{\operatorname{rep}_\K(#1)}    
\newcommand{\Mod}[1]{#1\text{-}\operatorname{Mod}}  
\newcommand{\Modf}[1]{#1\text{-}\operatorname{mod}} 

\newcommand{\qAtilde}{\widetilde{\mathbb{A}}}
\newcommand{\qDtilde}{\widetilde{\mathbb{D}}}
\newcommand{\qEtilde}{\widetilde{\mathbb{E}}}
\newcommand{\qR}{\mathbb{R}}

\newcommand{\palgebra}[1]{\K Q_{#1}}        
\newcommand{\incidencealgebra}[1]{\K{#1}}   
\newcommand{\HasseQuiver}[1]{Q_{#1}}        

\tikzset{
	tableStyles/.style={tips=proper,scale=.5,every node/.style={scale=.8}},
	tableStyles2/.style={tips=proper,scale=.5,every node/.style={scale=.8}},
	nodeDots/.style={fill=black,draw,shape=circle,inner sep=0pt,minimum size=.3em}
}

\usepackage{array}
\usepackage{caption}
\usepackage{tabularx}

\newcommand{\posetAthreetilde}[1]{
	\begin{tikzpicture}[tableStyles,#1]
		\node (min1) at (0,0) [nodeDots] {};
		\node (min2) at (2,0) [nodeDots] {};
		\node (max1) at (1,1) [nodeDots] {};
		\node (max2) at (1,-1) [nodeDots] {};
		
		\draw[-latex] 
		{([xshift=5pt,yshift=5pt]min1.center) edge ([xshift=-5pt,yshift=-5pt]max1.center)}
		{([xshift=5pt,yshift=-5pt]min1.center) edge ([xshift=-5pt,yshift=5pt]max2.center)}
		{([xshift=-5pt,yshift=5pt]min2.center) edge ([xshift=5pt,yshift=-5pt]max1.center)}
		{([xshift=-5pt,yshift=-5pt]min2.center) edge ([xshift=5pt,yshift=5pt]max2.center)}
		;
	\end{tikzpicture}
}

\newcommand{\posetDfourtilde}[1]{
	\begin{tikzpicture}[tableStyles,#1]
		\node (center) at (0,0) [nodeDots] {};
		\node (out1) at (0,1) [nodeDots] {};
		\node (out2) at (1,0) [nodeDots] {};
		\node (out3) at (0,-1) [nodeDots] {};
		\node (out4) at (-1,0) [nodeDots] {};
		\draw[]
		{([yshift=-5pt]out1.center) edge ([yshift=5pt]center.center)}
		{([xshift=-5pt]out2.center) edge ([xshift=5pt]center.center)}
		{([yshift=5pt]out3.center) edge ([yshift=-5pt]center.center)}
		{([xshift=5pt]out4.center) edge ([xshift=-5pt]center.center)}
		;
	\end{tikzpicture}
}

\newcommand{\posetEsixtilde}[1]{
	\begin{tikzpicture}[tableStyles,#1]
		\node (left2) at (0,0) [nodeDots] {};
		\node (left1) at (1,0) [nodeDots] {};
		\node (center) at (2,0) [nodeDots] {};
		\node (above1) at (2,1) [nodeDots] {};
		\node (above2) at (2,2) [nodeDots] {};
		\node (right1) at (3,0) [nodeDots] {};
		\node (right2) at (4,0) [nodeDots] {};
		\draw 
		{([xshift=5pt]left2.center) edge ([xshift=-5pt]left1.center)}
		{([xshift=5pt]left1.center) edge ([xshift=-5pt]center.center)}
		{([xshift=5pt]center.center) edge ([xshift=-5pt]right1.center)}
		{([xshift=5pt]right1.center) edge ([xshift=-5pt]right2.center)}
		{([yshift=-5pt]above2.center) edge ([yshift=5pt]above1.center)}
		{([yshift=-5pt]above1.center) edge ([yshift=5pt]center.center)}
		;
	\end{tikzpicture}
}

\newcommand{\posetEseventilde}[1]{
	\begin{tikzpicture}[tableStyles,#1]
		\node (left3) at (-1,0) [nodeDots] {};
		\node (left2) at (0,0) [nodeDots] {};
		\node (left1) at (1,0) [nodeDots] {};
		\node (center) at (2,0) [nodeDots] {};
		\node (above1) at (2,1) [nodeDots] {};
		\node (right1) at (3,0) [nodeDots] {};
		\node (right2) at (4,0) [nodeDots] {};
		\node (right3) at (5,0) [nodeDots] {};
		\draw 
		{([xshift=5pt]left3.center) edge ([xshift=-5pt]left2.center)}
		{([xshift=5pt]left2.center) edge ([xshift=-5pt]left1.center)}
		{([xshift=5pt]left1.center) edge ([xshift=-5pt]center.center)}
		{([xshift=5pt]center.center) edge ([xshift=-5pt]right1.center)}
		{([xshift=5pt]right1.center) edge ([xshift=-5pt]right2.center)}
		{([xshift=5pt]right2.center) edge ([xshift=-5pt]right3.center)}
		{([yshift=-5pt]above1.center) edge ([yshift=5pt]center.center)}
		;
	\end{tikzpicture}
}

\newcommand{\posetEeighttilde}[1]{
	\begin{tikzpicture}[tableStyles,#1]
		\node (left1) at (1,0) [nodeDots] {};
		\node (center) at (2,0) [nodeDots] {};
		\node (above1) at (2,1) [nodeDots] {};
		\node (above2) at (2,2) [nodeDots] {};
		\node (right1) at (3,0) [nodeDots] {};
		\node (right2) at (4,0) [nodeDots] {};
		\node (right3) at (5,0) [nodeDots] {};
		\node (right4) at (6,0) [nodeDots] {};
		\node (right5) at (7,0) [nodeDots] {};
		\draw 
		
		{([xshift=5pt]left1.center) edge ([xshift=-5pt]center.center)}
		{([xshift=5pt]center.center) edge ([xshift=-5pt]right1.center)}
		{([xshift=5pt]right1.center) edge ([xshift=-5pt]right2.center)}
		{([xshift=5pt]right2.center) edge ([xshift=-5pt]right3.center)}
		{([xshift=5pt]right3.center) edge ([xshift=-5pt]right4.center)}
		{([xshift=5pt]right4.center) edge ([xshift=-5pt]right5.center)}
		{([yshift=-5pt]above2.center) edge ([yshift=5pt]above1.center)}
		{([yshift=-5pt]above1.center) edge ([yshift=5pt]center.center)}
		;
	\end{tikzpicture}
}

\newcommand{\posetRone}[1]{
	\begin{tikzpicture}[tableStyles,#1]
		\node (min) at (0,-1) [nodeDots] {};
		\node (minChildLeft) at (-1,0) [nodeDots] {};
		\node (minChildRight) at (1,0) [nodeDots] {};
		\node[nodeDots] (minGrandChild) at (0,1)  {};
		\node (left1) at (-1,1) [nodeDots] {};
		\node (right1) at (2,0) [nodeDots] {};
		\node (right2) at (3,0) [nodeDots] {};
		\node (right3) at (4,0) [nodeDots] {};
		\node (right4) at (5,0) [nodeDots] {};
		\draw[-latex]
		{([xshift=5pt,yshift=5pt]min.center) edge ([xshift=-5pt,yshift=-5pt]minChildRight.center)}
		{([xshift=-5pt,yshift=5pt]min.center) edge ([xshift=5pt,yshift=-5pt]minChildLeft.center)}
		{([xshift=5pt,yshift=5pt]minChildLeft.center) edge ([xshift=-5pt,yshift=-5pt]minGrandChild)}
		{([xshift=-5pt,yshift=5pt]minChildRight.center) edge ([xshift=5pt,yshift=-5pt]minGrandChild.center)}
		;
		\draw[] 
		{(min.center) edge [dashed] (minGrandChild.center)}
		{([xshift=5pt]left1.center) edge ([xshift=-5pt]minGrandChild.center)}
		{([xshift=5pt]minChildRight.center) edge ([xshift=-5pt]right1.center)}
		{([xshift=5pt]right1.center) edge ([xshift=-5pt]right2.center)}
		{([xshift=5pt]right2.center) edge ([xshift=-5pt]right3.center)}
		{([xshift=5pt]right3.center) edge ([xshift=-5pt]right4.center)}
		;
	\end{tikzpicture}
}

\newcommand{\posetRtwo}[1]{
	\begin{tikzpicture}[tableStyles,#1]
		\node (min) at (0,-1) [nodeDots] {};
		\node (minChildLeft) at (-1,0) [nodeDots] {};
		\node (minChildRight) at (1,0) [nodeDots] {};
		\node (minGrandChild) at (0,1) [nodeDots] {};
		\node (left1) at (-1,1) [nodeDots] {};
		\node (right1) at (2,0) [nodeDots] {};
		\node (left2) at (-2,1) [nodeDots] {};
		\node (left3) at (-3,1) [nodeDots] {};
		\node (left4) at (-4,1) [nodeDots] {};
		\draw[-latex]
		{([xshift=5pt,yshift=5pt]min.center) edge ([xshift=-5pt,yshift=-5pt]minChildRight.center)}
		{([xshift=-5pt,yshift=5pt]min.center) edge ([xshift=5pt,yshift=-5pt]minChildLeft.center)}
		{([xshift=5pt,yshift=5pt]minChildLeft.center) edge ([xshift=-5pt,yshift=-5pt]minGrandChild.center)}
		{([xshift=-5pt,yshift=5pt]minChildRight.center) edge ([xshift=5pt,yshift=-5pt]minGrandChild.center)}
		;
		\draw 
		{(min.center) edge [dashed] (minGrandChild.center)}
		{([xshift=5pt]left4.center) edge ([xshift=-5pt]left3.center)}
		{([xshift=5pt]left3.center) edge ([xshift=-5pt]left2.center)}
		{([xshift=5pt]left2.center) edge ([xshift=-5pt]left1.center)}
		{([xshift=5pt]left1.center) edge ([xshift=-5pt]minGrandChild.center)}
		{([xshift=5pt]minChildRight.center) edge ([xshift=-5pt]right1.center)}
		
		;
	\end{tikzpicture}
}

\newcommand{\posetRthree}[1]{
	\begin{tikzpicture}[tableStyles,#1]
		\node (min) at (0,-1) [nodeDots] {};
		\node (minChildLeft) at (-1,0) [nodeDots] {};
		\node (minChildRight) at (1,0) [nodeDots] {};
		\node (minGrandChild) at (0,1) [nodeDots] {};
		\node (left1) at (-2,0) [nodeDots] {};
		\node (right1) at (2,0) [nodeDots] {};
		\node (left2) at (-3,0) [nodeDots] {};
		\node (right2) at (3,0) [nodeDots] {};
		
		\draw[-latex]
		{([xshift=5pt,yshift=5pt]min.center) edge ([xshift=-5pt,yshift=-5pt]minChildRight.center)}
		{([xshift=-5pt,yshift=5pt]min.center) edge ([xshift=5pt,yshift=-5pt]minChildLeft.center)}
		{([xshift=5pt,yshift=5pt]minChildLeft.center) edge ([xshift=-5pt,yshift=-5pt]minGrandChild.center)}
		{([xshift=-5pt,yshift=5pt]minChildRight.center) edge ([xshift=5pt,yshift=-5pt]minGrandChild.center)}
		;
		\draw 
		{(min.center) edge [dashed] (minGrandChild.center)}
		{([xshift=5pt]left2.center) edge ([xshift=-5pt]left1.center)}
		{([xshift=5pt]left1.center) edge ([xshift=-5pt]minChildLeft.center)}
		{([xshift=5pt]minChildRight.center) edge ([xshift=-5pt]right1.center)}
		{([xshift=5pt]right1.center) edge ([xshift=-5pt]right2.center)}
		;
	\end{tikzpicture}
}

\newcommand{\posetRfour}[1]{
	\begin{tikzpicture}[tableStyles,#1]
		\node (min) at (0,-1) [nodeDots] {};
		\node (minChildLeft) at (-1,0) [nodeDots] {};
		\node (minChildRight) at (1,0) [nodeDots] {};
		\node (minGrandChild) at (0,1) [nodeDots] {};
		\node (left1) at (-2,0) [nodeDots] {};
		\node (right1) at (2,0) [nodeDots] {};
		\node (right2) at (3,0) [nodeDots] {};
		\node (right3) at (4,0) [nodeDots] {};
		\node (right4) at (5,0) [nodeDots] {};
		
		\draw[-latex]
		{([xshift=5pt,yshift=5pt]min.center) edge ([xshift=-5pt,yshift=-5pt]minChildRight.center)}
		{([xshift=-5pt,yshift=5pt]min.center) edge ([xshift=5pt,yshift=-5pt]minChildLeft.center)}
		{([xshift=5pt,yshift=5pt]minChildLeft.center) edge ([xshift=-5pt,yshift=-5pt]minGrandChild.center)}
		{([xshift=-5pt,yshift=5pt]minChildRight.center) edge ([xshift=5pt,yshift=-5pt]minGrandChild.center)}
		;
		\draw 
		{(min.center) edge [dashed] (minGrandChild.center)}
		{([xshift=5pt]left1.center) edge ([xshift=-5pt]minChildLeft.center)}
		{([xshift=5pt]minChildRight.center) edge ([xshift=-5pt]right1.center)}
		{([xshift=5pt]right1.center) edge ([xshift=-5pt]right2.center)}
		{([xshift=5pt]right2.center) edge ([xshift=-5pt]right3.center)}
		{([xshift=5pt]right3.center) edge ([xshift=-5pt]right4.center)}
		;
	\end{tikzpicture}
}

\newcommand{\posetRfive}[1]{
	\begin{tikzpicture}[tableStyles,#1]
		\node (min) at (0,-2) [nodeDots] {};
		\node (minChildLeft) at (-1,-1) [nodeDots] {};
		\node (minGrandChildLeft) at (-1,0) [nodeDots] {};
		\node (minChildRight) at (1,0) [nodeDots] {};
		\node (minGrandGrandChild) at (0,1) [nodeDots] {}; 
		\draw[-latex]
		{([xshift=2pt,yshift=5pt]min.center) edge ([yshift=-5pt]minChildRight.center)}
		{([xshift=-5pt,yshift=5pt]minChildRight.center) edge ([xshift=5pt,yshift=-5pt]minGrandGrandChild.center)}
		{([xshift=-5pt,yshift=5pt]min.center) edge ([xshift=5pt,yshift=-5pt]minChildLeft.center)}
		{([yshift=5pt]minChildLeft.center) edge ([yshift=-5pt]minGrandChildLeft.center)}
		{([xshift=5pt,yshift=5pt]minGrandChildLeft.center) edge ([xshift=-5pt,yshift=-5pt]minGrandGrandChild.center)}
		;
		\node (left1) at (-2,0) [nodeDots] {};
		\node (left2) at (-3,0) [nodeDots] {};
		\node (left3) at (-4,0) [nodeDots] {};
		\node (left4) at (-5,0) [nodeDots] {};
		\draw 
		{(min.center) edge [dashed] (minGrandGrandChild.center)}
		{([xshift=5pt]left4.center) edge ([xshift=-5pt]left3.center)}
		{([xshift=5pt]left3.center) edge ([xshift=-5pt]left2.center)}
		{([xshift=5pt]left2.center) edge ([xshift=-5pt]left1.center)}
		{([xshift=5pt]left1.center) edge ([xshift=-5pt]minGrandChildLeft.center)}
		;
	\end{tikzpicture}
}

\newcommand{\posetRsix}[1]{
	\begin{tikzpicture}[tableStyles,#1]
		\node (min) at (0,-2) [nodeDots] {};
		\node (minChildLeft) at (-1,-1) [nodeDots] {};
		\node (minGrandChildLeft) at (-1,0) [nodeDots] {};
		\node (minChildRight) at (1,0) [nodeDots] {};
		\node (minGrandGrandChild) at (0,1) [nodeDots] {}; 
		\draw[-latex]
		{([xshift=2pt,yshift=5pt]min.center) edge ([yshift=-5pt]minChildRight.center)}
		{([xshift=-5pt,yshift=5pt]minChildRight.center) edge ([xshift=5pt,yshift=-5pt]minGrandGrandChild.center)}
		{([xshift=-5pt,yshift=5pt]min.center) edge ([xshift=5pt,yshift=-5pt]minChildLeft.center)}
		{([yshift=5pt]minChildLeft.center) edge ([yshift=-5pt]minGrandChildLeft.center)}
		{([xshift=5pt,yshift=5pt]minGrandChildLeft.center) edge ([xshift=-5pt,yshift=-5pt]minGrandGrandChild.center)}
		;
		\node (left1) at (-2,0) [nodeDots] {};
		\node (right1) at (2,0) [nodeDots] {};
		\node (right2) at (3,0) [nodeDots] {};
		\node (right3) at (4,0) [nodeDots] {};
		
		\draw 
		{(min.center) edge [dashed] (minGrandGrandChild.center)}
		{([xshift=5pt]left1.center) edge ([xshift=-5pt]minGrandChildLeft.center)}
		{([xshift=5pt]minChildRight.center) edge ([xshift=-5pt]right1.center)}
		{([xshift=5pt]right1.center) edge ([xshift=-5pt]right2.center)}
		{([xshift=5pt]right2.center) edge ([xshift=-5pt]right3.center)}
		;
	\end{tikzpicture}
}

\newcommand{\posetRseven}[1]{
	\begin{tikzpicture}[tableStyles,#1]
		\node (min) at (0,-2) [nodeDots] {};
		\node (minChildLeft) at (-1,-1) [nodeDots] {};
		\node (minGrandChildLeft) at (-1,0) [nodeDots] {};
		\node (minGrandGrandChildLeft) at (-1,1) [nodeDots] {};
		\node (minChildRight) at (1,0) [nodeDots] {};
		\node (minGrandGrandGrandChild) at (0,2) [nodeDots] {}; 
		\draw[-latex]
		{([xshift=2pt,yshift=5pt]min.center) edge ([xshift=-2pt,yshift=-5pt]minChildRight.center)}
		{([xshift=-2pt,yshift=5pt]minChildRight.center) edge ([xshift=2pt,yshift=-5pt]minGrandGrandGrandChild.center)}
		{([xshift=-5pt,yshift=5pt]min.center) edge ([xshift=5pt,yshift=-5pt]minChildLeft.center)}
		{([yshift=5pt]minChildLeft.center) edge ([yshift=-5pt]minGrandChildLeft.center)}
		{([yshift=5pt]minGrandChildLeft.center) edge ([yshift=-5pt]minGrandGrandChildLeft.center)}
		{([xshift=5pt,yshift=5pt]minGrandGrandChildLeft.center) edge ([xshift=-5pt,yshift=-5pt]minGrandGrandGrandChild.center)}
		;
		\node (left1) at (-2,0) [nodeDots] {};
		\node (right1) at (2,0) [nodeDots] {};
		\node (right2) at (3,0) [nodeDots] {};
		
		\draw 
		{(min.center) edge [dashed] (minGrandGrandGrandChild.center)}
		{([xshift=5pt]left1.center) edge ([xshift=-5pt]minGrandChildLeft.center)}
		{([xshift=5pt]minChildRight.center) edge ([xshift=-5pt]right1.center)}
		{([xshift=5pt]right1.center) edge ([xshift=-5pt]right2.center)}
		
		;
	\end{tikzpicture}
}

\newcommand{\Paragraphtitlebf}[1]{\vspace{1em}{\bfseries \noindent #1}}

\title{Incidence algebras and beyond: brick-continuity and generic bricks}

\usepackage{orcidlink}
\usepackage[foot]{amsaddr}

\author{Edoardo Tacchetti}
\address{Edoardo Tacchetti \orcidlink{0009-0000-7446-9942}, Dipartimento di Informatica - Settore di Matematica, Universit\`a degli Studi di Verona, Strada Le Grazie 15, 37134 Verona, Italy}
\email{edoardo.tacchetti@univr.it}
\keywords{finite poset, representation-infinite poset, incidence algebra, brick-continuous algebra, generic module, generic brick}
\subjclass[2020]{16G20, 16G60, 16D80}

\begin{document}

	\begin{abstract}
		We will show that the twelve families of minimally representation-infinite incidence algebras are brick-continuous and admit a generic brick, which will be explicitly constructed. After that, reversing the reduction process between posets, we will show that any representation-infinite incidence algebra is brick-continuous and admits a generic brick, which can be explicitly computed using what will be called ``extension functors''. Lastly, a method to apply these results to a broader class of algebras is given.
	\end{abstract}
	
	\maketitle
	\vspace{-1em}
	\tableofcontents

	\section*{Introduction}		
	In the last decades, generic modules and generic bricks have caught the interest of researchers thanks to their connections with the representation-type and properties of finite-dimensional algebras. We recall that a module over a finite-dimensional algebra is called \emph{generic} if it is indecomposable with infinite length but finite endolength, that is its length as a module over its endomorphism ring. If the generic module is also a \emph{brick}, that is its endomorphism ring is a division ring, then it is called \emph{generic brick}.

Crawley-Boevey showed that representation-infinite algebras are characterised by the existence of a generic module \cite[Theorem 4.5]{crawley1991tame} and that tame algebras are equivalent to generically tame algebras \cite[Theorem 4.4]{crawley1991tame}.
It was conjectured by Mousavand and Paquette \cite[Conjecture 4.1]{mousavand2025biserial} that for any algebra, being \emph{brick-infinite}, being \emph{brick-continuous} or admitting a generic brick were three equivalent properties (where an algebra is brick-infinite if it admits an infinite family of non-isomorphic bricks, and brick-continuous if it admits an infinite family of non-isomorphic bricks with the same dimension). They proved such result holds for biserial algebras \cite[Corollary 4.5]{mousavand2025biserial}.
Then, Bautista, Pérez and Salmerón proved the equivalence of brick-continuity and existence of a generic brick for tame algebras \cite[Corollary 1.3]{bau:gen}. After that, Schlegel gave an alternative proof of such result \cite[Theorem 5.6]{schlegel2026infinitetautiltingtheory} and proved the full conjecture for algebras whose Krull-Gabriel dimension is defined \cite[Theorem 5.8]{schlegel2026infinitetautiltingtheory}.\medskip

In this paper we will discuss the existence of brick-continuous families and generic bricks for representation-infinite incidence algebras of posets. The \emph{incidence algebra} $\incidencealgebra{P}$ of a finite poset $P$ will be defined as the bound path $\mathbf{k}$-algebra $\mathbf{k}Q_P/I_P$, where $Q_P$ is the \emph{Hasse quiver} of $P$ and $I_P$ the ideal of $\mathbf{k}Q_P$ generated by the relations $\{p_1 - p_2 \}_{(p_1,p_2)}$, where we index over all pairs of paths with the same start and endpoints.

After giving the necessary preliminary notions, we will focus on the twelve classes of \emph{minimal representation-infinite incidence algebras} given by the so called \emph{critical} posets \cite{lou:ind, lou:repr}.
Since these twelve families are representation-infinite they admit a generic module by \cite[Theorem 4.5]{crawley1991tame}. Moreover, such algebras are $\tau$-tilting infinite, thanks to \cite[Theorem 2.15]{borve:tau}. Hence, by \cite[Theorem 4.14]{sent:brick}, they admit a brick that is not finitely generated. We can therefore ask whether such algebras admit a module having both properties, that is a generic brick.

We will show that each class has tame representation type and it is brick-continuous, thus they admit a generic brick (see \cite[Corollary 1.3]{bau:gen} and \cite[Theorem 5.6]{schlegel2026infinitetautiltingtheory}). This process will utilise the concept of Tits quadratic form together with its \emph{radical vectors}. We will give a method to explicitly construct the brick-continuous families and a generic brick for each of the twelve classes and thanks to the properties of APR-tilting functors, we will prove the following result:

\begin{reptheorem}{thm:genbrickcritical}
	\emph{The twelve families of minimally representation-infinite incidence $\K$-algebras given by critical posets in Table \ref{tab:Loupias frames} (considering any orientation of the undirected edges), and also the opposites of these, are brick-continuous and admit a generic brick which can be explicitly constructed.}
\end{reptheorem}

After that, we will further expand our view. Indeed, any representation-infinite poset can be reduced to one of the twelve classes (see Lemma \ref{lemma:infinitereduction}) through the reduction techniques devised by Loupias \cite{lou:ind,lou:repr}. We will show that this reduction process can be ``reversed'' in order to preserve brick-continuity and generic bricks using what we will call \emph{extension functors}. This will lead to our second main result:

\begin{reptheorem}{thm:genbrickinfinite}
	\emph{Any representation-infinite poset $P$ is brick-continuous and admits a generic brick over $\incidencealgebra{P}$ that can be explicitly computed.}
\end{reptheorem}

Lastly, using analogous ideas, we will generalise this result to a wider class of algebras:

\begin{reptheorem}{thm:generalisation}
	\emph{Let $\Lambda, \Lambda'$ be finite-dimensional algebras.
	\begin{enumerate}
		\item If $\Lambda'$ is brick-continuous and $F \colon \Modf{\Lambda'} \to \Modf{\Lambda}$ is a fully faithful exact functor, then $\Lambda$ is brick-continuous.
		\item If $\Lambda'$ has a generic brick and $F \colon \Mod{\Lambda'} \to \Mod{\Lambda}$ is a fully faithful exact functor that preserves endofinite modules, then $\Lambda$ admits a generic brick.
	\end{enumerate}
	}
\end{reptheorem}

A direct application of this result is given when $\Lambda'=\incidencealgebra{P}$ for a representation-infinite poset $P$, for example having a ring epimorphism $f\colon \Lambda \to \incidencealgebra{P}$.

	\Paragraphtitlebf{Acknowledgments.} The author is grateful to Jorge Vitória for all the help, the advices and the discussions that lead to the present article.	The author also wishes to thank Gustavo Jasso for his valuable comments and suggestions. Some of the diagrams were made using \href{https://q.uiver.app/}{q.uiver.app}.
	
	\Paragraphtitlebf{Notation and conventions.} An algebraically closed field $\K$ is fixed throughout. 
	
	We may assume that any finite dimensional $\K$-algebra is basic and connected, so of the form $\palgebra{}/I$ for some bound quiver $(Q,I)$.
	
	For a finite-dimensional $\K$-algebra $\Lambda\cong\palgebra{}/I$, $\Mod{\Lambda}$ (resp.~$\Modf{\Lambda}$) will denote the category of (finite dimensional) left $\Lambda$-modules and $\Rep{Q,I}$ (resp.~$\Repf{Q,I}$) the category of (finite-dimensional) representations of the bound quiver $(Q,I)$. We recall that in this case $\Mod{\Lambda}\cong\Rep{Q,I}$ and $\Modf{\Lambda}\cong\Repf{Q,I}$.
	
	Throughout this paper, undirected edges in a quiver indicate that the arrows may go in either direction. Composition of arrows in a quiver will be as composition of functions.
	
	Any poset will be considered to be finite and connected (i.e.~the corresponding bound quiver $(\HasseQuiver{P}, I_P)$, see Definition \ref{def:incidencealgebra}, is connected).
	
	\section{Preliminary notions}	
	
	\begin{definition}\cite{sim:vol1,sim:vol3}
	A finite dimensional algebra $\Lambda$ is said to be of \textbf{finite representation type} (or \textbf{representation-finite}) if the category $\Modf{\Lambda}$ has a finite number of isomorphism classes of indecomposable modules. Otherwise it is said to be of \textbf{infinite representation type} (or \textbf{representation-infinite}).\smallskip
	
	Moreover $\Lambda$ is said to be:
	\begin{itemize}
		\item of \textbf{tame representation type} (shortly \textbf{tame}) if there is a classification of the isomorphism classes of the indecomposable modules in $\Modf{\Lambda}$ in the sense
		that, for each integer $d \geq 1$, the indecomposable modules in $\Modf{\Lambda}$ of dimension $d$ form at most finitely many one-parameter families.
		\item of \textbf{wild representation type} (shortly \textbf{wild}) if, for any finite dimensional $\K$-algebra $A$, there exists an embedding functor $T\colon \Modf{A}\to \Modf{\Lambda}$.
	\end{itemize}
\end{definition}


\subsection{Tame concealed algebras}
We will work with tame concealed algebras. We recall their definition and some useful properties.

\begin{definition}[{c.f.~\cite[Ch.VIII 3.1, 4.6]{sim:vol1}}, {\cite[Ch.XI.3]{sim:vol2}}]
	Let $Q$ be an acyclic quiver. A finite-dimensional $\K$-algebra $\Lambda$ is said to be \textbf{tilted of type $Q$} if it is isomorphic to $\End_{\K Q}(T)$ for some tilting $\K Q$-module $T$. If $T$ is also postprojective, $\Lambda$ is said to be \textbf{concealed of type $Q$}.
	
	We say that a concealed $\K$-algebra of type $Q$ is \textbf{tame concealed} (equiv.~\textbf{concealed of Euclidean Type}) if $Q$ is a tame and representation-infinite quiver (i.e.~an Euclidean quiver).
\end{definition}

We have the following result, justifying the name ``tame concealed'':
\begin{theorem}\label{thm:tameconcealed}
	A tame concealed algebra is representation-infinite, tame and $\tau$-tilting infinite.
\end{theorem}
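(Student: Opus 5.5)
We prove the three assertions separately, transferring each from the path algebra of a Euclidean quiver along the tilting equivalences that define a tame concealed algebra. Throughout, let $\Lambda=\End_{\K Q}(T)$ with $Q$ Euclidean and $T$ a postprojective tilting $\K Q$-module.

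\textbf{Representation-infinite.} By the Brenner--Butler tilting theorem, $\Hom_{\K Q}(T,-)$ restricts to an equivalence $\mathcal{T}(T)\to\mathcal{Y}(T)$ between the torsion class of $T$ in $\Modf{\K Q}$ and a torsion-free class in $\Modf{\Lambda}$. This equivalence sends indecomposables to indecomposables and distinguishes isomorphism classes. Since $T$ is postprojective, it has only finitely many indecomposable summands, all lying in finitely many $\tau$-orbits of the postprojective component. Hence $\Hom_{\K Q}(R,\tau T)=0$ for every regular $R$, and by the Auslander--Reiten formula $\Ext^1_{\K Q}(T,R)\cong D\Hom_{\K Q}(R,\tau T)=0$. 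So all regular modules lie in $\mathcal{T}(T)$. The Euclidean quiver $Q$ has infinitely many pairwise non-isomorphic indecomposable regular modules, for instance the homogeneous tubes indexed by $\mathbb{P}^1$ minus finitely many points. Their images are pairwise non-isomorphic indecomposable $\Lambda$-modules, so $\Lambda$ is representation-infinite.

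\textbf{Tame.} We cite the standard description of $\Modf{\Lambda}$ as having a preprojective component, a separating $\mathbb{P}^1(\K)$-family of stable tubes, and a preinjective component (\cite[Ch.~XI.3]{sim:vol2} and Ringel). Finitely many indecomposables lie outside the tubes in each dimension. Those inside the tubes come, in each dimension, from finitely many one-parameter families indexed by the homogeneous tubes. Thus $\Lambda$ is tame in the sense of the definition above.

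\textbf{$\tau$-tilting infinite.} The homogeneous tubes supply infinitely many pairwise non-isomorphic bricks of a fixed dimension vector, namely the quasi-simple modules at the mouths, whose dimension vector is the radical vector $h$ of the Tits form. So $\Lambda$ is brick-infinite, and by Demonet--Iyama--Jasso's brick--$\tau$-rigid correspondence, a $\tau$-tilting finite algebra has only finitely many bricks; hence $\Lambda$ is $\tau$-tilting infinite. Alternatively, infinitely many indecomposable preprojective $\Lambda$-modules are $\tau$-rigid, since preprojective components are directing, and each is a summand of a support $\tau$-tilting module. That also forces infinitely many support $\tau$-tilting modules.

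The only step needing care is the tameness statement, where we rely on the cited structure theorem rather than reproving it. The $\tau$-tilting infinite claim can also be cited directly from \cite[Theorem 2.15]{borve:tau}.
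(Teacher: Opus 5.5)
Your proof is correct, but it follows a different route from the paper. The paper proves nothing at this point and only cites: representation-infiniteness from the Happel--Vossieck/Ringel classification \cite{hap:min,ringel2006tame}, tameness from \cite[Ch.XIX, Theorem 3.14]{sim:vol3}, and $\tau$-tilting infiniteness from \cite[Lemma 2.6]{borve:tau}. You instead derive representation-infiniteness from the Brenner--Butler theorem, by showing that every regular $\K Q$-module lies in $\mathcal{T}(T)$. You obtain $\tau$-tilting infiniteness from brick-infiniteness via Demonet--Iyama--Jasso, or alternatively from the infinitely many directing, hence $\tau$-rigid, preprojectives together with Bongartz completion. Only tameness still rests on a cited structure theorem, exactly as in the paper.

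One justification needs repair. $\Hom_{\K Q}(R,\tau T)=0$ does not follow from $T$ having finitely many summands in finitely many $\tau$-orbits. It follows from two facts: $\tau T$ is postprojective (or zero), and over a hereditary algebra there are no nonzero maps from a regular module to a postprojective one. For the second fact, note that a nonzero map from an indecomposable non-projective module into a projective would have projective image and hence split.

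Your route buys self-containedness, and also a link to the rest of the paper. Your $\tau$-tilting argument exhibits the mouths of the homogeneous tubes as infinitely many pairwise non-isomorphic bricks of dimension vector $h$, the radical vector. That already proves every tame concealed algebra is brick-continuous. Combined with Lemma~\ref{lemma:postameconc} and \cite[Corollary 1.3]{bau:gen}, this abstractly yields the existence part of Theorem~\ref{thm:genbrickcritical}. The paper's case-by-case computation with $y_C$ is what makes those bricks, and the generic brick, explicit.
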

\begin{proof}
	It follows directly from the classification of tame concealed algebras that they are representation-infinite (see \cite[Theorem 1]{hap:min}, \cite[Section 4.3, Proposition 7]{ringel2006tame}). They are also tame by \cite[Ch.XIX, Theorem 3.14]{sim:vol3}, and $\tau$-tilting infinite (see \cite[Lemma 2.6]{borve:tau}).
\end{proof}


\subsection{Tits form}
We will need the notion of the Tits form of a bound quiver. We present here some definitions and results that will help us in the following.

\begin{definition}[{c.f.~\cite[Section 1.0]{ringel2006tame}}]
	An \textbf{integral quadratic form} is a polynomial $q=q(x_1,\dots,x_n)$ in $n$ variables with integral coefficients $q_{ij}\in \Z$ of the form
	\[q=q(x_1,\dots,x_n)=\sum_{i=1}^{n} x_i^2 + \sum_{i<j} q_{ij} x_i x_j\]
	We obtain a function $q\colon\Z^n \to \Z$ evaluating the polynomial $q$ at $n$-tuples of integral numbers. 
	
	We can endow $\Z^n$ with a partial order defined component-wise. Given $z = (z_1,\dots,z_n) \in \Z^n$, it is said to be:
	\begin{itemize}
		\item \textbf{positive}, denoted by $z > 0$, if $z\neq 0$ and $z_i\geq0$ for all $i$;
		\item \textbf{sincere} if $z_i\neq 0$ for all $i$.
	\end{itemize}
	
	An integral quadratic form $q$ in $n$ variables is said to be \textbf{positive semi-definite} if $q(z)\geq 0$ for all $z\in\Z^n$.
	
	For a positive semi-definite quadratic form $q$, we can define the \textbf{radical of $q$} as 
	\[\Rad(q)\defeq \{z\in \Z^n\sth q(z) = 0\}\]
	It is a subgroup of $\Z^n$ and its rank is called the \textbf{radical rank of $q$}.
	The elements $z\in\Rad(q)$ are called \textbf{radical vectors}.
	
	In case $0$ is the only radical	vector of $q$ or, equivalently, the radical rank of $q$ is $0$, the form $q$ is said to be \textbf{positive definite}.
\end{definition}

\begin{definition}[{c.f.~\cite[Section 1.1]{ringel2006tame}}]\label{def:quadmatrix}
	Given $q=q(x_1,\dots,x_n)=\sum_{i=1}^{n} x_i^2 + \sum_{i<j} q_{ij} x_i x_j$ an integral quadratic form in $n$ variables, we define the corresponding symmetric bilinear form $\beta\colon \Z^n \times \Z^n \to \Z$ as $\beta(x,y) \defeq x^t B y$ with $B$ the symmetric $n\times n$-matrix $B=\frac{1}{2}(q_{ij})_{1 \leq i,j \leq n}$, with $q_{ij}=q_{ji}$ for $i > j$, and $q_{ii} = 2$ for all $i$.	
\end{definition} 

\begin{lemma}\label{lem:radpossemi}
	Let $q$ be an integral quadratic form, $\beta$ the corresponding symmetric bilinear form with matrix $B$. If $q$ is positive semi-definite then we have that
	\[\Rad(q)=\ker(B)\]
\end{lemma}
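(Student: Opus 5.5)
We need to show two inclusions. Observe first that $q(z) = \beta(z,z) = z^t B z$ for every $z\in\Z^n$: the diagonal entries $B_{ii}=1$ give the terms $x_i^2$, and each pair of off-diagonal entries $B_{ij}=B_{ji}=\frac{1}{2}q_{ij}$ contributes $q_{ij}x_ix_j$. Here $\ker(B)$ is understood as $\{z\in\Z^n \sth Bz=0\}$, since $B$ has half-integer entries but we only consider integral vectors.

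The inclusion $\ker(B)\subseteq \Rad(q)$ is immediate. If $Bz=0$, then $q(z)=z^tBz=0$.

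For the converse, I will use positive semi-definiteness through a polarization argument. First I extend the semi-definiteness from $\Z^n$ to $\mathbb{Q}^n$ by homogeneity. For $w\in\mathbb{Q}^n$, choose $m\in\Z_{>0}$ with $mw\in\Z^n$; then $w^tBw = m^{-2}q(mw)\geq 0$.

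Now let $z\in\Rad(q)$ and fix any $y\in\mathbb{Q}^n$. For every $t\in\mathbb{Q}$, symmetry of $B$ gives
\[0\leq (z+ty)^tB(z+ty) = q(z) + 2t\,\beta(z,y) + t^2\,y^tBy = 2t\,\beta(z,y)+t^2\,y^tBy.\]
Suppose $\beta(z,y)\neq 0$. Choosing $t$ rational, of sign opposite to $\beta(z,y)$ and of sufficiently small absolute value, makes the linear term dominate the quadratic one, so the right-hand side becomes negative. This is a contradiction, so $\beta(z,y)=y^tBz=0$ for all $y\in\mathbb{Q}^n$. Taking $y$ to range over the standard basis vectors yields $Bz=0$, hence $z\in\ker(B)$.

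The only delicate point is that semi-definiteness is assumed only on integral vectors, while the perturbation argument needs small values of $t$. The homogeneity step, passing to $\mathbb{Q}^n$ and restricting to rational $t$, resolves this. Alternatively, one can avoid rationals altogether by taking $t=\pm 1/k$ and multiplying the inequality through by $k^2$, so that everything stays in $\Z^n$.
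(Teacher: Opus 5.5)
Your proof is correct, but it proves the inclusion $\Rad(q)\subseteq\ker(B)$ by a different route from the paper.

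The paper asserts that $B$ is a positive semi-definite real matrix and invokes the Cholesky factorisation $B=L^tL$. From $0=q(z)=\lVert Lz\rVert^2$ it gets $Lz=0$, hence $Bz=L^tLz=0$. You instead use a first-order perturbation (polarisation) argument: the expansion $q(z+ty)=2t\,\beta(z,y)+t^2\,y^tBy\geq 0$, for small rational $t$ of either sign, forces $\beta(z,y)=0$. This is essentially the Cauchy--Schwarz inequality $\beta(z,y)^2\leq q(z)\,q(y)$ for semi-definite forms.

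Your route is more elementary and self-contained, since it needs no matrix factorisation theorem. It also handles explicitly a point the paper passes over: semi-definiteness is only assumed on $\Z^n$, whereas nonnegative eigenvalues require $x^tBx\geq 0$ on all of $\mathbb{R}^n$. That extension needs homogeneity to $\mathbb{Q}^n$ plus a continuity argument, which the paper leaves implicit. Your integral variant, $q(kz\pm e_i)=\pm 2k\,(Bz)_i+1\geq 0$ for all $k\geq 1$, never leaves $\Z^n$ at all.

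The paper's route has a structural advantage in return. It writes $q$ as a sum of squares $\lVert Lx\rVert^2$ of real linear forms, so the radical is visibly the set of integral vectors in $\ker L=\ker B$.
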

\begin{proof}
	($\supseteq$) If $z\in\Z^n$ is in $\ker(B)$ (i.e.~$Bz=0$) then $q(z)=\beta(z,z)=z^t B z = 0$. Thus $z$ is in the radical $\Rad(q)$ of $q$.
	
	($\subseteq$) Let $z$ be in the radical $\Rad(q)$ of $q$, so $q(z)=0$. Since $q$ is positive semi-definite, we have that $\beta$ is a positive semi-definite bilinear form, i.e.~$B$ is a positive semi-definite matrix (all its eigenvalues are $\geq0$). By Cholesky factorisation (see \cite[Corollary 7.2.9]{horn2012matrix}) we can write $B= L^t L$ with $L$ a real lower triangular matrix with non-negative diagonal entries. Therefore
	\[
	0=q(z)=\beta(z,z)=z^t B z= z^t L^t Lz=(Lz)^t Lz = \lVert Lz\rVert ^2
	\]
	with $\lVert\mhyphen\rVert$ being the (real) Euclidean norm ($\lVert x\rVert^2 =x^t x= \sum_{i=1}^{n} x_i^2$). Then $\lVert Lz\rVert ^2=0$, which implies that $Lz=0$. Thus $Bz=L^tLz=0$, so $z\in\ker(B)$.
\end{proof}

\begin{definition}[{c.f.~\cite[Section 1.0]{ringel2006tame}}]
	A quadratic form $q$ in $n$ variables is said to be \textbf{weakly positive} if $q(z) > 0$ for all positive $z\in\Z^n$.
	
	For any $1\leq t\leq n$, we denote by $q^t= q(z_1,\dots, z_{t-1},0,z_{t+1},\dots,z_n)$ the restriction of $q$ to the hyper-plane defined by $z_t=0$.
	
	An integral quadratic form $q$ in $n\geq 3$ variables is said to be \textbf{critical} if $q$ is not weakly positive but all the
	forms $q^t$, $1\leq t\leq n$, are weakly positive.
\end{definition}

We have the following result due to Ovsienko:
\begin{theorem}[{c.f.~\cite[Section 1.0, Theorem 2]{ringel2006tame}}]\label{thm:criticalform}
	A critical quadratic form is positive semi-definite	with radical rank $1$, and with a sincere positive radical vector.
\end{theorem}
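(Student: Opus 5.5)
The plan is to produce the radical vector first, as a minimal positive vector on which $q$ fails to be positive, and then get positive semi-definiteness and radical rank $1$ from one rescaling trick that uses the weak positivity of the restrictions $q^t$. Throughout, $\beta$ and $B$ are as in Definition \ref{def:quadmatrix}, so $q(x)=\beta(x,x)$ and $q(e_i)=1$ for the standard basis vectors $e_i$.

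\emph{Step 1: a sincere positive radical vector.} Since $q$ is not weakly positive, the set of positive $z\in\Z^n$ with $q(z)\le 0$ is non-empty. Pick $z$ in it that is minimal for the componentwise order.
\begin{itemize}
\item $z$ is sincere: if $z_t=0$ for some $t$, then $q^t(z)=q(z)\le 0$, contradicting the weak positivity of $q^t$.
\item For each $i$, the vector $z-e_i$ is still $\ge 0$. It is non-zero because $n\ge 3$ and $z$ is sincere. By minimality, $q(z-e_i)\ge 1$.
\item Expanding $q(z-e_i)=q(z)-2\beta(z,e_i)+1$ therefore gives $2\beta(z,e_i)\le q(z)$ for every $i$.
\item Multiplying by $z_i>0$ and summing, $2q(z)=2\sum_i z_i\beta(z,e_i)\le q(z)\sum_i z_i$.
\item If $q(z)<0$, dividing by $q(z)$ gives $\sum_i z_i\le 2$. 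This is impossible, since $z$ is sincere with $n\ge 3$ entries, so $\sum_i z_i\ge 3$. Hence $q(z)=0$.
\item Now every $\beta(z,e_i)\le 0$, while $\sum_i z_i\beta(z,e_i)=q(z)=0$ with all $z_i>0$. This forces $\beta(z,e_i)=0$ for all $i$, i.e.\ $Bz=0$.
\end{itemize}

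\emph{Step 2: positive semi-definiteness.} Let $x\in\Z^n$ be arbitrary.
\begin{itemize}
\item Choose an index $t$ minimising the rational number $x_i/z_i$.
\item Set $y\defeq z_t x - x_t z\in\Z^n$. Then $y_t=0$, and for every $i$ we have $y_i=z_tz_i\bigl(x_i/z_i-x_t/z_t\bigr)\ge 0$.
\item Since $Bz=0$, the cross terms vanish and $q(y)=z_t^2\,q(x)$.
\item If $q(x)<0$, then $y\ne 0$, so $y$ is a positive vector with $y_t=0$ and $q^t(y)=q(y)<0$. This contradicts the weak positivity of $q^t$. Hence $q(x)\ge 0$ for all $x$.
\end{itemize}

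\emph{Step 3: radical rank $1$.} Suppose $q(x)=0$ and run the same construction.
\begin{itemize}
\item We get $y\ge 0$ with $y_t=0$ and $q^t(y)=z_t^2\,q(x)=0$.
\item Weak positivity of $q^t$ then forces $y=0$, i.e.\ $z_t x=x_t z$.
\item So every radical vector is a rational multiple of $z$, and $\Rad(q)$ has rank $1$. (Alternatively, by Lemma \ref{lem:radpossemi}, $\Rad(q)=\ker(B)$, and the same argument shows this kernel is one-dimensional.)
\end{itemize}

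\emph{Main obstacle.} The delicate part is Step 1: extracting $q(z)=0$ from minimality. This needs the inequality $q(z-e_i)\ge 1$, which requires $z-e_i\neq 0$; that is exactly where sincerity and the hypothesis $n\ge 3$ enter. It also needs the counting bound $\sum_i z_i\ge 3$. Once $Bz=0$ is established, the rescaling trick in Steps 2 and 3 is routine.
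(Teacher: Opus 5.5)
Your proof is correct and complete. The paper gives no argument of its own here; it imports the statement as Ovsienko's theorem via \cite{ringel2006tame}. So what you supply is an independent, self-contained proof, along the lines of the classical minimal-positive-vector argument.

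Each step checks out:
\begin{itemize}
\item A componentwise-minimal positive $z$ with $q(z)\le 0$ exists by well-foundedness.
\item Sincerity follows from weak positivity of the $q^t$, read as forms in the remaining $n-1$ variables.
\item Minimality gives $2\beta(z,e_i)\le q(z)$. The weighted sum then forces $q(z)=0$, and hence $Bz=0$.
\item The rescaling $y=z_tx-x_tz$ yields both semi-definiteness and the implication $q(x)=0\Rightarrow x\in\mathbb{Q}z$.
\end{itemize}

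Compared with the bare citation, your route buys three things.
\begin{enumerate}
\item It isolates exactly where $n\ge 3$ enters: only in $\sum_i z_i\ge 3$, since $z-e_i\neq 0$ needs just $n\ge 2$. The hypothesis is genuinely needed: for $n=2$ the form $x_1^2+x_2^2-3x_1x_2$ meets every other condition of criticality but is not positive semi-definite.
\item It gets $Bz=0$ directly, so Lemma \ref{lem:radpossemi} and its Cholesky argument are not needed for this statement.
\item With one more line it identifies the vector the paper later calls $y_C$. Step~3 gives $\Rad(q)=\mathbb{Q}z\cap\Z^n$. Moreover $z/\gcd(z_1,\dots,z_n)$ is again positive with $q=0$, so minimality forces the gcd to be $1$. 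Hence $\Rad(q)=\Z z$, and your minimal $z$ is precisely the minimal sincere positive radical vector used in Section~\ref{section:brickcritical}.
\end{enumerate}
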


Now we define some integral quadratic forms over algebras and quivers.

\begin{definition}[{c.f.~\cite[Section 2.2]{bongartz83quad}}]
	Let $\Lambda$ be a finite dimensional $\K$-algebra with finite global dimension ($\gldim{\Lambda}\leq m$), $\Lambda\cong \palgebra{}/I$ for some finite quiver $Q$ (with $n= \card{Q_0}$) and admissible ideal $I$. We define the \textbf{Euler characteristic} of $\Lambda$ as the bilinear form 
	\[
	\langle \dimvec{M},\dimvec{M'}\rangle \defeq \sum_{i\geq0} (-1)^i \dim_\K \Ext_{\Lambda}^i(M,M')  
	\]
	where $M$, $M'$ are finitely generated $\Lambda$-modules. It is well defined since $\gldim{\Lambda}\leq m$, so $\Ext_{\Lambda}^i(M,M')=0$ for $i\geq m+1$. We define the \textbf{Euler quadratic form} of $\Lambda$ as
	\[\chi_\Lambda(\dimvec{M})\defeq \langle \dimvec{M},\dimvec{M}\rangle\]
\end{definition}

\begin{definition}[{c.f.~\cite[Definition 2.1]{bongartz83quad}}]
	Let $(Q,I)$ be an arbitrary bound quiver with $n=\card{Q_0}$ and $Q$ acyclic. The \textbf{Tits quadratic form} of $\Lambda=\palgebra{}/I$ is the integral quadratic form $q_\Lambda \colon \Z^{Q_0} \to \Z$ defined by the formula:
	\[
	\begin{aligned}
		q_\Lambda(x)\defeq& \sum_{i\in Q_0} x_i^2 - \sum_{\alpha\in Q_1} x_{s(\alpha)} x_{t(\alpha)} + \sum_{i,j \in Q_0} r_{ij} x_i x_j \\
		=& \sum_{i\in Q_0} x_i^2 - \sum_{i,j\in Q_0} d_{ij} x_i x_j + \sum_{i,j \in Q_0} r_{ij} x_i x_j
	\end{aligned}
	\]
	where $d_{ij}$ is the number of arrows from $i$ to $j$ and $r_{ij} = \card{R\cap \varepsilon_j \palgebra{} \varepsilon_i}$, with $R$ a minimal set of relations which generates $I$ and $\varepsilon_j \palgebra{} \varepsilon_i$ the vector space spanned by all the paths starting at $i$ and ending in $j$ ($\varepsilon_\ell$ denotes the lazy path at vertex $\ell$) .
\end{definition}

These two integral quadratic forms may be different in general, but they coincide with the following hypothesis:

\begin{theorem}[{c.f.~\cite[Proposition 2.2]{bongartz83quad}}]\label{thm:eulertits}
	Let $\Lambda =\palgebra{}/I$ be a basic finite-dimensional $\K$-algebra  with $\gldim{\Lambda}\leq 2$ and $Q$ acyclic.  Then $\chi_\Lambda$ and $q_\Lambda$ coincide.
\end{theorem}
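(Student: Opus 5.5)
The plan is to use bilinearity: both forms are quadratic forms attached to bilinear forms on $\Z^{Q_0}$. It is therefore enough to show that the Euler bilinear form, evaluated on pairs of simple modules, gives exactly the coefficients appearing in $q_\Lambda$.

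\textbf{Step 1: reduce to simples.} Since $\gldim{\Lambda}\leq 2<\infty$, each $\Ext^i_\Lambda(-,-)$ is additive along short exact sequences through the long exact sequences, and the alternating sum is finite. Hence $\langle -,-\rangle$ depends only on dimension vectors and is bilinear on $K_0(\Modf{\Lambda})\cong\Z^{Q_0}$. This group has basis $e_i=\dimvec{S_i}$, where $S_i$ is the simple module at vertex $i$. Writing $x=\sum_i x_ie_i$, we get
\[
\chi_\Lambda(x)=\sum_{i,j\in Q_0}x_ix_j\bigl(\dim_\K\Hom(S_i,S_j)-\dim_\K\Ext^1(S_i,S_j)+\dim_\K\Ext^2(S_i,S_j)\bigr),
\]
because all higher $\Ext$ groups vanish.

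\textbf{Step 2: compute the three terms.} Since $\K$ is algebraically closed and the $S_i$ are pairwise non-isomorphic, $\dim_\K\Hom(S_i,S_j)=\delta_{ij}$, which gives the diagonal term $\sum_i x_i^2$.

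For the higher terms, take a minimal projective resolution $\cdots\to P^{(2)}\to P^{(1)}\to P_i\to S_i\to 0$. By minimality, $\dim_\K\Ext^n(S_i,S_j)$ equals the multiplicity of $P_j$ as a direct summand of $P^{(n)}$, that is, the multiplicity of $S_j$ in the top of $\Omega^nS_i$.

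\begin{itemize}
\item For $n=1$: $\Omega S_i=\Rad P_i$. Because $I$ is admissible, its top is spanned by the arrows starting at $i$, so $\dim_\K\Ext^1(S_i,S_j)=d_{ij}$. Acyclicity ensures there are no loops, so no diagonal correction arises.
\item For $n=2$: I will identify $\Omega^2S_i$ with $(I\varepsilon_i)$ modulo the part lying in $J I\varepsilon_i+IJ\varepsilon_i$, where $J$ is the arrow ideal. Concretely, $P^{(1)}=\bigoplus_{\alpha\colon i\to j}\Lambda\varepsilon_j$, and the kernel of $P^{(1)}\to P_i$ is given by linear combinations $\sum\rho_\alpha\alpha$ of paths that lie in $I$. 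The top of this kernel is $\varepsilon_j\bigl(I/(JI+IJ)\bigr)\varepsilon_i$, whose dimension is the number $r_{ij}$ of elements of a minimal generating set $R$ of $I$ that lie in $\varepsilon_j\palgebra{}\varepsilon_i$.
\end{itemize}

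This gives $\dim_\K\Ext^2(S_i,S_j)=r_{ij}$. Substituting into the formula from Step 1 reproduces $q_\Lambda$ term by term.

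\textbf{Main obstacle.} I expect the $\Ext^2$ identification to be the delicate point. One must check carefully that the second syzygy's top is $I/(JI+IJ)$ localised at the vertices $i$ and $j$, with the correct source/target convention: composition is written as for functions, so $\varepsilon_j\palgebra{}\varepsilon_i$ consists of paths from $i$ to $j$. One must also check that minimality of the resolution matches minimality of $R$. I would first prove the general statement for any bound quiver with admissible ideal, namely $\dim\Ext^2(S_i,S_j)=\dim\varepsilon_j(I/(IJ+JI))\varepsilon_i$, by comparing the first two terms of the minimal resolution with the presentation $\palgebra{}\to\Lambda$. I would then specialise to conclude the theorem.
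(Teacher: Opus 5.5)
Your proposal is correct. It is essentially Bongartz's argument, which the paper only cites for this statement without giving a proof of its own. Bilinearity of the Euler form reduces everything to simples; then $\dim_\K\Ext^1(S_i,S_j)=d_{ij}$ and $\dim_\K\Ext^2(S_i,S_j)=\dim_\K\varepsilon_j\bigl(I/(IJ+JI)\bigr)\varepsilon_i=r_{ij}$, the last equality by a Nakayama argument using that $J$ is nilpotent in the finite-dimensional algebra $\K Q$.

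One small slip to fix. Since $P^{(1)}\cong J\varepsilon_i/IJ\varepsilon_i$, you get $\Omega^2S_i\cong I\varepsilon_i/IJ\varepsilon_i$. It is the top of this module, not $\Omega^2S_i$ itself, that equals $I\varepsilon_i/(JI+IJ)\varepsilon_i$, and that is what your next sentence actually uses.
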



\subsection{Incidence algebras}

We will now present some notions related to the representation theory of a finite poset $P$.

\begin{definition}[{c.f.~\cite[Definition 1.1]{borve:tau}}]\label{def:incidencealgebra}
	Let $P$ be a finite poset. Its \textbf{Hasse quiver} $\HasseQuiver{P}$ is defined as the quiver with the elements in $P$ as vertices and arrows $x\rightarrow y$ if $x\leq y$ in $P$ and no element lies strictly between $x$ and $y$.
	
	We define the \textbf{incidence algebra} $\incidencealgebra{P}$ of a finite poset $P$ as the bound path algebra $\palgebra{P} / I_P$, where the ideal $I_P$ of the path $\K$-algebra $\palgebra{P}$ is generated by the relations $\{p_1-p_2\}_{p_1,p_2}$, indexing over all pairs of paths with the same start and endpoints.
\end{definition}

 In the following, (finite-dimensional) left $\incidencealgebra{P}$-modules will also be considered as (finite-dimensional) representations of the bound quiver $(\HasseQuiver{P}, I_P)$, namely $\Mod{\incidencealgebra{P}}\cong \Rep{\HasseQuiver{P}, I_P}$ (resp.~$\Modf{\incidencealgebra{P}}\cong \Repf{\HasseQuiver{P}, I_P}$).

\begin{remark}
	We would like to bring to the reader's attention that we will be working with the classical definition of representations of the bound quiver $(\HasseQuiver{P}, I_P)$.
	
	Other authors used the term \emph{representation} of a poset $P$ (or also \emph{$P$-space}) to denote a $\K$-vector space $V$ with an order-preserving map from $P$ into the poset of subspaces of $V$ (see for example \cite{nazarovaroiter:representations, gabriel:representations}). The maps in such representations of the bound quiver $(\HasseQuiver{P},I_P)$ are all injective. This property is not required in the following.
\end{remark}

\begin{definition}
	A finite poset $P$ is said to be \textbf{representation-infinite} if the incidence algebra $\incidencealgebra{P}$ is representation-infinite. The definition of a representation-finite, tame or wild poset is analogous.
\end{definition}

\subsubsection{Critical posets and reduction techniques}
 
Loupias \cite{lou:ind,lou:repr} devised two reduction techniques of posets that proved to be very useful to study the representation theory of incidence algebras.
\begin{definition}[{\cite{lou:ind}, c.f.~\cite[Section 2.3]{borve:tau}}]\label{def:reduction}
	Let $P$ be a finite poset.
	\begin{itemize}
		\itemsep0em
		
		\item A subposet $P'$ of $P$ (with the induced order and the canonical order-preserving map $s\colon P'\to P$) is also called a \textbf{subtraction} of $P$.
		
		\item A poset $P'$ is a \textbf{contraction} (or a \textbf{contracted poset}) of $P$ if there exists a surjective order-preserving morphism $c\colon P\rightarrow P'$ (also called \textbf{contraction}) with connected fibers (i.e.~$c^{-1}(x)$ is connected for every $x\in P'$).
		
		\noindent If there exists exactly one element $x\in P'$ such that $\vert c^{-1}(x)\vert=2$ and $|c^{-1}(y)|=1$ for all $y\in P'\setminus \{x\}$, the contraction $c\colon P\to P'$ is called \textbf{elementary contraction}.
		
		\item A poset $P$ \textbf{can be reduced to $P'$} if $P'$ can be obtained from $P$ using a finite number of subtractions and/or contractions.
	\end{itemize}
\end{definition}
Informally, an elementary contraction identifies two adjacent elements in $P$. Therefore, any arrow in the Hasse quiver $P$ corresponds to an elementary contraction. It is possible to check that every contraction can be written as a composite of elementary contractions (c.f.~\cite[Lemma 2.29]{tacchetti2025masterthesis}).

If a poset $P$ can be reduced to $P'$, we may assume that each step of the reduction process either removes a single vertex or contracts an arrow in the Hasse quiver.

\begin{definition}[{\cite[Section 1]{lou:ind}}]\label{def:criticalposet}
	A poset $P$ is said to be \textbf{critical} (or $\incidencealgebra{P}$ is said to be \textbf{minimally representation-infinite}) if it is representation-infinite and all of its	proper subposets and non-trivial contractions have representation-finite incidence $\K$-algebra.
\end{definition}

\begin{lemma}\label{lemma:infinitereduction}
	Any representation-infinite poset can be reduced to a critical poset.
\end{lemma}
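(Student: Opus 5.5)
We argue by a minimality (well-founded descent) argument on the size of the poset. Let $P$ be a representation-infinite poset. To each poset $P'$ to which $P$ can be reduced we attach the pair $(|P'|, |\text{Hasse arrows of } P'|)$. Any proper subtraction strictly decreases $|P'|$. Any non-trivial contraction $c\colon P'\to P''$ is surjective with at least one fibre of size at least two, so it also strictly decreases the number of elements. Hence every non-trivial reduction step lowers the natural number $|P'|$, and any chain of non-trivial reduction steps starting from $P$ has length at most $|P|$.

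Now consider the set $\mathcal{S}$ of all posets $P'$ (up to isomorphism) such that $P$ can be reduced to $P'$ and $\incidencealgebra{P'}$ is representation-infinite. This set is nonempty, since $P\in\mathcal{S}$ via the empty reduction. Choose $P_0\in\mathcal{S}$ with $|P_0|$ minimal. We claim $P_0$ is critical in the sense of Definition \ref{def:criticalposet}.

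It is representation-infinite by construction. Let $P_1$ be a proper subposet of $P_0$, or a non-trivial contraction of $P_0$. Then $P$ can be reduced to $P_1$, by composing the reduction $P\leadsto P_0$ with one more subtraction or contraction; reducibility is transitive by the very definition as a finite sequence of steps. Also $|P_1|<|P_0|$. By minimality, $P_1\notin\mathcal{S}$, so $\incidencealgebra{P_1}$ is representation-finite. Thus every proper subposet and every non-trivial contraction of $P_0$ is representation-finite, and $P_0$ is critical.

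One point needs care: the convention that posets are connected. If a subtraction produces a disconnected subposet, we replace it by a connected component. The incidence algebra of a disconnected poset is the product of the algebras of its components, so it is representation-infinite if and only if some component is. A component is itself a subtraction of $P_0$, so the minimality argument runs unchanged inside the class of connected posets. Contractions of connected posets are automatically connected, being surjective images under order-preserving maps.

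The main (mild) obstacle is the counting step. We must make sure that the minimal element is taken over all reductions, not merely over single steps. We must also make sure that "non-trivial contraction" really means one that is not an isomorphism, so that it strictly lowers cardinality. Both follow directly from Definition \ref{def:reduction}.
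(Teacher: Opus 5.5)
Your proof is correct and is essentially the paper's argument. The paper uses strong induction on the cardinality of $P$: either $P$ is already critical, or it has a representation-infinite proper subposet or non-trivial contraction of strictly smaller size, to which induction applies. You package the same descent as choosing a representation-infinite poset of minimal cardinality among those reachable from $P$, and your remark about passing to connected components makes explicit a point the paper leaves implicit.
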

\begin{proof}
	Let $P$ be a representation-infinite poset. If all of its proper subposets and non-trivial contractions have representation-finite incidence $\K$-algebras, it is critical by Definition \ref{def:criticalposet}.
	Otherwise, $P$ will have at least one proper subposet or non-trivial contraction $P'$ that is representation-infinite. The cardinality of $P'$ is strictly less than the cardinality of $P$. Therefore by induction on the cardinality of the poset, $P'$ can be reduced to a critical poset, thus the same holds for $P$, since we can consider the reduction techniques used from $P$ to $P'$ and from $P'$ to a critical poset in order to reduce $P$ to a critical poset.
\end{proof}

\captionsetup[table]{skip=10pt}
\begin{table}[ht!]
	\centering
	\begin{tabular}{cc|cc|cc}
		$\qAtilde_3$& \posetAthreetilde{} & $\qDtilde_4$  & \posetDfourtilde{} & $\qEtilde_6$& \posetEsixtilde{} \\[3pt] \hline
		&&&&& \\[-6pt]
		$\qEtilde_7$ & \posetEseventilde{} & $\qEtilde_8$& \posetEeighttilde{} & $\qR_1$ & \posetRone{} \\[3pt]  \hline
		&&&&& \\[-6pt]
		$\qR_2$& \posetRtwo{} & $\qR_3$ & \posetRthree{} & $\qR_4$& \posetRfour{} \\[3pt] \hline
		&&&&& \\[-10pt]
		$\qR_5$ & \posetRfive{} &$\qR_6$ & \posetRsix{} & $\qR_7$ &  \posetRseven{} \\[3pt] 
	\end{tabular}
	\caption{Critical posets are given by these bound Hasse quivers (and their opposites). Dashed lines indicate relations. Undirected edges in the quiver indicate that the arrows may go in either direction.}
	\label{tab:Loupias frames}
\end{table}

Loupias showed that the critical posets are exactly the posets in Table \ref{tab:Loupias frames} or the opposites of those (see \cite[Theorem 1.4]{lou:ind}) and that they characterise representation-infinite posets through the reduction process (see \cite[Theorem 1.5]{lou:ind}). Therefore we can summarise these results with the following:

\begin{theorem}[{see \cite[Theorem 1.4, 1.5]{lou:ind}}] \label{thm:Lou75concealed}
	A finite poset $P$ is representation-infinite if and only if it can be reduced to a critical poset $C$, i.e. to a poset whose bound Hasse quiver (or its opposite) is in Table \ref{tab:Loupias frames}. 
\end{theorem}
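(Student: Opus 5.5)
The statement combines Lemma \ref{lemma:infinitereduction} with a converse. For the direction ``representation-infinite $\Rightarrow$ reducible to a critical poset'' I will invoke Lemma \ref{lemma:infinitereduction}, which gives a reduction of $P$ to some critical poset $C$. I will then appeal to Loupias' classification \cite[Theorem 1.4]{lou:ind}, which says that the critical posets are exactly those whose bound Hasse quivers, or their opposites, appear in Table \ref{tab:Loupias frames}. For the converse, suppose $P$ reduces to a critical $C$. Then $C$ is representation-infinite by Definition \ref{def:criticalposet}. By the remark after Definition \ref{def:reduction}, every reduction is a finite chain of steps, each removing a single vertex or contracting a single arrow. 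It therefore suffices to prove one claim: if $P'$ is a subtraction or a contraction of $P$ and $P'$ is representation-infinite, then so is $P$. Induction along the chain then finishes the argument.

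For both kinds of step I will construct a fully faithful functor $\Modf{\incidencealgebra{P'}}\to\Modf{\incidencealgebra{P}}$. Such a functor sends indecomposables to indecomposables, since it induces isomorphisms of endomorphism rings and so preserves local endomorphism rings. It also sends non-isomorphic modules to non-isomorphic modules. Hence infinitely many indecomposable $\incidencealgebra{P'}$-modules give infinitely many indecomposable $\incidencealgebra{P}$-modules.

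\emph{Subtraction.} Let $e=\sum_{x\in P'}\varepsilon_x$. Because $P'$ carries the induced order and $\incidencealgebra{P}$ has exactly one basis element for each relation $x\le y$, I get $e\,\incidencealgebra{P}\,e\cong\incidencealgebra{P'}$. I then take the induction functor $\incidencealgebra{P}e\otimes_{e\incidencealgebra{P}e}-$. Its composite with restriction $M\mapsto eM$ is naturally isomorphic to the identity, and the standard adjunction argument then shows that induction is fully faithful.

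\emph{Contraction.} Let $c\colon P\to P'$ be a contraction. Given a representation $M$ of $P'$, I define $c^*M$ by $(c^*M)_x=M_{c(x)}$. For an arrow $x\to y$ of $\HasseQuiver{P}$, I assign the identity map if $c(x)=c(y)$, and otherwise the structure map of $M$ for the relation $c(x)\le c(y)$. The commutativity relations of $I_P$ hold because $M$ is a representation of the poset $P'$ and $c$ is order-preserving. Faithfulness is clear. For fullness, a morphism $f\colon c^*M\to c^*N$ has components $f_x$ that agree along any arrow inside a fibre. Since fibres are connected, $f_x$ depends only on $c(x)$, and so $f$ descends to a morphism $M\to N$. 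Here I must check that every relation $u\le v$ of $P'$ is witnessed by some $x\le y$ in $P$ with $c(x)=u$ and $c(y)=v$, since only then is $f$ compatible with the structure maps of $P'$.

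I expect this last check to be the main obstacle: surjectivity and order-preservation alone do not obviously guarantee it. I will first settle the elementary case, where an elementary contraction identifies the two endpoints of one Hasse arrow. In that case the order on $P'$ is generated by images of relations of $P$, so the check holds. Since every reduction decomposes into single-vertex removals and elementary contractions, this case suffices. Finally, to pass from Table \ref{tab:Loupias frames} to its opposites, I will use the duality $D=\Hom_\K(-,\K)$, which identifies $\Modf{\incidencealgebra{P}}$ with $\Modf{\incidencealgebra{P\op}}$ up to anti-equivalence.
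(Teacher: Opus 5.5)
Your argument is correct once one step is fixed (second paragraph), and it takes a genuinely different route from the paper. The paper proves nothing here: it quotes \cite[Theorem 1.4, 1.5]{lou:ind} both for the list of critical posets and for the characterisation by reduction. You need Loupias only for the classification. The ``only if'' direction is Lemma \ref{lemma:infinitereduction}, and you prove the ``if'' direction by attaching to each reduction step a fully faithful embedding $\Modf{\incidencealgebra{P'}}\to\Modf{\incidencealgebra{P}}$, which reflects representation-infiniteness. These embeddings are exactly the extension functors of Section \ref{section:extensionfunctor}: your induction functor is $E^\otimes_s=T_e$, and your pull-back $c^*M=M\circ c$ is $E_c$, which the paper builds via universal localisation. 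So you make the easy half of Loupias' theorem self-contained, using machinery the paper only uses later to transport bricks. Your duality step is unnecessary, since \cite[Theorem 1.4]{lou:ind} already includes the opposites.

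The step that needs correcting is the one you flagged. The condition you need (every relation of $P'$ is generated by $c$-images of relations of $P$) does not follow from elementarity under Definition \ref{def:reduction} as written. Let $P=\{m_1,m_2,M_1,M_2,z\}$ with $m_1<M_1$, $m_1<M_2$, $m_2<M_1$, $m_2<z$. Its Hasse quiver is an $\mathbb{A}_5$ path without relations, so $P$ is representation-finite. Now map $m_1\mapsto a_1$, $m_2,z\mapsto a_2$, $M_i\mapsto b_i$ onto the crown $\qAtilde_3$ (minimal elements $a_1,a_2$, maximal elements $b_1,b_2$, with $a_i<b_j$ for all $i,j$). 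This map is surjective and order-preserving, has connected fibres, and has exactly one fibre of size two, so it is an elementary contraction in the literal sense. Yet $a_2<b_2$ is the image of no relation. Consequently $c^*$ is not full, and the statement itself fails for this ``reduction''. What you actually need, and what must be built into the notion of contraction for the theorem to hold, is that $P'$ carries the quotient order, i.e. the order generated by images of relations of $P$. The paper tacitly uses the same convention when it claims that $F$ is an equivalence in the construction of $E_c$. With that convention your fullness argument applies to every contraction directly, and the passage to elementary contractions plays no role in this step.
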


\begin{lemma}\label{lemma:postameconc}
	Let $C$ be a critical poset. Then $\incidencealgebra{C}$ is tame concealed.
	In particular it is representation-infinite, tame and $\tau$-tilting infinite.
\end{lemma}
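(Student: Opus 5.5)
The plan is to reduce everything to the known classification of tame concealed algebras, and then read off the second sentence of the statement from Theorem \ref{thm:tameconcealed}. Concretely, I would use the theorem of Happel--Vossieck \cite{hap:min} (see also \cite[Section 4.3]{ringel2006tame}). It lists all tame concealed algebras as bound quivers: the frames of the Happel--Vossieck list. These are Euclidean quivers with arbitrary orientation, together with finitely many families of frames carrying commutativity relations.

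The first step is to check that every entry of Table \ref{tab:Loupias frames}, with any orientation of the undirected edges, occurs in that list. The entries $\qAtilde_3$, $\qDtilde_4$, $\qEtilde_6$, $\qEtilde_7$, $\qEtilde_8$ carry no relations. For $\qAtilde_3$ this is because the two paths from a minimum to a maximum are never parallel, since there are two minima and two maxima. For the others the Hasse quiver is a tree. So $I_P=0$, and $\incidencealgebra{C}$ is the path algebra of a Euclidean quiver. This is tame concealed, taking $T=\K Q$ as the postprojective tilting module.

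For $\qR_1,\dots,\qR_7$ the Hasse quiver contains exactly one commutative square, or one commutative hexagon/pentagon-type cycle in $\qR_5$--$\qR_7$, with branches attached. The ideal $I_C$ is generated by the single commutativity relation indicated by the dashed line. I would match each of these with the corresponding frame in the Happel--Vossieck list: the frames with one commutativity relation of types $\qDtilde_n$, $\qEtilde_6$, $\qEtilde_7$, $\qEtilde_8$. To do this, I compare the lengths of the arms and the position where the branches attach. The opposite posets are handled by the fact that $\incidencealgebra{C\op}\cong(\incidencealgebra{C})\op$. The opposite of a tame concealed algebra is again tame concealed, by applying the duality $D$ and noting that $D$ of a postprojective tilting module is a preinjective cotilting module over $\K Q\op$, which corresponds to a postprojective tilting module.

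As an independent sanity check, or as an alternative route if matching frames is delicate, I would compute the Tits form. Each $\incidencealgebra{C}$ has global dimension at most $2$, since there is a single commutativity relation and no overlaps. So by Theorem \ref{thm:eulertits}, $q_\Lambda=\chi_\Lambda$. One then verifies that $q_\Lambda$ is critical, i.e.\ that the restrictions are weakly positive, since proper subposets are representation-finite. By Theorem \ref{thm:criticalform}, $q_\Lambda$ is then positive semi-definite of corank one with a sincere positive radical vector. By Bongartz's criterion, this characterises tame concealed algebras among representation-infinite algebras all of whose proper convex subcategories are representation-finite (critical algebras).

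Once tame concealedness is established, the ``in particular'' part is immediate from Theorem \ref{thm:tameconcealed}. The main obstacle I expect is the case-by-case identification of $\qR_1,\dots,\qR_7$, especially $\qR_5$--$\qR_7$ with their longer cycle, with the correct Happel--Vossieck frames under all admissible orientations of the unoriented arms. The orientation of a tree arm attached to the frame does not affect membership, since the list allows arbitrary orientation on such arms. Still, this has to be checked against the precise statement of the list, so I would cite \cite{lou:ind} and \cite[Theorem 1]{hap:min} explicitly for this correspondence.
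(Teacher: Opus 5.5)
Your main line is the paper's proof: the paper notes that every bound Hasse quiver in Table \ref{tab:Loupias frames} (and its opposite) occurs on the Happel--Vossieck list \cite{hap:min}, and then reads off representation-infiniteness, tameness and $\tau$-tilting infiniteness from Theorem \ref{thm:tameconcealed}, exactly as you do.

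Your side arguments are weaker than the main line and should not be relied on as written. First, the duality argument for opposites presupposes that $\End$ of a preinjective tilting module is concealed. That is true and standard, but it is exactly the point to be shown. Second, in the optional Tits-form route, setting $x_t=0$ in $q_C$ gives the Tits form of $\incidencealgebra{C}/\langle e_t\rangle$, not of the subposet $C\setminus\{t\}$. For $\qR_3$ and $t=3$ the term $x_5x_4$ survives, whereas $C\setminus\{3\}$ has no relation and a new arrow $2\to 4$. Moreover, both the step from representation-finite to weakly positive and Bongartz's criterion need extra hypotheses such as a preprojective component, which you never exhibit.
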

\begin{proof}
	Critical posets are given by bound Hasse quivers quivers in Table \ref{tab:Loupias frames} (and their opposites). Such quivers appear on Happel-Vossieck’s list of tame concealed algebras \cite{hap:min}. The second part of the statement follows immediately from Theorem \ref{thm:tameconcealed}.
\end{proof}

Thanks to this result, the incidence algebra $\incidencealgebra{C}$ of a critical poset $C$ is in particular tilted of Euclidean type (by definition), so the following result holds:

\begin{theorem}\label{thm:titscritical}
	Let $C$ be a critical poset, $\incidencealgebra{C}$ its incidence algebra.
	\begin{enumerate}
		\item[$(1)$] $\gldim{\incidencealgebra{C}}\leq 2$ and any indecomposable $X\in \Modf{\incidencealgebra{C}}$ has projective dimension or injective dimension less or equal than one.
		
		\item[$(2)$] $\HasseQuiver{C}$ is acyclic.
		
		\item[$(3)$] The Tits form $q_{\incidencealgebra{C}}$ (also denoted by $q_C$ in the following) is critical. In particular it is positive semi-definite with radical rank $1$ and with a sincere positive radical vector.
	\end{enumerate} 
\end{theorem}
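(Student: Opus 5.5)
The three parts follow from Lemma \ref{lemma:postameconc}: $\incidencealgebra{C}$ is tame concealed, hence tilted of Euclidean type $Q$. The plan is to derive each item from the general theory of tilted and concealed algebras, and then feed the result into Theorem \ref{thm:criticalform}.

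For (1), I would write $\incidencealgebra{C}\cong\End_{\K Q}(T)$ with $T$ a (postprojective) tilting $\K Q$-module. The standard fact that a tilted algebra has global dimension at most $2$ (\cite[Ch.VIII]{sim:vol1}) gives the first claim. For the second claim I would use the torsion pair induced by $T$: every indecomposable module over a tilted algebra lies in the torsion-free class $\mathcal{Y}(T)$ or in the torsion class $\mathcal{X}(T)$, since the induced torsion pair on the tilted side splits. Modules in $\mathcal{Y}(T)$ have injective dimension at most one, and modules in $\mathcal{X}(T)$ have projective dimension at most one. This is the classical characterisation of tilted algebras (Happel--Ringel), which I would quote from \cite[Ch.VIII, Theorem 3.5 / Lemma 3.2]{sim:vol1}.

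For (2), the Hasse quiver of a finite poset is acyclic by construction. An oriented cycle $x_1\to x_2\to\dots\to x_1$ would give $x_1< x_2<\dots< x_1$, contradicting antisymmetry, since arrows only join distinct elements with $x<y$. Alternatively, tilted algebras have directed quivers.

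For (3), I would first use (1),(2) and Theorem \ref{thm:eulertits} to identify $q_C$ with the Euler form $\chi_{\incidencealgebra{C}}$. Tilting preserves Euler forms up to the isometry of Grothendieck groups induced by $T$, so $\chi_{\incidencealgebra{C}}$ is $\Z$-equivalent to the Euler form of $\K Q$, i.e.\ to the Tits form of a Euclidean quiver. That form is positive semi-definite of radical rank $1$, which is a $\Z$-invariant property.

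The main obstacle is criticality, specifically weak positivity of each restriction $q_C^t$. Deleting the vertex $t$ from $\HasseQuiver{C}$ gives the Tits form of $\incidencealgebra{C\setminus\{t\}}$, since the relations of the subposet are those of $C$ away from $t$. Because $C$ is critical, $C\setminus\{t\}$ is representation-finite, and by Bongartz's criterion for (simply connected, schurian) incidence algebras representation-finiteness implies a weakly positive Tits form. Failing that, I would check directly from Table \ref{tab:Loupias frames} that each restriction is positive definite, being a form of Dynkin type or a tilt thereof. Non-weak-positivity of $q_C$ itself comes from the positive radical vector of the Euclidean form. Transporting it to a positive vector requires care, so instead I would exhibit for each frame the dimension vector of a one-parameter family, which is positive and has $q_C=0$. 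Once criticality holds, Theorem \ref{thm:criticalform} yields the remaining assertions.
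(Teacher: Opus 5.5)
Your (1), (2) and the first half of (3) are sound, but the step you yourself flag as the obstacle, weak positivity of the restrictions $q_C^t$, fails as written. You claim that deleting $t$ from $\HasseQuiver{C}$ gives the Tits form of $\incidencealgebra{(C\setminus\{t\})}$. This is false, because removing an element of a poset can create new cover relations and destroy commutativity relations.

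Take $C=\qR_3$ with the labelling of the proof of Theorem~\ref{thm:genbrickcritical}, and $t=3$. Since $3$ is the only element strictly between $2$ and $4$, the subposet acquires a Hasse arrow $2\to 4$. The relation from $5$ to $4$ disappears, because only the path through $6$ survives. So $C\setminus\{3\}$ is a tree of type $\mathbb{E}_7$ with Tits form $\sum_{i\neq 3}x_i^2-x_1x_2-x_2x_4-x_4x_6-x_5x_6-x_6x_7-x_7x_8$. By contrast,
\[
q_C^3=\textstyle\sum_{i\neq 3}x_i^2-x_1x_2-x_4x_6-x_5x_6-x_6x_7-x_7x_8+x_4x_5 .
\]
This is the Tits form of $\incidencealgebra{C}/\langle e_3\rangle$: the quiver $\HasseQuiver{C}$ minus $3$, with the zero relation $\alpha_{64}\alpha_{56}=0$. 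That algebra is not an incidence algebra. So representation-finiteness of $C\setminus\{t\}$ plus Bongartz proves weak positivity of the wrong form. Your fallback (``check the table directly, Dynkin type or a tilt thereof'') is a verification you have not carried out, not an argument.

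You need no information about $C\setminus\{t\}$ at all; what you already proved suffices once your exhibited radical vector is chosen sincere. From the tilting isometry and Theorem~\ref{thm:eulertits}, $q_C$ is positive semi-definite and $\Rad(q_C)$ has rank one. Let $v>0$ be sincere with $q_C(v)=0$, for instance the vectors $y_C$ of the Appendix. Then $q_C$ is not weakly positive. If $q_C^t(z)=0$, then $z$ extended by $z_t=0$ lies in $\Rad(q_C)$, so $z$ is a rational multiple of $v$, and $v_t\neq 0$ forces $z=0$. Since $q_C^t\geq 0$, each $q_C^t$ is positive definite, hence weakly positive. So $q_C$ is critical, and the sincere positive radical vector comes directly rather than from Theorem~\ref{thm:criticalform}.

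Repaired this way, your (3) is a genuine alternative to the paper. The paper simply invokes the proof of \cite[Proposition 4.3.7]{ringel2006tame} for criticality of $\chi_C$ and transfers it to $q_C$ via Theorem~\ref{thm:eulertits}. You trade that citation for the tilting isometry plus one explicit sincere radical vector per frame.

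Two minor points. In (1) you swapped the homological bounds: modules in $\mathcal{Y}(T)$, which contains the projectives, have projective dimension at most one, and modules in $\mathcal{X}(T)$ have injective dimension at most one. The conclusion is unaffected. In (2) your antisymmetry argument is correct and more direct than the paper's appeal to acyclicity of quivers of tilted algebras.
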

\begin{proof}
	$(1)$ Follows from \cite[Ch.VIII, Lemma 3.2]{sim:vol1} since $\incidencealgebra{C}$ is tilted of Euclidean type.
	
	$(2)$ The quiver associated to a tilted algebra is acyclic (see \cite[Ch.VIII, Corollary 3.4]{sim:vol1}).
	
	$(3)$ By the proof of \cite[Proposition 4.3.7]{ringel2006tame}, tame concealed algebras have a critical Euler quadratic form, therefore $\chi_{\incidencealgebra{C}}$ (also denoted by $\chi_C$) is critical. But now, since $(1)$ and $(2)$ hold, we can apply Theorem \ref{thm:eulertits} and we have that $\chi_C=q_C$. Therefore the Tits form $q_C$ is critical and by Theorem \ref{thm:criticalform} we conclude that it is positive semi-definite with radical rank $1$ and with a sincere positive radical vector.
\end{proof}

	\section{Generic bricks over incidence algebras}
	\begin{definition}[{c.f.~\cite[Introduction]{bau:gen}}]
	Let $\Lambda$ be a finite dimensional $\K$-algebra.
	\begin{itemize}
		\item $\Lambda$ is said to be \textbf{brick-continuous} if it admits an infinite family of non-isomorphic bricks with the same dimension.
		\item The \textbf{endolength} of a module $M\in \Mod{\Lambda}$, denoted by $\Endol(M)$, is the length of $M$ when considered as an $\End_\Lambda(M)\op$-module. A module with finite endolength is called \textbf{endofinite}.
		\item A module $G\in \Mod{\Lambda}$ is called \textbf{generic module}	if it is indecomposable, endofinite and with infinite length as a $\Lambda$-module.
		\item A generic module G is called \textbf{generic brick} if it is also a brick, i.e.~$\End_\Lambda(G)$ is a division algebra over $\K$.
	\end{itemize}
\end{definition}

In the brick-continuous definition we can substitute \emph{same dimension} with \emph{same dimension vector}. Indeed:
\begin{lemma}\label{lemma:brickcontdef}
	Let $\Lambda$ be a finite dimensional $\K$-algebra. Then, $\Lambda$ is brick-continuous if and only if it admits an infinite family of non-isomorphic bricks with the same dimension vector.
\end{lemma}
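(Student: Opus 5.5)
The argument is a pigeonhole over dimension vectors. The backward implication is immediate: modules with the same dimension vector $\underline{d}=(d_1,\dots,d_n)$, where $n=\card{Q_0}$ for $\Lambda\cong\palgebra{}/I$, all have the same $\K$-dimension $\sum_i d_i$. So an infinite family of pairwise non-isomorphic bricks sharing one dimension vector is already a brick-continuous family.

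For the forward implication, suppose $\{B_\lambda\}_{\lambda\in L}$ is an infinite family of pairwise non-isomorphic bricks, all of dimension $d$ over $\K$.
\begin{enumerate}
\item Each dimension vector $\dimvec{B_\lambda}$ lies in the set $\{\underline{x}\in\Z_{\geq 0}^{n}\sth \sum_i x_i=d\}$. This set is finite, having $\binom{d+n-1}{n-1}$ elements.
\item The assignment $\lambda\mapsto\dimvec{B_\lambda}$ therefore maps the infinite set $L$ into a finite set.
\item By the pigeonhole principle, some fibre $L'\subseteq L$ is infinite. The subfamily $\{B_\lambda\}_{\lambda\in L'}$ then consists of pairwise non-isomorphic bricks with a common dimension vector.
\end{enumerate}

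The only point needing care is that dimension vectors are well defined and add up to the total dimension. This holds because $\Lambda$ is assumed basic with $\Modf{\Lambda}\cong\Repf{Q,I}$: a module $M$ decomposes as $\bigoplus_{i\in Q_0}\varepsilon_i M$, so $\dim_\K M=\sum_i\dim_\K\varepsilon_i M$. There is no genuine obstacle here; the step above is purely bookkeeping.
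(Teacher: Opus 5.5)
Your proof is correct and follows the paper's argument: the backward direction sums the entries of the common dimension vector, and the forward direction observes that only finitely many dimension vectors have entries summing to $d$, then applies the infinite pigeonhole principle. Your explicit count $\binom{d+n-1}{n-1}$ and the justification via $M=\bigoplus_{i\in Q_0}\varepsilon_i M$ make precise what the paper phrases as ``partitions of $d$ with at most $n$ summands'' and its citation of \cite[Ch.III, Corollary 3.6]{sim:vol1}.
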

\begin{proof}
	$(\Leftarrow)$ It is clear, since same dimension vector $\underline{d}=(d_1,\dots,d_n)$ implies same dimension $d=\sum_{i=1}^{n} d_i$ (c.f.~\cite[Ch.III, Corollary 3.6]{sim:vol1}).
	
	$(\Rightarrow)$ Let $\{B_i\}_{i\in I}$ be the brick continuous family of dimension $d$ and for any $i\in I$ let $\underline{d}^i=(d^i_1,\dots,d^i_n)\defeq \dimvec{B_i}$. As before we have that $d=\sum_{j=1}^{n} d^i_j$  for all $i\in I$, so the dimension vectors $\underline{d}^i$ can be thought as partitions of $d$ with at most $n$ summands. Since we have a finite number of such partitions but $I$ is infinite, by the \emph{Infinite Pigeonhole Principle} we must have an infinite subset $H\subseteq I$ such that $\{B_h\}_{h\in H}$ have the same dimension vector.
\end{proof}

For tame algebras we have the following result:
\begin{theorem}[{\cite[Theorem 4.6]{crawley1991tame}}]\label{thm:endgenericmodule}
	If $\Lambda$ has tame representation type and $G$ is a generic $\Lambda$-module, then $\End_\Lambda(G)/\Rad(\End_\Lambda(G)) \cong \K(x)$ (where $\K(x)$ denotes the field of rational functions, i.e.~the field of fractions of the polynomial ring $\K[x]$).
\end{theorem}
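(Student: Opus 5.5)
The statement to prove is Crawley-Boevey's result: if $\Lambda$ is tame and $G$ is generic, then $\End_\Lambda(G)/\Rad(\End_\Lambda(G))\cong\K(x)$. The plan is to follow Crawley-Boevey's route, in three steps.

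\textbf{Step 1: $\End_\Lambda(G)$ is local with division-ring top.} Since $G$ is endofinite, it is a module of finite length over $E\defeq\End_\Lambda(G)$ (acting on the appropriate side). An indecomposable module of finite length over its endomorphism ring has local endomorphism ring, by a Fitting-lemma argument: every endomorphism $\phi$ satisfies $\operatorname{Im}\phi^N=\operatorname{Im}\phi^{N+1}$ and $\ker\phi^N=\ker\phi^{N+1}$ for $N\gg0$, hence is nilpotent or invertible. Therefore $D\defeq E/\Rad(E)$ is a division ring, and $\K\subseteq D$ is central because $\K$ is algebraically closed. It then suffices to prove two things: $D$ is commutative, and $D$ has transcendence degree one over $\K$ and is purely transcendental.

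\textbf{Step 2: transfer tameness to a statement about $D$.} Write $d\defeq\Endol(G)$. Crawley-Boevey's correspondence gives a bimodule $M$ for $\Lambda$ and $R$, where $R$ is a finitely generated $\K$-subalgebra of $E$ surjecting onto a suitable subring of $D$; concretely $R$ is a localisation of a finitely generated algebra $\K[x_1,\dots,x_r]$ inside $E$. The bimodule $M$ is finitely generated free over $R$ of rank $d$ (up to localisation), and $G\cong M\otimes_R D$ or a similar localisation. The functor $M\otimes_R-$ sends the simple $R$-modules $R/\mathfrak m$ to $\Lambda$-modules of dimension $d$, and these are indecomposable and pairwise non-isomorphic for generic $\mathfrak m$. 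Tameness says the modules of dimension $d$ lie in finitely many one-parameter families $N_i\otimes_{\K[x]_{f_i}}-$. Comparing these two parametrisations, via a generic-point argument on the variety of maximal ideals of $R$, forces $\dim R\le 1$. Hence $D$ has transcendence degree at most one. It is not zero, since otherwise $D=\K$ would be algebraic and $G$ would have finite length, contradicting genericity.

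\textbf{Step 3: identify $D$ with $\K(x)$.} Transcendence degree one together with the one-parameter families shows that $G$ is obtained as $N_i\otimes_{\K[x]_{f_i}}\K(x)$ for some $i$. This exhibits $\K(x)$ inside $D$, with $D$ finite-dimensional over $\K(x)$. By Tsen's theorem $\K(x)$ is $C_1$, so $\operatorname{Br}(\K(x))=0$ and there are no noncommutative division algebras finite over it. Hence $D$ is a finite field extension of $\K(x)$. Minimality of the endolength of $G$, or the fact that the family is parametrised by an open subset of the affine line, forces $D=\K(x)$: otherwise $D$ is the function field of a curve covering the line, and the family would be parametrised by that curve instead of the line, contradicting the form of the families in the tameness definition.

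\textbf{Main obstacle.} I expect the hardest part to be Step 2, namely building the finitely generated subalgebra $R$ and the bimodule $M$ that realise $G$ generically, and rigorously matching it with the finitely many tame families. I would first try Crawley-Boevey's "generic modules via the Ziegler spectrum" approach, using that endofinite modules are $\Sigma$-pure-injective and decompose nicely.
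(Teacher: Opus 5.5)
\textbf{There is a genuine gap.} The paper gives no proof of this statement; it is quoted from Crawley-Boevey. So your sketch has to stand on its own, and it does not. The step that carries the whole theorem is assumed rather than proved.

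Tameness is a statement about finite-dimensional modules, and the entire difficulty is transferring it to the infinite-dimensional module $G$. In Step 2 you posit a finitely generated subalgebra $R\subseteq E$ and a bimodule $M$, free of rank $d$ over $R$, with $G\cong M\otimes_R D$. You then argue with $\operatorname{Max}(R)$, $\dim R$ and the transcendence degree of $D$. None of this is constructed, and it presupposes commutativity:
\begin{itemize}
\item $E$ and $D$ are a priori noncommutative, and commutativity of $D$ is part of the conclusion.
\item A finitely generated subalgebra of $E$ need not be commutative.
\item A finitely generated noncommutative $\K$-algebra may have no finite-dimensional simple modules at all (the first Weyl algebra in characteristic $0$), so there may be nothing to specialise at.
\end{itemize}
Even for commutative $R$, the dimension count needs the specialisations $M\otimes_R R/\mathfrak m$ to be indecomposable and pairwise non-isomorphic (or at least to have finite fibres) for generic $\mathfrak m$. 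Otherwise a two-dimensional $R$ can map into a single one-parameter family, and nothing bounds the transcendence degree. That property is exactly the hard part, and it is only asserted.

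Step 3 then assumes the conclusion outright: ``$G\cong N_i\otimes_{\K[x]_{f_i}}\K(x)$ for some $i$'' is essentially the theorem. Even granting it, $\End_\Lambda(G)$ may contain endomorphisms not commuting with $x$. Tsen's theorem does show $D$ is a field finite over $\K(x)$, but only after Kaplansky's PI theorem, since $\K(x)$ need not be central in $D$. Ruling out a proper finite extension of $\K(x)$ is left as a heuristic about curves covering the line.

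In Crawley-Boevey's work, the missing bridge comes from his bocs-reduction analysis of tame algebras, applied to arbitrary modules rather than only finite-dimensional ones. Roughly: for fixed $d$, every indecomposable module of endolength at most $d$ is the image, under one of finitely many reduction functors, of a module over $\K$ or over some $\K[x]_f$. The only generic module among the latter is $\K(x)$. Since these functors preserve endomorphism rings modulo radicals, $D\cong\K(x)$ follows directly.

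A smaller point concerns Step 1. Your Fitting argument is invalid as stated, because $\operatorname{Im}\phi^N$ and $\ker\phi^N$ are $\Lambda$-submodules but not $E$-submodules. Finite endolength therefore does not force these chains to stabilise. For example, take $\Lambda=\K$ and $G$ an infinite-dimensional vector space, which has endolength $1$; a shift operator has strictly decreasing images. Locality of $E$ is true, but the standard route is that endofinite modules are $\Sigma$-pure-injective, and indecomposable pure-injective modules have local endomorphism rings.
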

We also have a useful corollary:
\begin{corollary}
	If $\Lambda$ has tame representation type and $G$ is a generic $\Lambda$-module, then $G$ is a generic brick if and only if $\End_\Lambda(G)\cong \K(x)$.
\end{corollary}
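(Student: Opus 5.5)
The corollary follows directly from Theorem \ref{thm:endgenericmodule}; I will prove the two implications separately. Throughout, write $E \defeq \End_\Lambda(G)$ and $J \defeq \Rad(E)$. Since $G$ is a generic $\Lambda$-module over a tame algebra, Theorem \ref{thm:endgenericmodule} gives $E/J \cong \K(x)$.

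For the direction ($\Leftarrow$), assume $E \cong \K(x)$. Then $E$ is a field, hence in particular a division algebra over $\K$. So $G$ is a brick. It is generic by hypothesis, so $G$ is a generic brick.

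For the direction ($\Rightarrow$), assume $G$ is a generic brick, so $E$ is a division ring. The Jacobson radical of a division ring is zero: every nonzero element $e$ is invertible, so $e$ cannot lie in $J$. Indeed, if $e \in J$ then $1 - e^{-1}e = 0$ would have to be invertible, which is absurd. Hence $J = 0$, and therefore
\[
E \cong E/J \cong \K(x)
\]
by Theorem \ref{thm:endgenericmodule}.

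There is no real obstacle here. The only point to check is that the radical $\Rad(\End_\Lambda(G))$ in Theorem \ref{thm:endgenericmodule} is the Jacobson radical. For the endomorphism ring of an indecomposable endofinite module, which is local, this radical is the unique maximal ideal, namely the set of non-invertible elements. This set is $\{0\}$ exactly when $E$ is a division ring, which closes the argument in both directions.
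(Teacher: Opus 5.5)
Your proposal is correct and follows essentially the same route as the paper's proof. In one direction you observe that a division ring has zero radical, so Theorem \ref{thm:endgenericmodule} gives $\End_\Lambda(G)\cong\K(x)$; in the other you note that $\K(x)$ is a division algebra, so a generic $G$ with this endomorphism ring is a generic brick. Your extra details (the explicit argument that $J=0$ and the locality remark) are correct but not needed.
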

\begin{proof}
	If $G$ is a generic brick, $\End_\Lambda(G)$ is a division algebra over $\K$ and in this case $\Rad(\End_\Lambda(G))=0$.Therefore, by Theorem \ref{thm:endgenericmodule} we get that $\End_\Lambda(G)\cong \K(x)$.
	
	Conversely, if $\End_\Lambda(G)\cong \K(x)$, $G$ is a generic module  that is also a brick, so it is a generic brick by definition.
\end{proof}

\begin{remark}\label{rem:genmodulek(x)}
	Notice that $G$ generic module with $\End_\Lambda(G)\cong \K(x)$ implies $G$ generic brick for any finite-dimensional $\K$-algebra $\Lambda$ of infinite representation type (not necessarily tame).
\end{remark}


\subsection{Generic bricks of critical posets}\label{section:brickcritical}
We will now present the general construction of a brick-continuous family and a generic brick for all the twelve families of critical posets in Table \ref{tab:Loupias frames}, i.e.~for all the minimally representation-infinite incidence $\K$-algebras. The idea we will apply is the following for all the different cases: let $C$ be a critical poset, $n=\card{C}$
\begin{itemize}
	\item by Theorem \ref{thm:titscritical} we know that $q_C$ is critical, in particular it is positive semi-definite with radical rank $1$ and with a sincere positive radical vector. We find $\Rad(q_C)$ using Lemma \ref{lem:radpossemi} through Gauss-Jordan elimination method on the matrix $B_C$ (matrix associated to the Tits form $q_C$, i.e.~$q_C(z)= z^t B_C z$, see Definition \ref{def:quadmatrix}) and we denote by $y_C$ the minimal sincere positive radical vector (in $\Z^n$);
	\item we define a family of representations $\{B(\lambda)_C\}_{\lambda\in \K}$ with $\dimvec{B(\lambda)_C} = y_C$ and we show that for $\lambda\in \K$ we obtain an infinite family of non-isomorphic bricks with the same dimension, proving that $\incidencealgebra{C}$ is brick-continuous;
	\item we explicitly construct the generic brick $G_C$ (that exists thank to \cite[Corollary 1.3]{bau:gen} and \cite[Theorem 5.6]{schlegel2026infinitetautiltingtheory}) starting from the brick-continuous family $\{B(\lambda)_C\}_{\lambda\in \K}$.
\end{itemize}

We will only consider some particular orientations of the critical posets thanks to the following result:

\begin{lemma}\label{lem:orientatonreflection}
	Let $P$ be a poset with Hasse quiver
	\[ 
	\HasseQuiver{P}\defeq \quad \begin{tikzcd}[ampersand replacement=\&,cramped]
		{\HasseQuiver{P'}} \& {a_1} \& {a_2} \& \cdots \& {a_n}
		\arrow[no head, from=1-1, to=1-2]
		\arrow[no head, from=1-2, to=1-3]
		\arrow[no head, from=1-3, to=1-4]
		\arrow[no head, from=1-4, to=1-5]
	\end{tikzcd}\]
	where $P'$ is a subposet with a fixed orientation containing all the relations in $P$ (i.e.~$I_P=I_{P'}$), while the undirected edges denote that the arrows may go in either direction.
	
	Then, there is a bijection of (non-simple) finite-dimensional bricks over any two different orientations of the undirected edges of the bound Hasse quiver $(\HasseQuiver{P}, I_P)$.
	Moreover, there is a bijection of finite-dimensional bricks over any given orientation and the opposite of it.
	
	In particular, if there is a brick-continuous family over any given orientation, there is a brick-continuous family for all the other possible orientations and their opposites.
\end{lemma}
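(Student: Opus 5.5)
The plan is to use BGP reflection functors at sink/source vertices of the tail $a_1 - \cdots - a_n$, together with the standard duality $D=\Hom_\K(-,\K)$ for the second claim.

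\emph{Step 1: reduction to a single flip.} Any two orientations of the undirected edges can be connected by a finite sequence of moves. Each move reverses all arrows incident to one vertex $a_i$ that is currently a sink or a source of the tail part. Concretely, we process the edges from the free end $a_n$ inwards. The end vertex $a_n$ is always a sink or source, so its edge can be flipped. Once the edges between $a_{i+1},\dots,a_n$ agree with the target orientation, we want to flip the edge $a_{i}-a_{i+1}$. For this we apply reflections at $a_n, a_{n-1},\dots,a_{i+1}$ in turn. This reverses exactly the edge between $a_i$ and $a_{i+1}$: the edges further out are reversed twice, hence restored, as in the standard Coxeter-element argument for type $\mathbb{A}$ subtrees. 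Since $I_P=I_{P'}$ and no relation involves the vertices $a_i$ or the edge $Q_{P'}-a_1$, every such vertex is a sink or source \emph{without relations passing through it*}. The edge joining $Q_{P'}$ to $a_1$ itself is never flipped by these moves, and it does not need to be, because it belongs to the fixed orientation of $P'$ (presumably $a_1$ is attached to $P'$ by a fixed arrow); only the edges among the $a_i$ vary.

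\emph{Step 2: reflection functors.} For a sink (resp. source) $a$ with no relations through it, the BGP reflection functor $S_a^{+}$ (resp. $S_a^-$) is defined on representations of the bound quiver. It gives an equivalence between the full subcategory of representations without the simple $S(a)$ as a direct summand and the analogous subcategory for the reflected bound quiver. It is also fully faithful there, so it preserves endomorphism rings. Hence it sends bricks not isomorphic to $S(a)$ to bricks, bijectively, and it preserves non-isomorphism. This explains the restriction to non-simple bricks in the statement. Composing the reflections along the sequence from Step 1 gives the claimed bijection. Here I will check that no intermediate stage produces a simple module from a non-simple brick; if it does, the bijection is on bricks up to finitely many simples, which still suffices for the "in particular".

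\emph{Step 3: opposite poset.} The duality $D$ sends $\Modf{\incidencealgebra{P}}$ to $\Modf{\incidencealgebra{P}\op}=\Modf{\incidencealgebra{P\op}}$. It is contravariant and an anti-equivalence with $\End(DM)\cong\End(M)\op$, so it maps bricks to bricks bijectively and preserves dimension vectors.

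\emph{Step 4: brick-continuity.} Reflection changes dimension vectors by the linear map $s_a$. So an infinite family of pairwise non-isomorphic bricks with a fixed dimension vector (Lemma~\ref{lemma:brickcontdef}) is sent, after discarding at most finitely many simples, to an infinite family with a fixed dimension vector. By Lemma~\ref{lemma:brickcontdef} again this gives brick-continuity; the same holds for $D$.

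The main obstacle is Step 1. It requires verifying that reflection functors still behave well for bound quivers, that is, that relations in $P'$ do not interact with the vertices being reflected. Precisely, one must check that the reflected algebra is again the incidence algebra of the poset with the flipped Hasse quiver. This holds because the tail is a tree attached at a single vertex, so no new commutativity relations appear.
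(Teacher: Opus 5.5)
Your overall strategy matches the paper's proof. You use reflection functors at the tail vertices, which is what the paper's APR-tilting functors reduce to here, and full faithfulness to transport bricks. You then use the duality $D$ for the opposite poset, and track dimension vectors through a fixed linear map. The problems are in Step~1, and there are two of them.

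\emph{First gap: the edge from $Q_{P'}$ to $a_1$.} You treat this edge as fixed, but in the statement it is undirected, and the paper's proof explicitly includes flipping it. This matters. In every critical poset of Table~\ref{tab:Loupias frames}, each tail is attached to the core by an undirected edge. For $\qDtilde_4$ every tail has $n=1$, so under your reading the lemma says nothing about it at all.

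The repair stays inside your framework. Let $a_0 := v$ be the vertex of $P'$ adjacent to $a_1$. To flip the edge $v - a_1$, reflect once at each of $a_1,\dots,a_n$. This is legitimate for three reasons:
\begin{itemize}
\item $a_1$ meets $Q_{P'}$ only through $v$, so any two parallel paths of $Q_P$ lie in $Q_{P'}$.
\item Hence no relation involves an $a_j$ or the edge $v-a_1$.
\item The flipped quiver is still the Hasse quiver of a poset with the same ideal.
\end{itemize}

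\emph{Second gap: the reflection order.} The order $a_n, a_{n-1},\dots,a_{i+1}$ is admissible only when the segment $a_i - \cdots - a_n$ is linearly oriented. For example, take $a_1 \to a_2 \leftarrow a_3$ and suppose you want to flip $a_1 - a_2$. Reflecting first at the source $a_3$ gives $a_1 \to a_2 \to a_3$, and now $a_2$ is neither a sink nor a source.

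What works is to reflect at each vertex of $\{a_{i+1},\dots,a_n\}$ exactly once, in an order compatible with the arrows:
\begin{itemize}
\item If $a_i \to a_{i+1}$, reflect at all of them as sinks, ordered so that for every arrow $u\to w$ inside the segment, $w$ is reflected before $u$.
\item Otherwise, reflect at all of them as sources, using the dual order.
\end{itemize}
In the example, reflect at $a_2$ and then at $a_3$. Each inner edge is then reversed twice and only $a_i - a_{i+1}$ once, which is what you intended.

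\emph{On your hedge in Step~2.} A non-simple brick can indeed become simple: over $1\to 2$, $S_2^+$ sends $\K\xrightarrow{1}\K$ to $S(1)$. So the composite of reflections does not restrict to a bijection on non-simple bricks. The paper obtains a genuine bijection of all bricks by adding $S(a)\mapsto S'(a)$ at each single reflection and then composing. For the brick-continuity conclusion, your ``up to finitely many simples'' version is already enough.
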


\begin{proof}
	Let $O_1$ and $O_2$ be two different orientations of the undirected edges in $\HasseQuiver{P}$ and let $(\HasseQuiver{P, O_i}, I_P)$ be the bound Hasse quiver with the fixed orientation of the arrows.
	
	The first part of this Lemma follows immediately from the properties of \emph{APR-tilting functors} (introduced in \cite{auslander1979coxeter} as a generalisation of reflection functors \cite{bernstein1973coxeter}).
	Since we only consider changing the orientations of the edge connecting $\HasseQuiver{P'}$ to $a_1$ and the edges between the $a_i$'s, the definition of APR-tilting functors coincide with that of reflection functors in this case.
	Hence, by \cite[Theorem 1.2]{bernstein1973coxeter} we can link the two different orientations of the bound quiver using APR-tilting functor and we obtain a bijection between non-simple finite-dimensional indecomposable modules over $(\HasseQuiver{P, O_1}, I_P)$ and over $(\HasseQuiver{P, O_2},I_P)$.	
	Moreover, by \cite[Theorem 1.11, 1.12]{auslander1979coxeter} these functors are fully faithful, so we can restrict the bijection to non-simple finite-dimensional bricks. Sending the simple representations $S(j)$ over $(\HasseQuiver{P, O_1}, I_P)$ to the simple representations $S(j)$ over $(\HasseQuiver{P, O_2}, I_P)$, we get a bijection between all finite-dimensional bricks over the two different orientations.
	
	The second part of the Lemma follows directly from the properties of duality $D$ of finite-dimensional representations of a bound quiver. Let $O$ be an orientation of the undirected edges in $\HasseQuiver{P}$ and let $(\HasseQuiver{P, O}, I_P)$ be the bound Hasse quiver with the fixed orientation of the edges. The duality $D$, by fully faithfulness, gives a bijection of finite-dimensional bricks over $(\HasseQuiver{P, O}, I_P)$ and over $(\HasseQuiver{P\op, O\op}, I_{P\op})$.

	Lastly, if we have a brick-continuous family of dimension vector $\underline{d}$ for a given orientation, applying APR-tilting functors results in a brick-continuous family with dimension vector $\underline{d}'$, where $\underline{d}'$ is computed following the formulas in \cite[Theorem 1.1]{bernstein1973coxeter}. Instead, if we consider the opposite orientation, we obtain a brick-continuous family with the same dimension vector (by duality of representations).
	
	Therefore changing orientations of the undirected edges and/or considering the opposite bound quivers does not affect the brick-continuity of the bound Hasse quiver.
\end{proof}

\begin{remark}\label{rem:allorientations}
	Thanks to this result we can study only one particular orientation of the critical posets. Indeed if we find a brick-continuous family for one particular orientation of a critical poset, applying Lemma \ref{lem:orientatonreflection} we get brick-continuous families for any orientation of the undirected edges and all their opposite posets.
	
	Our construction of the generic bricks for the critical posets will only depend on the brick-continuous family, so we can find a generic brick for all the possible orientations of the undirected edges of the critical posets and all the opposites of these.
\end{remark}
 
We will now state our first main result:

\begin{theorem}\label{thm:genbrickcritical}
	The twelve families of minimally representation-infinite incidence $\K$-algebras given by critical posets in Table \ref{tab:Loupias frames} (considering any orientation of the undirected edges), and also the opposites of these, are brick-continuous and admit a generic brick which can be explicitly constructed.
\end{theorem}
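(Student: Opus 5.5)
We follow the three-step scheme described at the beginning of Section \ref{section:brickcritical}, treating the twelve critical posets of Table \ref{tab:Loupias frames} case by case. By Remark \ref{rem:allorientations} we may fix one convenient orientation of the undirected edges in each frame. We choose the orientation where the arms point away from the ``core'' (the square or the diamond with its commutativity relation), so that the resulting bound quiver is as close as possible to a Euclidean quiver.

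\textbf{Step 1: the radical vector.} For each critical poset $C$ we write down the Tits form $q_C$ and its matrix $B_C$. Note that the only relations are the single commutativity relation in the $\qR_i$ cases, contributing $+x_ix_j$ between the source and sink of the square. We then compute $\ker(B_C)$ by Gauss--Jordan elimination. By Theorem \ref{thm:titscritical}(3) and Lemma \ref{lem:radpossemi}, this kernel is of rank one and spanned by a sincere positive vector $y_C$. For the five Euclidean frames $\qAtilde_3,\qDtilde_4,\qEtilde_6,\qEtilde_7,\qEtilde_8$, $y_C$ is the classical null root (for $\qAtilde_3$ the posets are commutative only vacuously, since the two paths do not share endpoints).

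\textbf{Step 2: the brick-continuous family.} We define $B(\lambda)_C$ with $\dimvec{B(\lambda)_C}=y_C$, put identity or standard embedding matrices on all arrows but one, and place a matrix involving the parameter $\lambda\in\K$ on a single arrow. For the $\qR_i$ frames we must choose the matrices so that the commutativity relation holds. The natural choice is to let the square carry $\K^2$ or $\K^3$ with coordinate inclusions and projections, and to put $\lambda$ on an arm. For each $\lambda$ we then show two things:
\begin{itemize}
\item $\End(B(\lambda)_C)=\K$, by solving the linear equations for an endomorphism $(f_x)_{x\in C}$ vertex by vertex, propagating scalars along the identity arrows.
\item $B(\lambda)\not\cong B(\mu)$ for $\lambda\neq\mu$, by the same computation applied to homomorphisms, which shows that $\Hom(B(\lambda),B(\mu))=0$.
\end{itemize}
Lemma \ref{lemma:brickcontdef} then gives brick-continuity for the chosen orientation. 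Lemma \ref{lem:orientatonreflection} transports this to all orientations and to the opposites.

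\textbf{Step 3: the generic brick.} We replace the parameter $\lambda$ by the indeterminate $x$ and take $G_C$ to be the representation with $\K(x)^{(y_C)_i}$ at vertex $i$ and the same matrices, now over $\K(x)$. $G_C$ has infinite length over $\incidencealgebra{C}$ because $\dim_\K\K(x)=\infty$. Its endolength is at most $\sum_i (y_C)_i$, since $\K(x)$ acts by scalars, so it is endofinite. The Step 2 computation runs verbatim over the field $\K(x)$ and shows that $\End(G_C)$ consists of the scalars that commute with the $x$-matrix, namely $\K(x)$. Remark \ref{rem:genmodulek(x)} then shows that $G_C$ is a generic brick. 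For the other orientations and for the opposites we transport $G_C$ using the APR-tilting and reflection functors and the duality, extended to $\Mod{}$ via direct limits. These are fully faithful and preserve endofiniteness, in the sense that the dimension vector over $\K(x)$ is finite.

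\textbf{Main obstacle.} The main obstacle is the case analysis of Step 2 for the $\qR_i$ frames. There the dimension vector has entries $2$ or $3$ at the core, so the right matrices must simultaneously satisfy the commutativity relation and make the endomorphism ring trivial. I would first guess matrices modelled on the $\qDtilde$ or $\qEtilde$ null-root representations that arise after APR-tilting the core square. A secondary concern is that the transport to other orientations in Step 3 needs infinite-dimensional versions of the reflection functors; if this is delicate, a fallback is to construct $G_C$ directly from the transported finite family in the same way.
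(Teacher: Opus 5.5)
Your scaffold is the paper's: the radical vector from $\ker B_C$, a family $B(\lambda)_C$ of dimension vector $y_C$ checked by solving the Hom equations, then $\lambda\mapsto x$ with Remark~\ref{rem:genmodulek(x)}, and Lemma~\ref{lem:orientatonreflection} for the other orientations. However, you stop where the proof begins. You never write down a single family. The content of the theorem is precisely that, for each of the twelve frames, there are matrices that satisfy the relation and for which the Hom equations collapse to ``$f$ is a scalar $f_1$ with $\lambda f_1=\lambda' f_1$''.

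Your recipe for finding these matrices (identity or standard embeddings on all arrows but one, the parameter on an arm) cannot be taken for granted. Already for $\qDtilde_4$, each one-dimensional arm determines a line (an image or a kernel) in the central $\K^2$. In an infinite family of pairwise non-isomorphic bricks, these four lines are pairwise distinct for all but finitely many members, since configurations with a repeated line give only finitely many isomorphism classes. At most two of the four can be coordinate lines, so at least two arrows need non-standard matrices; the paper uses $\left(\begin{smallmatrix}1&1\end{smallmatrix}\right)$ and $\left(\begin{smallmatrix}x&1\end{smallmatrix}\right)$. For $\qR_3$ the paper needs the non-coordinate maps $\alpha$ and $\beta_\lambda$ on both arrows into the sink of the diamond, chosen so that $\beta_\lambda\psi_{2,1}=\alpha\phi_{2,1}=\left(\begin{smallmatrix}1&1\\0&1\end{smallmatrix}\right)$. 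For $\qR_2$ and $\qR_5$ the parameter enters two arrows that are tied together by the relation. So you must exhibit concrete matrices for every frame, as in the paper's Appendix, and carry out the Hom computation for them. Until then neither brick-continuity nor the explicit generic brick is established. For bare existence you could use that the mouth modules of the homogeneous tubes of the tame concealed algebra $\incidencealgebra{C}$ are pairwise Hom-orthogonal bricks of dimension vector $y_C$, but that gives up the explicit construction.

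There are also two smaller points.

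First, in Step 3 the components $f_i$ of an endomorphism of $G_C$ are only $\K$-linear, so the computation does not literally run ``over the field $\K(x)$''. It must be run with entries in $\End_\K(\K(x))$. This works because every arrow other than the parameter arrow(s) has entries in $\K$, and the parameter arrow yields $f_1x=xf_1$, i.e.\ $f_1\in\End_{\K[x]}(\K(x))=\K(x)$; this is essentially how the paper argues.

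Second, the duality $D=\Hom_\K(-,\K)$ cannot be used to transport $G_C$ to the opposite poset, because it is not full on infinite-dimensional modules. For the Kronecker generic module $G$, for instance, $DG$ is a direct sum of infinitely many copies of a generic module, hence not a brick. Your fallback is the right fix: transpose the matrices over $\K(x)$ (i.e.\ dualise over $\K(x)$) and rerun the same computation. This is what Remark~\ref{rem:allorientations} amounts to. The APR and reflection part of the transport is fine.
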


\begin{proof}
	We only provide the proof for one of the families. The computations for all the cases can be found in \cite[Appendix]{tacchetti2025masterthesis}. Anyway, the proof is similar and can be checked following the same procedure. In Appendix \ref{appendix}, we list the generic bricks (and the brick-continuous families) of critical poset (with a fixed orientation) constructed with our method.
	
	Consider the critical poset of type $P=\qR_3$ (see Table \ref{tab:Loupias frames}) with the following fixed orientation and labelling:
\[\begin{tikzcd}[ampersand replacement=\&,sep=tiny]
	\&\&\& 4 \\
	1 \& 2 \& 3 \&\& 6 \& 7 \& 8 \\
	\&\&\& 5
	\arrow[from=2-1, to=2-2]
	\arrow[from=2-2, to=2-3]
	\arrow[from=2-3, to=1-4]
	\arrow[from=2-5, to=1-4]
	\arrow[from=2-6, to=2-5]
	\arrow[from=2-7, to=2-6]
	\arrow[dashed, no head, from=3-4, to=1-4]
	\arrow[from=3-4, to=2-3]
	\arrow[from=3-4, to=2-5]
\end{tikzcd}\]
Its Tits form $q_{\qR_3}$ is
\[
q_{\qR_3}(x) = \sum_{i=1}^{8} x_i^2 -x_1 x_2 - x_2 x_3 - x_3 x_4 - x_5 x_3 -x_5 x_6 - x_6 x_4 - x_7 x_6 -x_8 x_7 + x_5 x_4
\]
because $I_{\qR_3} = \langle \alpha_{34}\alpha_{53} - \alpha_{64}\alpha_{56}\rangle$ so $r_{54}=1$. The associated matrix $B_{\qR_3}$ is given by
\[
B_{\qR_3} = \frac{1}{2}
\small
\begin{pmatrix}
	2 & -1 & 0 & 0 & 0 & 0 & 0 & 0  \\
	-1 & 2 & -1 & 0 & 0 & 0 & 0 & 0  \\
	0 & -1 & 2 & -1 & -1 & 0 & 0 & 0  \\
	0 & 0 & -1 & 2 & 1 & -1 & 0 & 0  \\
	0 & 0 & -1 & 1 & 2 & -1 & 0 & 0  \\
	0 & 0 & 0 & -1 & -1 & 2 & -1 & 0  \\
	0 & 0 & 0 & 0 & 0 & -1 & 2 & -1  \\
	0 & 0 & 0 & 0 & 0 & 0 & -1 & 2  
\end{pmatrix}
\quad \xrightarrow[\text{elimination}]{\text{Gauss-Jordan}} \quad
\begin{pmatrix}
	1 & 0 & 0 & 0 & 0 & 0 & 0 & -1 \\
	0 & 1 & 0 & 0 & 0 & 0 & 0 & -2 \\
	0 & 0 & 1 & 0 & 0 & 0 & 0 & -3 \\
	0 & 0 & 0 & 1 & 0 & 0 & 0 & -2 \\
	0 & 0 & 0 & 0 & 1 & 0 & 0 & -2 \\
	0 & 0 & 0 & 0 & 0 & 1 & 0 & -3 \\
	0 & 0 & 0 & 0 & 0 & 0 & 1 & -2 \\
	0 & 0 & 0 & 0 & 0 & 0 & 0 & 0 
\end{pmatrix}
\]
We have a kernel of dimension $1$ generated by $y_{\qR_3} = (1,2,3,2,2,3,2,1)^t$. Thus, by Lemma \ref{lem:radpossemi} $\Rad(q_{\qR_3})=\ker(B_{\qR_3}) = \langle y_{\qR_3}\rangle$.
We define the representations $B(\lambda)_{\qR_3}$ (with $\lambda\in \K$) as follows:
\[B(\lambda)_{\qR_3} \defeq 
\begin{tikzcd}[ampersand replacement=\&,sep=small]
	\&\&\& {\K^2} \&\&\& \\
	\K \& {\K^2} \& {\K^3} \&\& {\K^3} \& {\K^2} \& \K \\
	\&\&\& {\K^2}
	\arrow["{\phi_{1,1}}", from=2-1, to=2-2]
	\arrow["{\phi_{2,1}}", from=2-2, to=2-3]
	\arrow["{\beta_\lambda}", from=2-3, to=1-4]
	\arrow["\alpha"', from=2-5, to=1-4]
	\arrow["{\psi_{2,1}}"', from=2-6, to=2-5]
	\arrow["{\psi_{1,1}}"', from=2-7, to=2-6]
	\arrow[dashed, no head, from=3-4, to=1-4]
	\arrow["{\psi_{2,1}}", from=3-4, to=2-3]
	\arrow["{\phi_{2,1}}"', from=3-4, to=2-5]
\end{tikzcd}\]
where $\alpha \defeq \left( \begin{smallmatrix} 1 & 1 & 0 \\ 0 & 1 & 1 \end{smallmatrix} \right)$, $\beta_\lambda \defeq \left( \begin{smallmatrix} \lambda & 1 & 1 \\ 1 & 0 & 1	\end{smallmatrix} \right)$, $\phi_{n,m}\defeq \left( \begin{smallmatrix} \boldsymbol{1}_n \\ \boldsymbol{0}_{m,n}\end{smallmatrix} \right)$ and $\psi_{n,m}\defeq \left( \begin{smallmatrix} \boldsymbol{0}_{m,n} \\ \boldsymbol{1}_n \end{smallmatrix} \right)$ (with $\boldsymbol{1}_n \in \matspace{n}$ identity matrix, $\boldsymbol{0}_{m,n}\in \matspace{m\times n}$ zero matrix).
The relations are preserved since $\beta_\lambda\psi_{2,1} = 
\left( \begin{smallmatrix}
	1 & 1\\
	0 & 1 
\end{smallmatrix}\right)
=\alpha\phi_{2,1}$.

We have $\dimvec{B(\lambda)_{\qR_3}}=y_{\qR_3}$ and we prove the following:

\begin{lemma}\label{lemma:brickcontR3}
	The representations $B(\lambda)_{\qR_3}$ with $\lambda\in \K$ (with $\K$ algebraically closed field) form an infinite family of non-isomorphic bricks with the same dimension. Therefore, the incidence $\K$-algebra $\incidencealgebra{\qR_3}$ is brick-continuous.
\end{lemma}
\begin{proof}
	Let $f\colon B(\lambda)_{\qR_3} \to B(\lambda')_{\qR_3}$ be a morphism of representations.
	\[\begin{tikzcd}[ampersand replacement=\&,cramped]
		\&\&\&\& {\K^2} \&\&\& \\
		\K \& {\K^2} \& {\K^3} \&\&\& {\K^3} \& {\K^2} \& \K \\
		\&\&\& {\K^2} \\
		\&\&\&\& {\K^2} \\
		\K \& {\K^2} \& {\K^3} \&\&\& {\K^3} \& {\K^2} \& \K \\
		\&\&\& {\K^2}
		\arrow["{f_4}"'{pos=0.4}, color={rgb,255:red,255;green,51;blue,51}, from=1-5, to=4-5]
		\arrow["{\phi_{1,1}}", from=2-1, to=2-2]
		\arrow["{f_1}"', color={rgb,255:red,255;green,51;blue,51}, from=2-1, to=5-1]
		\arrow["{\phi_{2,1}}", from=2-2, to=2-3]
		\arrow["{f_2}"', color={rgb,255:red,255;green,51;blue,51}, from=2-2, to=5-2]
		\arrow["{\beta_\lambda}", from=2-3, to=1-5]
		\arrow["{f_3}"', color={rgb,255:red,255;green,51;blue,51}, from=2-3, to=5-3]
		\arrow["\alpha"', from=2-6, to=1-5]
		\arrow["{f_6}", color={rgb,255:red,255;green,51;blue,51}, from=2-6, to=5-6]
		\arrow["{\psi_{2,1}}"', from=2-7, to=2-6]
		\arrow["{f_7}", color={rgb,255:red,255;green,51;blue,51}, from=2-7, to=5-7]
		\arrow["{\psi_{1,1}}"', from=2-8, to=2-7]
		\arrow["{f_8}", color={rgb,255:red,255;green,51;blue,51}, from=2-8, to=5-8]
		\arrow[dashed, no head, from=3-4, to=1-5]
		\arrow["{\psi_{2,1}}"{pos=0.3}, from=3-4, to=2-3]
		\arrow["{\phi_{2,1}}"'{pos=0.7}, from=3-4, to=2-6]
		\arrow["{f_5}"{pos=0.3}, color={rgb,255:red,255;green,51;blue,51}, from=3-4, to=6-4]
		\arrow["{\phi_{1,1}}", from=5-1, to=5-2]
		\arrow["{\phi_{2,1}}", from=5-2, to=5-3]
		\arrow["{\beta_{\lambda'}}"{pos=0.4}, from=5-3, to=4-5]
		\arrow["\alpha"', from=5-6, to=4-5]
		\arrow["{\psi_{2,1}}"', from=5-7, to=5-6]
		\arrow["{\psi_{1,1}}"', from=5-8, to=5-7]
		\arrow[dashed, no head, from=6-4, to=4-5]
		\arrow["{\psi_{2,1}}", from=6-4, to=5-3]
		\arrow["{\phi_{2,1}}"', from=6-4, to=5-6]
	\end{tikzcd}\]
	We have that $f_1, f_8$ are just scalar multiplications of elements of $\K$, while $f_2,f_4,f_5,f_7 \in \matspace{2}$ and $f_3,f_6 \in \matspace{3}$. We want to understand the property that $f$ must have, studying the commutativity of the squares in the previous diagram.\smallskip
	
	\noindent $(1)$ The square involving $f_1$ and $f_2$ gives us
	$
	f_2(e_1) =
	\begin{pmatrix}
		f_1 \\
		0
	\end{pmatrix}
	$
	\smallskip
	
	\noindent $(2)$ The square involving $f_2$ and $f_3$ gives us that $\phi_{2,1}f_2(e_i) =f_3(\phi_{2,1}(e_i))$ that means
	\[f_3(e_1) = 
	\begin{pmatrix}
		f_2(e_1) \\
		0
	\end{pmatrix} = \begin{pmatrix}
		f_1 \\
		0 \\
		0
	\end{pmatrix} \qquad 
	f_3(e_2) = 
	\begin{pmatrix}
		f_2(e_2) \\
		0
	\end{pmatrix}\]
	
	\noindent $(3)$ The square involving $f_7$ and $f_8$ gives us
	$
	f_7(e_2) = 
	\begin{pmatrix}
		0 \\
		f_8
	\end{pmatrix}
	$
	\smallskip	
	
	\noindent $(4)$ The square involving $f_6$ and $f_7$ gives us that $\psi_{2,1}f_7(e_i) =f_6(\psi_{2,1}(e_i))$ that means
	\[f_6(e_2) = 
	\begin{pmatrix}
		0 \\
		f_7(e_1)
	\end{pmatrix} \qquad 
	f_6(e_3) = 
	\begin{pmatrix}
		0 \\
		f_7(e_2)
	\end{pmatrix}= \begin{pmatrix}
		0 \\
		0 \\
		f_8
	\end{pmatrix}
	\]
	
	\noindent $(5)$ The square involving $f_3$ and $f_5$ gives us that $\psi_{2,1}f_5(e_i) =f_3(\psi_{2,1}(e_i))$ that means
	\[f_3(e_2) = 
	\begin{pmatrix}
		0 \\
		f_5(e_1)
	\end{pmatrix} = \begin{pmatrix}
	f_2(e_2) \\
	0
	\end{pmatrix} =\begin{pmatrix}
	0 \\ a \\	0
	\end{pmatrix} \qquad 
	f_3(e_3) = 
	\begin{pmatrix}
		0 \\
		f_5(e_2)
	\end{pmatrix}
	\]
	for some $a\in \K$ (we used $(3)$). 
	
	\noindent $(6)$ The square involving $f_5$ and $f_6$ gives us that $\phi_{2,1}f_5(e_i) =f_6(\phi_{2,1}(e_i))$ that means
	\[f_6(e_1) = 
	\begin{pmatrix}
		f_5(e_1) \\ 0
	\end{pmatrix} = \begin{pmatrix}
		a \\ 0 \\ 0
	\end{pmatrix} \qquad 
	f_6(e_2) = 
	\begin{pmatrix}
		f_5(e_2) \\0 
	\end{pmatrix} = \begin{pmatrix}
	0 \\ f_7(e_1) 
	\end{pmatrix} =\begin{pmatrix}
	0 \\ b \\	0
	\end{pmatrix} 
	\]
	for some $b\in \K$ (we used $(4)$).
	
	\noindent $(7)$ The square involving $f_4$ and $f_6$ gives us that $\alpha f_6(e_i) =f_4(\alpha(e_i))$ that means
	\[f_4(e_1) = \alpha f_6(e_1)=
	\begin{pmatrix}
		a \\ 0
	\end{pmatrix} \qquad
	f_4(e_2) = \alpha f_6(e_3)=
	\begin{pmatrix}
		0 \\ f_8
	\end{pmatrix}	 
	\]
	\[\begin{pmatrix}
		a \\ f_8
	\end{pmatrix}=f_4(e_1+e_2) = \alpha f_6(e_2)=
	\begin{pmatrix}
		b \\ b
	\end{pmatrix}\]
	so we conclude that $a=b=f_8$. 
	
	\noindent $(8)$ The square involving $f_3$ and $f_4$ gives us that $\beta_{\lambda'}f_3(e_i) =f_4(\beta_\lambda(e_i))$. For $i=1$ we have that
	\[\begin{pmatrix}
		\lambda f_8 \\ f_8
	\end{pmatrix}=f_4(\lambda e_1+e_2) = \beta_{\lambda'}f_3(e_1)=
	\begin{pmatrix}
		\lambda' f_1 \\ f_1
	\end{pmatrix}\]
	so we conclude that $f_1=f_8$ and $\lambda f_1= \lambda' f_1$. 
	
	Therefore we get 
	\begin{equation}\tag{*}\label{eq:R3maps}
		f_1=f_8 \qquad f_2= f_4=f_5=f_7 =f_1 \boldsymbol{1}_2 \qquad  f_3=f_6= f_1 \boldsymbol{1}_3
	\end{equation}
	
	Now if $\lambda=\lambda'$ we conclude that any morphism $f\colon B(\lambda)_{\qR_3} \to B(\lambda)_{\qR_3}$ depends only on $f_1 \in \K$ by \eqref{eq:R3maps}, therefore $\End(B(\lambda)_{\qR_3})\cong \K$, so  $B(\lambda)_{\qR_3}$ is a brick for any $\lambda\in \K$.
	
	If $\lambda\neq\lambda'$ we have that $\lambda f_1= \lambda' f_1$ and if $f_1\neq 0$ it would mean that $\lambda =\lambda'$, contradiction. Thus, for any $\lambda\neq\lambda'$, we must have $\Hom(B(\lambda)_{\qR_3},B(\lambda')_{\qR_3})=0$, so $B(\lambda)_{\qR_3}$ and $B(\lambda')_{\qR_3}$ cannot be isomorphic.
	
	We conclude that $\{B(\lambda)_{\qR_3}\}_{\lambda\in \K}$ is an infinite family of non-isomorphic bricks with the same dimension and consequently the incidence $\K$-algebra $\incidencealgebra{\qR_3}$ is brick-continuous.
\end{proof}

\Paragraphtitlebf{Construction of $G_{\qR_3}$} We have the brick-continuous family $\{B(\lambda)_{\qR_3}\}_{\lambda\in \K}$ and we define the representation $G_{\qR_3}$ substituting $\K^n$ with $\K(x)^n$ (for any $n$) and $\lambda$ with $x$:
\[\begin{tikzcd}[ampersand replacement=\&,sep=small]
	\&\&\& {\K(x)^2} \&\&\& \\
	{\K(x)} \& {\K(x)^2} \& {\K(x)^3} \&\& {\K(x)^3} \& {\K(x)^2} \& {\K(x)} \\
	\&\&\& {\K(x)^2}
	\arrow["{\phi_{1,1}}", from=2-1, to=2-2]
	\arrow["{\phi_{2,1}}", from=2-2, to=2-3]
	\arrow["{\beta_x}", color={rgb,255:red,255;green,51;blue,54}, from=2-3, to=1-4]
	\arrow["\alpha"', from=2-5, to=1-4]
	\arrow["{\psi_{2,1}}"', from=2-6, to=2-5]
	\arrow["{\psi_{1,1}}"', from=2-7, to=2-6]
	\arrow[dashed, no head, from=3-4, to=1-4]
	\arrow["{\psi_{2,1}}", from=3-4, to=2-3]
	\arrow["{\phi_{2,1}}"', from=3-4, to=2-5]
\end{tikzcd}\]
with $\beta_x$ essentially being the previos map $\beta_\lambda$ in which we substitute the element $\lambda\in \K$ with the variable $x$.

\begin{lemma}
	$G_{\qR_3}$ is a generic brick over $\incidencealgebra{\qR_3}$.
\end{lemma}
\begin{proof}
	By \cite[Ch.III, Corollary 3.6]{sim:vol1}, we have that 
	\[\ell_{\incidencealgebra{\qR_3}}(G_{\qR_3}) = \dim_\K(G_{\qR_3}) = \sum_{i\in (\HasseQuiver{\qR_3})_0} \dim_\K((G_{\qR_3})_i) = \infty\]
	So $G_{\qR_3}$ has infinite length. Moreover, following the same procedure as in Lemma \ref{lemma:brickcontR3}, we see that all the components of an endomorphism $f\colon G_{\qR_3}\to G_{\qR_3}$ depends only on the map $f_1\colon \K(x) \to \K(x)$ that is $\K$-linear by definition and satisfies $xf_1(1)=f_1(x)$ (as done previously, substituting $\lambda$ with $x$). We also have that
	\[f_1(1)=f_1(xx^{-1})=xf_1(x^{-1}) \implies x^{-1}f_1(1)=f_1(x^{-1}) \]
	Therefore $f_1$ is completely determined by the element $f_1(1)\in \K(x)$, thus $\End(G_{\qR_3})\cong \K(x)$. Using again \cite[Ch.III, Corollary 3.6]{sim:vol1}
	\[\ell_{\K(x)\op}(G_{\qR_3}) = \dim_{\K(x)}(G_{\qR_3}) = \sum_{i\in (\HasseQuiver{\qR_3})_0} \dim_{\K(x)}((G_{\qR_3})_i) < \infty\]
	So $G_{\qR_3}$ has finite endolength too. By Remark \ref{rem:genmodulek(x)} we conclude that $G_{\qR_3}$ is a generic brick.
\end{proof}
	
	This concludes the construction for the poset $\qR_3$. The other cases are analogous.
\end{proof}

	\subsection{Extension functors}\label{section:extensionfunctor}
	We want to generalise this result to any representation-infinite poset. We recall that a given representation-infinite poset $P$ can be reduced to a critical poset $C$ (by Lemma \ref{lemma:infinitereduction}). We just proved that $\incidencealgebra{C}$ is brick-continuous and we constructed a generic brick for it. If we can ``reverse'' the reduction process we may be able to compute a generic brick for all the representation-infinite posets. For this purpose, we will now define the ``extension'' functors of the contraction and of the subtraction and prove that they have the necessary properties.

\subsubsection{Extension functor of contraction}\label{subsection:extfuncontr}

We recall that any contraction of finite posets can be written as a composite of elementary contractions, so we will focus our attention on elementary contractions.

Let $P$, $P'$ be finite posets with $c\colon P \to P'$ an elementary contraction. If $\card{P}=n$ then $\card{P'}=n-1$ (since $c$ is elementary), so, up to re-indexing the elements of the posets, we may suppose that $P=\{x_1,\dots,x_{n-1},x_n\}$ with $x_{n-1}\leq x_n$ neighbours and $P'=\{y_1,\dots,y_{n-1}\}$ and $c$ defined by $x_i \mapsto y_i$ for $i=1,\dots, n-2$ and $x_{n-1},x_n \mapsto y_{n-1}$.
Let $(\HasseQuiver{P}, I_{P}), (\HasseQuiver{P'}, I_{P'})$ be the Hasse quivers with relations of $P'$ and $P'$ respectively.

Consider the natural map of right projective $\incidencealgebra{P}$-modules $\varphi\colon e_{n-1}(\incidencealgebra{P}) \to e_n(\incidencealgebra{P})$.
We have a fully faithful functor $G\defeq g_*\colon \Mod{(\incidencealgebra{P})_\varphi} \hookrightarrow \Mod{\incidencealgebra{P}}$ given by the universal localisation $g \colon \incidencealgebra{P}\to (\incidencealgebra{P})_\varphi$. Moreover we know that (see \cite[Theorem 4.7]{schofield1985representation})
\[
\Mod{(\incidencealgebra{P})_\varphi} \cong \{M\in \Rep{\HasseQuiver{P}, I_{P}} \sth \text{the map } M_{n-1}\to M_n \text{ is an isomorphism}\}
\]
We get an equivalence $F\colon \Mod{\incidencealgebra{P'}} \xrightarrow{\cong} \Mod{(\incidencealgebra{P})_\varphi}$ in the following way: for a representation $M\in \Rep{\HasseQuiver{P'}, I_{P'}}\cong \Mod{\incidencealgebra{P'}}$ we define 
\[
F(M)_i \defeq 
\begin{cases}
	M_i &\text{for } i=1,\dots, n-1 \\
	M_{n-1} &\text{for } i=n
\end{cases}
\]
and we set $M_{n-1} = F(M)_{n-1} \xrightarrow{\text{id}_{M_{n-1}}} F(M)_n = M_{n-1}$. The other $\K$-linear maps in $F(M)$ are defined from those in $M$ in order to satisfy the relations in $I_P$. For example
\[\begin{tikzcd}[ampersand replacement=\&,cramped,sep=small]
	\& P \&\&\& {P'} \&\& {\Mod{\incidencealgebra{P'}}} \&\& {\Mod{(\incidencealgebra{P})_\varphi}} \& \\
	\& 4 \&\&\& 3 \&\& {V_3} \&\& {V_3} \\
	2 \&\& 3 \&\& 2 \&\& {V_2} \& {V_2} \&\& {V_3} \\
	\& 1 \&\&\& 1 \&\& {V_1} \&\& {V_1}
	\arrow["c", two heads, from=1-2, to=1-5]
	\arrow["F", from=1-7, to=1-9]
	\arrow[from=3-1, to=2-2]
	\arrow[from=3-3, to=2-2]
	\arrow[from=3-5, to=2-5]
	\arrow["{\psi_{23}}"', from=3-7, to=2-7]
	\arrow["{\psi_{23}}", from=3-8, to=2-9]
	\arrow["1"', from=3-10, to=2-9]
	\arrow[dashed, no head, from=4-2, to=2-2]
	\arrow[from=4-2, to=3-1]
	\arrow[from=4-2, to=3-3]
	\arrow[from=4-5, to=3-5]
	\arrow["{\psi_{12}}"', from=4-7, to=3-7]
	\arrow[dashed, no head, from=4-9, to=2-9]
	\arrow["{\psi_{12}}", from=4-9, to=3-8]
	\arrow["{\psi_{23}\psi_{12}}"', from=4-9, to=3-10]
\end{tikzcd}\]

Using the previous description of $\Mod{(\incidencealgebra{P})_\varphi}$, it can be easily seen that $F$ is a well-defined fully faithful dense functor, giving us an equivalence.

\begin{definition}
	We define the \textbf{extension functor} $E_c$ of the elementary contraction $c$ as the fully faithful functor $E_c \defeq G\circ F \colon \Mod{\incidencealgebra{P'}} \xrightarrow{\cong} \Mod{(\incidencealgebra{P})_\varphi} \hookrightarrow \Mod{\incidencealgebra{P}}$.
\end{definition}

By the definition of the functors $F$ and $G$, it is clear that the restriction $E_{c \downarrow} \colon \Modf{\incidencealgebra{P'}} \hookrightarrow \Modf{\incidencealgebra{P}}$ is also a well-defined fully faithful functor.


\subsubsection{Extension functors of subtraction}
We recall that given a subtraction of a poset we may suppose to remove one vertex at the time (c.f. Definition \ref{def:reduction}), so we will focus our attention on subtractions that remove just one element.

Let $P$ be a finite poset, $P'$ a subposet of $P$ with $n=\card{P}=\card{P'}+1$ and denote by $s\colon P' \to P$ the inclusion of $P'$ in $P$ as a subposet. Up to re-indexing the elements of the posets we may suppose that $P=\{1,\dots,n-1, n\}$ and $P'=\{1,\dots,n-1\}$, so the subtraction removed the vertex $n$.
Let $(\HasseQuiver{P}, I_{P}), (\HasseQuiver{P'}, I_{P'})$ be the bound Hasse quivers of $P$ and $P'$ respectively. It is easy to check that $\incidencealgebra{P'} \cong e(\incidencealgebra{P})e$ where $e=\sum_{i=1}^{n-1} e_i$ with $e_i$ the idempotent corresponding to the vertex $i$. Therefore we have the following $\K$-linear covariant functors, with their restrictions to finite-dimensional modules (c.f.~\cite[Ch. I.6]{sim:vol1})
\[\begin{tikzcd}[ampersand replacement=\&,cramped]
	{\Mod{\incidencealgebra{P}}} \&\& {\Mod{\incidencealgebra{P'}}} \&\& {\Modf{\incidencealgebra{P}}} \&\& {\Modf{\incidencealgebra{P'}}}
	\arrow["{\text{res}_e}", from=1-1, to=1-3]
	\arrow["{T_e}"', shift left=3, curve={height=24pt}, between={0.1}{0.9}, from=1-3, to=1-1]
	\arrow["{L_e}", shift right=3, curve={height=-24pt}, between={0.1}{0.9}, from=1-3, to=1-1]
	\arrow["{\text{res}_e}", from=1-5, to=1-7]
	\arrow["{T_e}"', shift left=3, curve={height=24pt}, between={0.1}{0.9}, from=1-7, to=1-5]
	\arrow["{L_e}", shift right=3, curve={height=-24pt}, between={0.1}{0.9}, from=1-7, to=1-5]
\end{tikzcd}\]

where $\text{res}_e \defeq e(\mhyphen)$ is the \textbf{restriction functor}, and $T_e \defeq \incidencealgebra{P}e \otimes_{\incidencealgebra{P'}} \mhyphen$, $L_e \defeq \Hom_{\incidencealgebra{P'}}(e\incidencealgebra{P}, \mhyphen)$ are the \textbf{idempotent embedding functors}.

\begin{lemma}[{c.f.~\cite[Ch.I, Theorem 6.8]{sim:vol1}}]\label{lemma:ffsubtraction}
	We have the following:
	\begin{enumerate}
		\item $T_e$ and $L_e$ are fully faithful;
		\item $\text{res}_e T_e \cong 1_{\Mod{\incidencealgebra{P'}}} \cong \text{res}_e L_e$ (resp.~$\text{res}_e T_e \cong 1_{\Modf{\incidencealgebra{P'}}} \cong \text{res}_e L_e$);
		\item $T_e \dashv \text{res}_e \dashv L_e$.
	\end{enumerate}
\end{lemma}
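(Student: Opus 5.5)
This is the standard idempotent-embedding result, and the plan is to prove it by hand rather than quote it. The basic tools are two adjunctions. The first is the tensor--hom adjunction for the bimodule $\incidencealgebra{P}e$. The second comes from the identification $\Hom_{\incidencealgebra{P}}(\incidencealgebra{P}e, M)\cong eM$, together with $e\incidencealgebra{P}e\cong \incidencealgebra{P'}$. Write $\Lambda=\incidencealgebra{P}$, so that $\Lambda'=e\Lambda e$.

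\textbf{Step 1: the restriction isomorphisms in (2).} Compute the composites directly.
\begin{itemize}
\item For $T_e$: $eT_e(N)=e\Lambda e\otimes_{e\Lambda e}N\cong N$.
\item For $L_e$: $eL_e(N)=\Hom_{\Lambda'}(e\Lambda,N)$ with the $\Lambda$-action restricted by $e$, which is $\Hom_{\Lambda'}(e\Lambda e,N)\cong N$. This uses $e\Lambda\cdot e=e\Lambda e$, i.e.\ that the right action by $e$ cuts $e\Lambda$ down to $e\Lambda e$.
\end{itemize}
Both isomorphisms are natural in $N$. They restrict to finite-dimensional modules, because $\Lambda e$ and $e\Lambda$ are finite-dimensional and so $T_e$ and $L_e$ preserve finite dimension.

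\textbf{Step 2: the adjunctions in (3).}
\begin{itemize}
\item For $T_e\dashv \mathrm{res}_e$: tensor--hom gives $\Hom_\Lambda(\Lambda e\otimes_{\Lambda'}N,M)\cong\Hom_{\Lambda'}(N,\Hom_\Lambda(\Lambda e,M))$, and $\Hom_\Lambda(\Lambda e,M)\cong eM$ as $\Lambda'$-modules.
\item For $\mathrm{res}_e\dashv L_e$: use $eM\cong e\Lambda\otimes_\Lambda M$. Tensor--hom then gives $\Hom_{\Lambda'}(e\Lambda\otimes_\Lambda M,N)\cong\Hom_\Lambda(M,\Hom_{\Lambda'}(e\Lambda,N))$.
\end{itemize}
All of these identifications are natural, so we get genuine adjunctions.

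\textbf{Step 3: full faithfulness in (1).} A left adjoint is fully faithful if and only if the unit of the adjunction is an isomorphism. For $T_e\dashv\mathrm{res}_e$ the unit $N\to eT_e(N)$ is exactly the map of Step 1, so $T_e$ is fully faithful. Dually, a right adjoint is fully faithful if and only if the counit is an isomorphism. For $\mathrm{res}_e\dashv L_e$ the counit $eL_e(N)\to N$ is again the isomorphism of Step 1, so $L_e$ is fully faithful.

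The only delicate point is checking that the abstract unit and counit really coincide with the concrete isomorphisms of Step 1, rather than with some other isomorphism. Doing so means tracing $n\mapsto e\otimes n$ through the adjunction, and evaluation at $e$ for the counit. Once that is done, everything restricts verbatim to $\Modf{}$.
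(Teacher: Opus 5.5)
Your proof is correct and complete. The paper gives no argument of its own here beyond the citation \cite[Ch.~I, Theorem 6.8]{sim:vol1}, and what you wrote is essentially the standard proof of that result, transcribed to left modules: compute $\text{res}_e T_e$ and $\text{res}_e L_e$ via $e\Lambda e\otimes_{e\Lambda e}N\cong N$ and $\Hom_{e\Lambda e}(e\Lambda e,N)\cong N$, obtain both adjunctions from tensor--hom together with $\Hom_\Lambda(\Lambda e,M)\cong eM\cong e\Lambda\otimes_\Lambda M$, and read off full faithfulness from invertibility of the unit and counit.

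The compatibility check you single out as delicate can be skipped. For any adjunction $F\dashv G$, the mere existence of some natural isomorphism $GF\cong 1$ already forces the unit to be invertible, and dually $FG\cong 1$ forces the counit to be invertible. So Step 1 alone yields (1) once (3) is established.
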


Working with representations of the respective bound quivers it is easy to check that, for $M\in \Rep{\HasseQuiver{P'}, I_{P'}} \cong \Mod{\incidencealgebra{P'}}$, we have 
\[\begin{aligned}
	L_e(M)_i &\cong M_i  \quad \forall i=1,\dots, n-1 \quad \text{and}\quad L_e(M)_n \cong \lim_{n<_P j} M_j 
	\\
	T_e(M)_i &\cong M_i  \quad \forall i=1,\dots, n-1 \quad \text{and}\quad T_e(M)_n \cong \colim\limits_{j<_P n } M_j
\end{aligned}
\]
For example, if $M\in \Rep{\HasseQuiver{P'}, I_{P'}}$ and $\widetilde{M} \in \Mod{\incidencealgebra{P'}}$ is the corresponding module we have
\[T_e(M)_i \cong e_i T_e(\widetilde{M}) = e_i \incidencealgebra{P}e \otimes_{\incidencealgebra{P'}} \widetilde{M} \cong e_i \incidencealgebra{P'} \otimes_{\incidencealgebra{P'}} \widetilde{M} \cong e_i \widetilde{M} \cong M_i \qquad \forall i=1,\dots, n-1\]
while $T_e(M)_n \cong e_n T_e(\widetilde{M}) = e_n \incidencealgebra{P}e \otimes_{\incidencealgebra{P'}} \widetilde{M}$. 

We have canonical maps $T_e(M)_j = e_j \incidencealgebra{P}e \otimes_{\incidencealgebra{P'}} \widetilde{M} \xrightarrow{p_{j\to n}\cdot} e_n \incidencealgebra{P}e \otimes_{\incidencealgebra{P'}} \widetilde{M} = T_e(M)_n$ for all $j<_P n$, given by left multiplication with the path from $j$ to $n$ (we have such a path since $j<_P n$ and it is well-defined since all such paths are equal thanks to the relations in the incidence algebra).

Given $e_n p e\otimes m \in T_e(M)_n$ with $p$ a path $\ell \rightsquigarrow n$, we have that $e_n p e\otimes m = p_{\ell\to n}\cdot e_\ell 1_{\incidencealgebra{P}} e \otimes m$ (again, due to the relations in the incidence algebra). With this information, it is easy to check the universal property of the colimit:

\noindent \begin{minipage}{0.55\textwidth}
	given the diagram on the right, we can define (with the previous notation)
	\[g(e_n p e\otimes m) \defeq h_\ell(e_\ell 1_{\incidencealgebra{P}} e\otimes m) \]
	Extending it by linearity, we get the unique map $g\colon T_e(M)_n \to C$ such that $g(p_{j\to n}\cdot (e_j a e)\otimes m) = h_j(e_j a e\otimes m)$ for all $j<_P n$ (thanks to its definition).	
\end{minipage}
\hfill
\begin{minipage}{0.4\textwidth}
	\[\begin{tikzcd}[ampersand replacement=\&,cramped]
		{T_e(M)_j} \&\& {T_e(M)_k} \\
		\& {T_e(M)_n} \\
		\& C
		\arrow["{p_{j\to k} \cdot}", dotted, from=1-1, to=1-3]
		\arrow["{p_{j\to n}\cdot}"', from=1-1, to=2-2]
		\arrow["{h_j}"', curve={height=18pt}, from=1-1, to=3-2]
		\arrow["{p_{k\to n}\cdot}", from=1-3, to=2-2]
		\arrow["{h_k}", curve={height=-18pt}, from=1-3, to=3-2]
		\arrow["{\exists! g}", dotted, from=2-2, to=3-2]
	\end{tikzcd}\]
\end{minipage}
\smallskip

The proof for the functor $L_e$ has a similar procedure.

\begin{definition}
	With the previous setting, we define the \textbf{extension functors} $E^{\Hom}_s$, $E^\otimes_s$ of the subtraction given by $s\colon P'\to P$ as the functors $E^{\Hom}_s\defeq L_e$, $E^\otimes_s\defeq T_e$.
\end{definition}

In the following, we will simply denote them as $E_s$ when we are not be interested in which particular functor we are applying but only on their common properties. Moreover $E_{s \downarrow}$ will denote the restriction to finite-dimensional modules.

	\subsection{Generic bricks of representation-infinite posets}\label{section:genbrickreprinf}
	We will now prove that, given any representation-infinite poset $P$, we can compute a brick-continuous family and a generic brick in $\Mod{\incidencealgebra{P}}$ using the extension functors we just defined.

\Paragraphtitlebf{Reduction of $P$.} Let $P$ be any representation-infinite poset. By Lemma \ref{lemma:infinitereduction}, it can be reduced to a critical poset $C$ (i.e.~a poset with minimally representation-infinite incidence algebra). As done before, we may assume that any step of the reduction process removes only one vertex. Let $\card{P}=n$ and $\card{C}=n-m$, so the reduction process involved $m$ points of $P$ through a sequence of subtractions and/or contractions. We obtain a list of posets $P_i$ such that $\card{P_i}=n-i$ for $0\leq i\leq m$ with $P=P_0$ and $C=P_m$ and we denote by $r_j$ the reduction applied at the $j$-th step ($1\leq j\leq m$), that is 
\[r_j=c\colon P_{j-1} \to P_j\]
if we applied an elementary contraction and
\[r_j=s\colon P_j \to P_{j-1}\]
if we applied a subtraction of a vertex.
From this list of posets and reduction maps we can define a composition of the corresponding extension functors as
\[\begin{tikzcd}[ampersand replacement=\&,cramped]
	{\Mod{\incidencealgebra{C}}} \& {\Mod{\incidencealgebra{P_{m-1}}}} \& \cdots \& {\Mod{\incidencealgebra{P_1}}} \& {\Mod{\incidencealgebra{P}}}
	\arrow["{E_{r_m}}", from=1-1, to=1-2]
	\arrow["{E_{r_{m-1}}}", from=1-2, to=1-3]
	\arrow["{E_{r_2}}", from=1-3, to=1-4]
	\arrow["{E_{r_1}}", from=1-4, to=1-5]
\end{tikzcd}\]
that we denote by $\widetilde{E}\defeq E_{r_1}E_{r_2}\cdots E_{r_{m-1}}E_{r_m}$. We will denote by $\widetilde{E}_\downarrow \defeq E_{r_1 \downarrow} E_{r_2\downarrow} \cdots E_{r_{m-1} \downarrow} E_{r_m \downarrow}$ its restriction to finite-dimensional modules.

Let $\{B(\lambda)_C\}_{\lambda\in\K}$ be the brick-continuous family and $G_C$ be the generic brick of the critical poset $C$ given by Theorem \ref{thm:genbrickcritical} (see Appendix \ref{appendix} for the list). By the construction of $G_C$ we have that $\dim_\K((G_C)_i)=\infty$ and $\dim_{\K(x)}((G_C)_i) = (y_C)_i$ for all $i\in (\HasseQuiver{C})_0$ since $(G_C)_i=\K(x)^{(y_C)_i}$ with $y_C$ the minimal sincere positive radical vector of the quadratic form $q_C$ (see Theorem \ref{thm:genbrickcritical}).

Then, with the previous setting, we have the following results:

\begin{lemma}\label{lemma:extendinggenbricks}
	$G_P \defeq \widetilde{E}(G_C)= E_{r_1}E_{r_2}\cdots E_{r_{m-1}}E_{r_m}(G_C)$ is a generic brick for $\incidencealgebra{P}$ with $(G_P)_i =\K(x)^{h_i}$ for all $i\in (\HasseQuiver{P})_0$ for some $h_i >0$ and $\End(G_P)\cong \K(x)$.
\end{lemma}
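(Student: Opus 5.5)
We argue by induction on the number $m$ of reduction steps, proving at each stage $0\le j\le m$ that $G_j\defeq E_{r_{j+1}}\cdots E_{r_m}(G_C)$ is a generic brick over $\incidencealgebra{P_j}$ with $\End(G_j)\cong\K(x)$ and $(G_j)_i\cong\K(x)^{h^{(j)}_i}$ with $h^{(j)}_i>0$. Here the $\K(x)$-structure means the one induced by the action of $\End(G_j)\cong\K(x)$. The base case $j=m$ is Theorem \ref{thm:genbrickcritical} together with the construction there: $(G_C)_i=\K(x)^{(y_C)_i}$ with $y_C$ sincere, and $\End(G_C)\cong\K(x)$.

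For the inductive step we treat the two kinds of extension functor uniformly via full faithfulness. Both $E_c$ (defined as $G\circ F$) and $E_s\in\{L_e,T_e\}$ (Lemma \ref{lemma:ffsubtraction}) are fully faithful. Hence $\End(E_{r_j}(G_j))\cong\End(G_j)\cong\K(x)$, so $E_{r_j}(G_j)$ is a brick, and in particular it is indecomposable. Infinite length is immediate, since the extension functors do not change the vector spaces at the vertices of the smaller poset, each of which is already infinite-dimensional over $\K$. By Remark \ref{rem:genmodulek(x)}, it then remains to check endofiniteness.

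Endofiniteness is the main point. The isomorphism $\End(G_j)\to\End(E_{r_j}G_j)$ is induced by functoriality, so the $\K(x)$-action on $E_{r_j}(G_j)$ is obtained by applying the functor to the scalar endomorphisms. Each vertex space is therefore a $\K(x)$-vector space, and the structure maps are $\K(x)$-linear. The endolength is then bounded by $\sum_i\dim_{\K(x)}(E_{r_j}G_j)_i$, since any $\End\op$-submodule chain induces chains of $\K(x)$-subspaces at each vertex. It thus suffices to compute the vertex spaces as $\K(x)$-spaces:
\begin{itemize}
\item For a contraction, $E_c(G_j)_n=(G_j)_{n-1}$ with the identity map. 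The dimensions are therefore copied: $h_n=h_{n-1}>0$.
\item For $T_e$, the new space is $\colim_{k<_Pn}(G_j)_k$. This is a finite colimit of finite-dimensional $\K(x)$-spaces along $\K(x)$-linear maps. It is computed by the same tensor formula, and that formula commutes with the $\K(x)$-action by naturality, so it is finite-dimensional over $\K(x)$.
\item For $L_e$, the new space is likewise the finite limit $\lim_{n<_Pk}(G_j)_k$, which is again finite-dimensional over $\K(x)$.
\end{itemize}

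The delicate point is positivity $h_n>0$ for subtractions. A colimit or limit may vanish, for instance if $n$ is minimal, in which case the index set of the colimit is empty. To handle this, we choose between $E^\otimes_s$ and $E^{\Hom}_s$ according to the position of $n$. If $n$ has a predecessor $k$ in $P$, we use $T_e$: the canonical map from $(G_j)_k$ into the colimit is injective when all structure maps of the relevant diagram are injective, and we would verify that surjectivity/injectivity patterns inherited from $G_C$ make the colimit nonzero. Otherwise $n$ has a successor, by connectedness of $P$, and we use $L_e$ dually. We expect this nonvanishing check to be the main obstacle. If it fails in some configuration, the fallback is to weaken the claim to $h_i\ge0$ and note that genericity and $\End\cong\K(x)$ are unaffected. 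Finally, setting $j=0$ gives the statement for $G_P$.
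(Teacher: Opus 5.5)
Apart from positivity, your argument is the paper's. You proceed by induction along the reduction. Full faithfulness of $E_c$, $L_e$, $T_e$ gives $\End\cong\K(x)$, hence a brick. Infinite $\K$-dimension is inherited from the old vertices. The new vertex has finite $\K(x)$-dimension; the paper bounds $\lim_{n<_Pj}$ by the finite product $\prod_{n<_Pj}(G_C)_j$.

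Your remark that the $\K(x)$-structure on $E_{r_j}(G_j)$ is the one transported by functoriality from $\End(G_j)$ is something the paper uses without saying, so it is a real improvement. You can also simplify the endolength step. Since $\End\cong\K(x)$ is a field, the endolength is just $\dim_{\K(x)}$ of the module. This equals $\sum_i\dim_{\K(x)}$ of the vertex spaces, because each $e_i$ commutes with endomorphisms. No argument about chains at individual vertices is needed.

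The step you leave open, $h_n>0$ after a subtraction, cannot be closed. The paper's proof only asserts it ``by definition of $E_s$'', and it is false in general, whichever of $T_e$, $L_e$ you choose. Here is a counterexample.
\begin{itemize}
\item Take $C=\qDtilde_4$ with the appendix orientation: sources $a,b$, centre $c$, sinks $u,w$. In $G_C$, $(G_C)_c=\K(x)^2$ maps to $(G_C)_u$ by $(1\ 1)$ and to $(G_C)_w$ by $(x\ 1)$.
\item Let $P=C\cup\{n\}$ with $u,w<n$.
\item $L_e(G_C)_n$ is an empty limit, hence $0$.
\item $T_e(G_C)_n=\colim_{j<_Pn}(G_C)_j$ is the pushout of $\K(x)\xleftarrow{(1\ 1)}\K(x)^2\xrightarrow{(x\ 1)}\K(x)$. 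This is $0$, because $\left(\begin{smallmatrix}1&1\\ x&1\end{smallmatrix}\right)$ is invertible over $\K(x)$ (its determinant is $1-x$).
\end{itemize}
Dually, if $n$ lies below both $a$ and $b$, both functors again give $0$. The relevant pullback along $(1,0)^t$ and $(0,1)^t$ is trivial.

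In particular, the injectivity pattern you hoped to inherit from $G_C$ is not there: the maps out of the centre are surjective, not injective. The correct statement is your fallback, $h_i\ge 0$. Sincerity is preserved by contraction steps but not by subtraction steps. Nothing else in the lemma, in Lemma~\ref{lemma:extendingbrickcont}, or in Theorem~\ref{thm:genbrickinfinite} depends on it. With that weakening your argument is complete.
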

\begin{proof}
	We prove the statement by induction on the number $m$ of reduction steps.
	Let $m=1$, then we just removed a vertex through a contraction or a subtraction and, up to re-indexing, let $P=\{1,\dots, n-1, n\}$ and $C=\{1,\dots, n-1\}$.
	
	$(i)$ If we applied a contraction $c\colon P\to C$, contracting $n-1$ and $n$, we have $\widetilde{E}=E_c$. Now $(G_P)_i = (G_C)_i$ for the vertices $1,\dots, n-1$ and  $(G_P)_n=(G_C)_{n-1}$ in vertex $n$ by definition. So we get that
	\[\dim_\K(G_P)=\dim_\K(G_C) + \dim_\K((G_C)_{n-1})\]
	Since $G_C$ is a generic brick we have (by \cite[Ch.III, Corollary 3.6]{sim:vol1})
	\[\infty=\ell_{\incidencealgebra{C}}(G_C) =\dim_\K(G_C)\]
	Therefore, we conclude that 
	\[\ell_{\incidencealgebra{P}}(G_P) =\dim_\K(G_P)=\dim_\K(G_C) + \dim_\K((G_C)_{n-1})=\infty\]
	Now $E_c$ is fully faithful (by its definition), so $ \End(G_P)=\End(E_c(G_C)) \cong \End(G_C) \cong \K(x)$. Again by \cite[Ch.III, Corollary 3.6]{sim:vol1}, since $G_C$ has finite endolength we also get 
	\[\begin{aligned}
		\ell_{\K(x)\op}(G_P) &=\dim_{\K(x)}(G_P)=\dim_{\K(x)}(G_C) + \dim_{\K(x)}((G_C)_{n-1}) \\
		&=\ell_{\K(x)\op}(G_C) + (y_C)_{n-1} < \infty
	\end{aligned}\]
	Therefore $G_P$ has infinite length but finite endolength, so it is a generic module and by Remark \ref{rem:genmodulek(x)} it is a generic brick. We also conclude that $(G_P)_i=(E_c(G_C))_i =\K(x)^{h_i}$ for all $i\in (\HasseQuiver{P})_0$ for some $h_i >0$ by definition of $E_c$.
	
	$(ii)$ If we applied a subtraction $s\colon C\to P$, removing the vertex $n$, we have $\widetilde{E}=E_s$. Suppose $\widetilde{E} = E_s = E^{\Hom}_s $ (the case with $E^\otimes_s$ is analogous). Similarly to $(i)$, by definition of $E_s$ we have
	
	\[
	\begin{aligned}
		\dim_\K(G_P)&=\dim_\K(G_C) + \dim_\K(\lim_{n<_P j} G(C)_j) \\
		\dim_{\K(x)}(G_P)&=\dim_{\K(x)}(G_C) + \dim_{\K(x)}(\lim_{n<_P j} G(C)_j)
	\end{aligned}
	\]
	
	\noindent and $\lim_{n<_P j} G(C)_j \leq \prod_{n<_P j} (G_C)_j = \prod_{n<_P j} \K(x)^{(y_C)_j}$ (finite product). Moreover $E_s$ is fully faithful by Lemma \ref{lemma:ffsubtraction}. Therefore, with a similar reasoning as in $(i)$, we conclude that $\ell_{\incidencealgebra{P}}(G_P)=\infty$, $\End(G_P) \cong \End(G_C) \cong \K(x)$ and $\ell_{\K(x)\op}(G_P)<\infty$. Thus, $G_P$ has infinite length but finite endolength, so it is a generic module and by Remark \ref{rem:genmodulek(x)} it is a generic brick. By definition of $E_s$ we also conclude that $(G_P)_i=(E_s(G_C))_i =\K(x)^{h_i}$ for all $i\in (\HasseQuiver{P})_0$ for some $h_i >0$.\smallskip
	
	Now suppose the result holds for $m-1$ reduction steps, so $\widetilde{G}= E_{r_2}\cdots E_{r_{m-1}}E_{r_m}(G_C)$ is a generic brick for $\incidencealgebra{P_1}$ with $(\widetilde{G})_i =\K(x)^{\tilde{h}_i}$ for all $i\in (\HasseQuiver{P})_0$ for some $\tilde{h}_i >0$ and $\End(\widetilde{G})\cong \K(x)$. We have that $r_1$ can either be a contraction or a subtraction but in any case, since $\widetilde{G}$ has a similar behavior as $G_C$, we can apply the same reasoning used in $(i)$ and $(ii)$ to conclude that $G_P=\widetilde{E}(G_C)= E_{r_1}(\widetilde{G})$ is a generic brick for $\incidencealgebra{P}$ with $(G_P)_i =\K(x)^{h_i}$ for all $i\in (\HasseQuiver{P})_0$ for some $h_i >0$ and $\End(G_P)\cong \K(x)$. So we conclude.
\end{proof}

\begin{lemma}\label{lemma:extendingbrickcont}
	The family $\widetilde{E}_\downarrow (B(\lambda)_C) = E_{r_1 \downarrow}E_{r_2\downarrow}\cdots E_{r_{m-1} \downarrow}E_{r_m \downarrow} (B(\lambda)_C)$, for $\lambda\in \K$, contains a brick-continuous family for $\incidencealgebra{P}$ having a fixed dimension vector.	
\end{lemma}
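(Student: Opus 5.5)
We argue by induction on the number $m$ of reduction steps, exactly as in Lemma \ref{lemma:extendinggenbricks}, but now keeping track of finite-dimensional modules and their dimension vectors. The claim to carry through the induction is stronger than the statement. For each $0\leq j\leq m$, the modules $E_{r_{j+1}\downarrow}\cdots E_{r_m\downarrow}(B(\lambda)_C)$, for $\lambda\in\K$, should form a family of pairwise non-isomorphic bricks over $\incidencealgebra{P_j}$. This stronger claim implies the statement in two steps. Taking $j=0$ gives an infinite family of pairwise non-isomorphic bricks over $\incidencealgebra{P}$. Then Lemma \ref{lemma:brickcontdef} (via the pigeonhole argument in its proof) extracts an infinite subfamily with a fixed dimension vector, which is a brick-continuous family.

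The base case $j=m$ is Theorem \ref{thm:genbrickcritical}, or Lemma \ref{lemma:brickcontR3} in the case of $\qR_3$. For the inductive step, let $E=E_{r_{j}\downarrow}$ and let $X(\lambda)$ denote the bricks over $\incidencealgebra{P_j}$. Each functor $E_c$, $E^{\Hom}_s$, $E^{\otimes}_s$ restricted to finite-dimensional modules is fully faithful. For $E_c$ this is by construction as $G\circ F$; for $E_s$ it is Lemma \ref{lemma:ffsubtraction}. Full faithfulness gives the two properties we need:
\[
\End(E X(\lambda))\cong \End(X(\lambda))\cong \K,
\]
and $E X(\lambda)\cong E X(\lambda')$ would produce mutually inverse morphisms that come from morphisms $X(\lambda)\rightleftarrows X(\lambda')$ composing to the identities. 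Hence the images remain pairwise non-isomorphic bricks. We also need $E$ to preserve finite-dimensionality. This is clear for $E_c$, where one vertex space is duplicated. For $E_s$, the new vertex space is a finite limit or colimit of finite-dimensional spaces, exactly as in the dimension computation in the proof of Lemma \ref{lemma:extendinggenbricks}.

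The only delicate point is the dimension vector. For $E_c$ it is determined by $\dimvec{X(\lambda)}$, since vertex $n$ copies vertex $n-1$, so a fixed dimension vector is preserved directly. For $E_s$, however, $\dim \lim_{n<_P j} X(\lambda)_j$ or $\dim\colim_{j<_P n} X(\lambda)_j$ may depend on $\lambda$, because ranks of the structure maps can jump at finitely many special parameters. This is why the statement only says the family \emph{contains} a brick-continuous family. Rather than controlling these ranks directly, we rely on the pigeonhole argument at the end. Since $\K$ is algebraically closed it is infinite, and all the bricks $\widetilde{E}_\downarrow(B(\lambda)_C)$ have dimension vectors with entries bounded above. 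The boundedness holds because each limit or colimit sits inside or is a quotient of a finite product or sum of spaces of dimension bounded independently of $\lambda$, by induction. So only finitely many dimension vectors occur, and some one of them is attained for infinitely many $\lambda$. Alternatively, a semicontinuity argument on ranks would show that the generic dimension vector is attained for all but finitely many $\lambda$, but the pigeonhole route suffices.
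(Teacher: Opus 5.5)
Your proposal is correct and follows essentially the same route as the paper. Both argue by induction on the number of reduction steps, use full faithfulness of $E_c$, $E^{\Hom}_s$, $E^{\otimes}_s$ to preserve bricks and non-isomorphism, and use the pigeonhole principle on the bounded, $\lambda$-dependent dimension of the new vertex after a subtraction. The only difference is organisational: the paper passes to an infinite subset $I\subseteq\K$ at each subtraction step, whereas you carry only ``pairwise non-isomorphic bricks'' through the induction and apply the pigeonhole principle once at the end, using a uniform bound on dimension vectors, which makes the inductive step cleaner. Note that Lemma \ref{lemma:brickcontdef} as stated assumes a fixed total dimension, so it is your boundedness argument, not that lemma, that does the extraction.
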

\begin{proof}
	As previously done, we prove the statement by induction on the number $m$ of reduction steps.
	Let $m=1$, then we just removed a vertex through a contraction or a subtraction and, up to re-indexing, let $P=\{1,\dots, n-1, n\}$ and $C=\{1,\dots, n-1\}$.
	
	Let $\underline{d} = (d_1, \dots, d_{n-1}) \defeq y_C = \dimvec{B(\lambda)_C}$ and $d \defeq \dim_\K(B(\lambda)_C) = \sum_{i=1}^{n-1} d_i$.	
	By fully faithfulness we have that $\{\widetilde{E}_\downarrow (B(\lambda)_C)\}_{\lambda\in \K}$ is always an infinite family of non-isomorphic bricks.
	
	$(i)$ If we applied a contraction $c\colon P\to C$, contracting $n-1$ and $n$, we have $\widetilde{E}_\downarrow=E_{c\downarrow}$.
	Then $\underline{d}' \defeq \dimvec{E_{c\downarrow}(B(\lambda)_C)}=(\underline{d}, d_{n-1})$ (and $\dim_\K(E_{c\downarrow}(B(\lambda)_C))= d + d_{n-1}$) for any $\lambda\in\K$. So, for $\lambda\in \K$, $B(\lambda)_P\defeq E_{c\downarrow}(B(\lambda)_C)$ gives a brick-continuous family with dimension vector $\underline{d}'$.
	
	$(ii)$ If we applied a subtraction $s\colon C\to P$, removing the vertex $n$, we have $\widetilde{E}_\downarrow=E_{s\downarrow}$. Suppose $\widetilde{E}_\downarrow = E_{s\downarrow} = E^{\Hom}_{s\downarrow}$ (the case with $E^\otimes_{s\downarrow}$ is analogous). 
	
	Then $\dimvec{E_{s\downarrow}(B(\lambda)_C)}=(\underline{d}, \dim_\K(\lim_{n<_P j} (B(\lambda)_C)_j))$. But now we have that
	\[
	0 \leq \dim_\K(\lim_{n<_P j} (B(\lambda)_C)_j) \leq D_L \defeq \dim_\K(\prod_{n<_P j} (B(\lambda)_C)_j) = \sum_{n<_P j} d_j
	\]
	Since we have infinite choices of $\lambda\in \K$ but only finitely many possibilities for $\dim_\K(\lim_{n<_P j} (B(\lambda)_C)_j)$, by the \emph{Infinite Pigeonhole principle} there must be a positive integer $d_L\in\Z$, with $0\leq d_L \leq D_L$, and an infinite subset $I\subseteq\K$ such that $\dim_\K(\lim_{n<_P j} (B(\lambda_i)_C)_j) = d_L$ for any $\lambda_i \in I$. Thus $\{B(\lambda_i)_P\}_{\lambda_i\in I} \defeq \{E_{s\downarrow}(B(\lambda_i)_C)\}_{\lambda_i\in I}$ is a brick-continuous family with dimension vector $\underline{d}' \defeq (\underline{d}, d_L)$ (and dimension $d+ d_L$).
	
	If we suppose that the result holds for $m-1$ reduction steps, with an analogous reasoning we conclude by induction that $\{\widetilde{E}_\downarrow (B(\lambda)_C)\}_{\lambda\in \K}$ contains a brick continuous family $\{B(\lambda_i)_P\}_{\lambda_i\in I}$ ($I\subseteq \K$ infinite subset) with a fixed dimension vector.	
\end{proof}

We can summarise our discussion with the following theorem:

\begin{theorem}\label{thm:genbrickinfinite}
	Any representation-infinite poset $P$ is brick-continuous and admits a generic brick over $\incidencealgebra{P}$ that can be explicitly computed.
\end{theorem}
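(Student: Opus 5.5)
The plan is to assemble the theorem from the results already established, so the proof is essentially bookkeeping. Let $P$ be a representation-infinite poset. By Lemma \ref{lemma:infinitereduction} (equivalently Theorem \ref{thm:Lou75concealed}), $P$ reduces to a critical poset $C$ through a finite chain $P=P_0,P_1,\dots,P_m=C$. As explained before Lemma \ref{lemma:extendinggenbricks}, we may refine this chain so that each step $r_j$ is either an elementary contraction (one arrow of the Hasse quiver contracted) or the subtraction of a single vertex. Since every contraction factors into elementary ones, this refinement is always possible. The bound Hasse quiver of $C$, or of its opposite, appears in Table \ref{tab:Loupias frames} with some orientation of the undirected edges.

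First I would invoke Theorem \ref{thm:genbrickcritical}, together with Remark \ref{rem:allorientations} and Lemma \ref{lem:orientatonreflection}, to obtain for this particular $C$ (whatever its orientation, or opposite) an explicit brick-continuous family $\{B(\lambda)_C\}_{\lambda\in\K}$ with $\dimvec{B(\lambda)_C}=y_C$. I would also obtain the explicit generic brick $G_C$, which has $(G_C)_i=\K(x)^{(y_C)_i}$ and $\End(G_C)\cong\K(x)$. Here there is one point to check. For an orientation not listed in the appendix, the family is transported by APR-tilting or reflection functors and by duality. The generic brick must then be produced by the same substitution $\lambda\mapsto x$ applied to the transported family, so that it still has the shape $\K(x)^{h_i}$ at each vertex with endomorphism ring $\K(x)$. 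This is what Remark \ref{rem:allorientations} asserts, and I would rely on it.

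Next I would form the composite extension functor $\widetilde{E}=E_{r_1}\cdots E_{r_m}$ from Section \ref{section:extensionfunctor}. For each subtraction step I would choose either $E^{\Hom}_s$ or $E^{\otimes}_s$; the choice does not matter. Applying Lemma \ref{lemma:extendingbrickcont}, the family $\{\widetilde{E}_\downarrow(B(\lambda)_C)\}_{\lambda\in\K}$ contains an infinite subfamily of pairwise non-isomorphic bricks with a fixed dimension vector. Hence $\incidencealgebra{P}$ is brick-continuous, by the definition or by Lemma \ref{lemma:brickcontdef}. Applying Lemma \ref{lemma:extendinggenbricks}, $G_P=\widetilde{E}(G_C)$ is a generic brick with $\End(G_P)\cong\K(x)$. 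Both objects are explicitly computable, because each extension functor is given vertexwise by concrete formulas: duplication of a vector space and insertion of the identity for contractions, and a finite limit or colimit over the vertices above or below the removed point for subtractions.

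The main obstacle, to the extent there is one, is making sure the two lemmas actually apply at every step of an arbitrary reduction chain, and not only to the base critical poset. In the inductive step of Lemma \ref{lemma:extendinggenbricks}, the intermediate module must keep finite $\K(x)$-dimension at each vertex. For a subtraction, the new vertex carries a finite limit or colimit of finite-dimensional $\K(x)$-spaces. This space is again finite-dimensional over $\K(x)$, because the maps are $\K(x)$-linear: they come from the $\K(x)$-linear structure induced by the endomorphism action. Its $\K$-dimension may be $0$, but infinite length is inherited from the already existing vertices. I would check this compatibility with the $\K(x)$-structure explicitly, since the endolength computation needs it. Everything else follows directly from full faithfulness of the extension functors.
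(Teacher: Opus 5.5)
Your proposal is correct and follows the same route as the paper. You reduce $P$ to a critical poset $C$ via Lemma \ref{lemma:infinitereduction}, take the explicit family $\{B(\lambda)_C\}$ and generic brick $G_C$ from Theorem \ref{thm:genbrickcritical} (with Remark \ref{rem:allorientations}), and push both through the composite extension functor $\widetilde{E}$ using Lemmas \ref{lemma:extendingbrickcont} and \ref{lemma:extendinggenbricks}. Your two extra remarks are accurate refinements of points the paper leaves implicit: the vertex added by a subtraction may carry the zero space, so in general only $h_i\geq 0$ holds rather than $h_i>0$, and the $\K(x)$-structure on the limit or colimit agrees with the action of the endomorphism ring.
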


\begin{example}
	The poset $P$ can be reduced to the critical poset $\qDtilde_4$ using a contraction	
	
	\[\begin{tikzcd}[ampersand replacement=\&,cramped,sep=small]
		\&\& \bullet \&\&\&\&\&\& \bullet \& \\
		\bullet \& \star \&\& \star \& \bullet \& P \& {\qDtilde_4} \& \bullet \& \star \& \bullet \\
		\&\& \star \&\&\&\&\&\& \bullet \\
		\&\& \bullet
		\arrow[from=2-1, to=2-2]
		\arrow[from=2-2, to=1-3]
		\arrow[from=2-4, to=1-3]
		\arrow[from=2-4, to=2-5]
		\arrow[from=2-6, to=2-7]
		\arrow[from=2-8, to=2-9]
		\arrow[from=2-9, to=1-9]
		\arrow[from=2-9, to=2-10]
		\arrow[dashed, no head, from=3-3, to=1-3]
		\arrow[from=3-3, to=2-2]
		\arrow[from=3-3, to=2-4]
		\arrow[from=3-9, to=2-9]
		\arrow[from=4-3, to=3-3]
	\end{tikzcd}\]
	
	\noindent so it is representation-infinite by Theorem \ref{thm:Lou75concealed}. Thus, we can apply Theorem \ref{thm:genbrickinfinite} to obtain a generic brick for the poset $P$:
	
	\[\begin{tikzcd}[ampersand replacement=\&,cramped,sep=small]
		\& {K(x)} \&\&\&\&\&\& {K(x)} \&\& \\
		{K(x)} \& {K(x)^2} \& {K(x)} \& {G_{\qDtilde_4}} \& {\widetilde{E}(G_{\qDtilde_4})} \& {K(x)} \& {K(x)^2} \&\& {K(x)^2} \& {K(x)} \\
		\& {K(x)} \&\&\&\&\&\& {K(x)^2} \\
		\&\&\&\&\&\&\& {K(x)}
		\arrow["\begin{array}{c} \left(\begin{smallmatrix} 0 \\ 1 \end{smallmatrix} \right) \end{array}", from=2-1, to=2-2]
		\arrow["{\left(\begin{smallmatrix} 1 & 1 \end{smallmatrix} \right)}"', from=2-2, to=1-2]
		\arrow["{\left(\begin{smallmatrix} 1 & x \end{smallmatrix} \right)}"', from=2-2, to=2-3]
		\arrow["{\widetilde{E}}", maps to, from=2-4, to=2-5]
		\arrow["\begin{array}{c} \left(\begin{smallmatrix} 0 \\ 1 \end{smallmatrix} \right) \end{array}", from=2-6, to=2-7]
		\arrow["{\left(\begin{smallmatrix} 1 & 1 \end{smallmatrix} \right)}", from=2-7, to=1-8]
		\arrow["{\left(\begin{smallmatrix} 1 & 1 \end{smallmatrix} \right)}"', from=2-9, to=1-8]
		\arrow["{\left(\begin{smallmatrix} 1 & x \end{smallmatrix} \right)}", from=2-9, to=2-10]
		\arrow["\begin{array}{c} \left(\begin{smallmatrix} 1 \\ 0 \end{smallmatrix} \right) \end{array}", from=3-2, to=2-2]
		\arrow[dashed, no head, from=3-8, to=1-8]
		\arrow["1", from=3-8, to=2-7]
		\arrow["1"', from=3-8, to=2-9]
		\arrow["\begin{array}{c} \left(\begin{smallmatrix} 1 \\ 0 \end{smallmatrix} \right) \end{array}", from=4-8, to=3-8]
	\end{tikzcd}\]
	
\end{example}
\smallskip 

Applying ideas similar to those used previously, we can easily prove the following:

\begin{theorem}\label{thm:generalisation}
	Let $\Lambda, \Lambda'$ be finite-dimensional algebras.
	\begin{enumerate}
		\item If $\Lambda'$ is brick-continuous and $F \colon \Modf{\Lambda'} \to \Modf{\Lambda}$ is a fully faithful exact functor, then $\Lambda$ is brick-continuous.
		\item If $\Lambda'$ has a generic brick and $F \colon \Mod{\Lambda'} \to \Mod{\Lambda}$ is a fully faithful exact functor that preserves endofinite modules, then $\Lambda$ admits a generic brick.
	\end{enumerate}
\end{theorem}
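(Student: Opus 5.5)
For part (1), take a brick-continuous family $\{B_i\}_{i\in I}$ in $\Modf{\Lambda'}$ with the same dimension. By Lemma \ref{lemma:brickcontdef} we may assume the $B_i$ all share the same dimension vector $\underline{d}$. Full faithfulness of $F$ gives $\End_\Lambda(F(B_i))\cong\End_{\Lambda'}(B_i)$, which is a division ring. So each $F(B_i)$ is a brick. Likewise $F(B_i)\cong F(B_j)$ forces $B_i\cong B_j$, because a fully faithful functor reflects isomorphisms. Hence $\{F(B_i)\}_{i\in I}$ is an infinite family of pairwise non-isomorphic bricks in $\Modf{\Lambda}$.

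It remains to bound their dimensions. Here I would use exactness. The $B_i$ share a dimension vector, so they share composition factors with multiplicities $d_j$ of the simples $S'_j$ of $\Lambda'$. Applying the exact functor $F$ to a composition series of $B_i$ gives a filtration of $F(B_i)$ whose subquotients are the $F(S'_j)$, with the same multiplicities. Therefore
\[
\dim_\K F(B_i)=\sum_j d_j\dim_\K F(S'_j),
\]
which does not depend on $i$. This gives brick-continuity of $\Lambda$. If one prefers dimension vectors, the dimension vector of $F(B_i)$ is likewise $\sum_j d_j\,\dimvec{F(S'_j)}$, independent of $i$, and in any case the pigeonhole argument of Lemma \ref{lemma:brickcontdef} applies.

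For part (2), let $G$ be a generic brick over $\Lambda'$. Full faithfulness gives $\End_\Lambda(F(G))\cong\End_{\Lambda'}(G)$, a division ring. So $F(G)$ is a brick, in particular indecomposable. By hypothesis $F(G)$ is endofinite. The remaining point, which I expect to be the main obstacle, is to show that $F(G)$ has infinite length as a $\Lambda$-module.

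I would argue by contradiction. Suppose $F(G)$ has finite length. Then $F(G)$ is a finite-dimensional brick with division endomorphism ring $\End_\Lambda(F(G))$, which is a finite-dimensional division $\K$-algebra. Since $\K$ is algebraically closed, this ring equals $\K$. But $\End_{\Lambda'}(G)\cong\End_\Lambda(F(G))$, so $\End_{\Lambda'}(G)\cong\K$. Now $G$ is endofinite with endomorphism ring $\K$, so it has finite length over $\K$, i.e.\ $\dim_\K G<\infty$. That contradicts $G$ being generic. Hence $F(G)$ has infinite length and is a generic brick over $\Lambda$.

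Should this argument fail for some subtlety, the fallback is to use exactness and faithfulness. $F$ is faithful and exact, so it sends nonzero modules to nonzero modules and strict inclusions to strict inclusions. A strictly increasing chain of submodules of $G$, which exists since $G$ has infinite length, then gives a strictly increasing chain in $F(G)$, so $F(G)$ also has infinite length.
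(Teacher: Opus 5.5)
Your proposal is correct. Part (1) follows the paper. The paper uses Lemma~\ref{lemma:brickcontdef}, full faithfulness for brickness and non-isomorphism, and the homomorphism of Grothendieck groups induced by the exact functor $F$; your composition-series computation $\dim_\K F(B_i)=\sum_j d_j\dim_\K F(S'_j)$ is simply that homomorphism written out.

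In part (2), the paper gets infinite length of $F(G)$ ``by full faithfulness and exactness''. That is exactly your fallback: a faithful exact functor sends strict inclusions to strict inclusions.

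Your primary argument takes a genuinely different route. Instead of pushing a chain forward, you pull finiteness back through the endomorphism ring. You use two facts: a finite-dimensional brick over an algebraically closed field has endomorphism ring $\K$, and an endofinite module with endomorphism ring $\K$ is finite-dimensional.

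What this route buys is that exactness is never used. In (2), full faithfulness together with preservation of endofiniteness would then already suffice.

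What it costs is a dependence on $\K$ being algebraically closed, and a tacit assumption that $F$ is $\K$-linear. You need $\End_{\Lambda'}(G)\cong\End_\Lambda(F(G))$ to be an isomorphism of $\K$-algebras in order to conclude $\End_{\Lambda'}(G)=\K\cdot 1_G$. An abstract ring isomorphism with $\K$ would not suffice, since an algebraically closed field can be ring-isomorphic to a proper transcendental extension of itself. This is harmless in the paper's setting, where functors such as $f_*$ are $\K$-linear. The paper's route, which is also your fallback, works over any field and needs no linearity assumption.
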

\begin{proof}
	$(1)$ By Lemma \ref{lemma:brickcontdef} we can suppose to have a brick-continuous family $\{B_i\}_{i\in I}$ of $\Lambda'$ with the same dimension vector. Then $\{F(B_i)\}_{i\in I}$ is a family of non-isomorphic bricks (by fully faithfulness) with the same dimension vector (since exactness gives us a group homomorphism of the corresponding Grothendieck groups \cite[Ch.II, 6.1.5]{weibel2013k}).
	
	$(2)$ Let $G$ be a generic brick of $\Lambda'$. Then $F(G)$ is a brick (by fully faithfulness) of infinite length (by fully faithfulness and exactness) with finite endolength (since $F$ preserves endofinite modules), so a generic brick for $\Lambda$.
\end{proof}

\begin{remark}
	Theorem \ref{thm:generalisation} could be used to study properties of algebras \emph{linked} to incidence algebras:
	\begin{itemize}
		\itemsep=0em
		\item If $P$ is a representation-infinite poset and $f\colon\Lambda \to \incidencealgebra{P}$ is a ring epimorphism, then $\Lambda$ is brick-continuous and admits a generic brick. Indeed we can apply the Theorem to $F=f_*\colon \Mod{\incidencealgebra{P}} \to \Mod{\Lambda}$ (and $F=f_*\colon \Modf{\incidencealgebra{P}} \to \Modf{\Lambda}$).
		\item In general if $\Lambda'=\incidencealgebra{P}$ for a representation-infinite poset $P$, then the existence of a functor $F$ as in the Theorem gives us some information about $\Lambda$.
	\end{itemize}
\end{remark}

\begin{example}
	Consider the poset $\qR_3$. We have the ring epimorphism $f\colon \palgebra{\qR_3}\to \palgebra{\qR_3}/I_{\qR_3}=\incidencealgebra{\qR_3}$. The generic brick $G_{\qR_3}$ gives us a generic brick for the algebra $\palgebra{\qR_3}$:
	
	\[\begin{tikzcd}[ampersand replacement=\&,cramped,sep=small]
		\&\&\&\&\& {\K(x)^2} \&\&\& \\
		{\Mod{\incidencealgebra{\qR_3}}} \&\& {\K(x)} \& {\K(x)^2} \& {\K(x)^3} \&\& {\K(x)^3} \& {\K(x)^2} \& {\K(x)} \\
		\&\&\&\&\& {\K(x)^2} \\
		\&\&\&\&\& {\K(x)^2} \\
		{\Mod{\palgebra{\qR_3}}} \&\& {\K(x)} \& {\K(x)^2} \& {\K(x)^3} \&\& {\K(x)^3} \& {\K(x)^2} \& {\K(x)} \\
		\&\&\&\&\& {\K(x)^2}
		\arrow["{f_*}"', from=2-1, to=5-1]
		\arrow["{\phi_{1,1}}", from=2-3, to=2-4]
		\arrow["{\phi_{2,1}}", from=2-4, to=2-5]
		\arrow["{\beta_x}", from=2-5, to=1-6]
		\arrow["\alpha"', from=2-7, to=1-6]
		\arrow["{\psi_{2,1}}"', from=2-8, to=2-7]
		\arrow["{\psi_{1,1}}"', from=2-9, to=2-8]
		\arrow[dashed, no head, from=3-6, to=1-6]
		\arrow["{\psi_{2,1}}", from=3-6, to=2-5]
		\arrow["{\phi_{2,1}}"', from=3-6, to=2-7]
		\arrow["{\phi_{1,1}}", from=5-3, to=5-4]
		\arrow["{\phi_{2,1}}", from=5-4, to=5-5]
		\arrow["{\beta_x}", from=5-5, to=4-6]
		\arrow["\alpha"', from=5-7, to=4-6]
		\arrow["{\psi_{2,1}}"', from=5-8, to=5-7]
		\arrow["{\psi_{1,1}}"', from=5-9, to=5-8]
		\arrow["{\psi_{2,1}}", from=6-6, to=5-5]
		\arrow["{\phi_{2,1}}"', from=6-6, to=5-7]
	\end{tikzcd}\]	
	
\end{example}

	\newpage
	
	\appendix 
	
	\section{List of generic bricks of critical posets}\label{appendix}
	
	In this Appendix we will list the generic bricks constructed following the proof of Theorem \ref{thm:genbrickcritical} (choosing a particular orientation of the edges for each family and considering the opposite poset in some cases, namely $\qR_1, \qR_2$ and $\qR_6$). For a critical poset $C$ we denoted by $G_C$ the constructed generic brick. We recall (as in the proof of the Theorem) that the brick-continuous family $B(\lambda)_C$ of $C$ (with $\lambda\in\K$) is obtained from the representation of $G_C$ substituting $\K(x)^n$ with $\K^n$ (for any $n$) in the vertices and the map $\beta_x$ becomes $\beta_\lambda$ (substituting $x$ with $\lambda\in\K$).

Define $\phi_{n,m}\defeq \left( \begin{smallmatrix} \boldsymbol{1}_n \\ \boldsymbol{0}_{m,n}\end{smallmatrix} \right)$, $\psi_{n,m}\defeq \left( \begin{smallmatrix} \boldsymbol{0}_{m,n} \\ \boldsymbol{1}_n \end{smallmatrix} \right)$ (with $\boldsymbol{1}_n \in \matspace{n}$ identity matrix, $\boldsymbol{0}_{m,n}\in \matspace{m\times n}$ zero matrix).

\begin{minipage}{0.45\textwidth}
	\noindent $(\qAtilde_3)$
	\[\begin{tikzcd}[ampersand replacement=\&,cramped,sep=tiny]
		\& {\K(x)} \& \\
		{\K(x)} \&\& {\K(x)} \\
		\& {\K(x)}
		\arrow["x", color={rgb,255:red,255;green,5;blue,5}, from=2-1, to=1-2]
		\arrow["1"', from=2-1, to=3-2]
		\arrow["1"', from=2-3, to=1-2]
		\arrow["1", from=2-3, to=3-2]
	\end{tikzcd}\]
\end{minipage}
\hfill
\begin{minipage}{0.45\textwidth}
	\noindent $(\qDtilde_4)$
	\[\begin{tikzcd}[ampersand replacement=\&,cramped,sep=tiny]
		{\K(x)} \&\& {\K(x)} \\
		\& {\K(x)^2} \\
		{\K(x)} \&\& {\K(x)}
		\arrow["\begin{array}{c} \begin{array}{c} \left( \begin{smallmatrix} 1 \\ 0\end{smallmatrix}\right) \end{array} \end{array}"', from=1-3, to=2-2]
		\arrow["{{\left( \begin{smallmatrix} 1 & 1 \end{smallmatrix}\right)}}", from=2-2, to=1-1]
		\arrow["{{\left( \begin{smallmatrix} x & 1 \end{smallmatrix}\right)}}", color={rgb,255:red,255;green,51;blue,51}, from=2-2, to=3-1]
		\arrow["\begin{array}{c} \begin{array}{c} \left( \begin{smallmatrix} 0 \\ 1\end{smallmatrix}\right) \end{array} \end{array}"', from=3-3, to=2-2]
	\end{tikzcd}\]
\end{minipage}
\smallskip

\begin{minipage}{0.45\textwidth}
	\noindent $(\qEtilde_6)$
	\[\begin{tikzcd}[ampersand replacement=\&,sep=tiny]
		\&\& {\K(x)} \&\& \\
		\&\& {\K(x)^2} \\
		{\K(x)} \& {\K(x)^2} \& {\K(x)^3} \& {\K(x)^2} \& {\K(x)}
		\arrow["{\phi_{1,1}}", from=1-3, to=2-3]
		\arrow["{\beta_x}", color={rgb,255:red,255;green,51;blue,54}, from=2-3, to=3-3]
		\arrow["{\phi_{1,1}}"', from=3-1, to=3-2]
		\arrow["{\phi_{2,1}}"', from=3-2, to=3-3]
		\arrow["{\psi_{2,1}}", from=3-4, to=3-3]
		\arrow["{\psi_{1,1}}", from=3-5, to=3-4]
	\end{tikzcd}\]
\end{minipage}
\hfill
\begin{minipage}{0.45\textwidth}
	$\beta_x\defeq \left( \begin{smallmatrix} x & 1 \\ 1 & 1 \\ 1 & 0 \end{smallmatrix} \right)$
\end{minipage}
\smallskip

\begin{minipage}{0.45\textwidth}
	\noindent $(\qEtilde_7)$
	\[\begin{tikzcd}[ampersand replacement=\&,sep=tiny]
		\&\&\& {\K(x)^2} \&\&\& \\
		{\K(x)} \& {\K(x)^2} \& {\K(x)^3} \& {\K(x)^4} \& {\K(x)^3} \& {\K(x)^2} \& {\K(x)}
		\arrow["{\beta_x}", color={rgb,255:red,255;green,51;blue,54}, from=1-4, to=2-4]
		\arrow["{\phi_{1,1}}", from=2-1, to=2-2]
		\arrow["{\phi_{2,1}}", from=2-2, to=2-3]
		\arrow["{\phi_{3,1}}", from=2-3, to=2-4]
		\arrow["{\psi_{3,1}}", from=2-5, to=2-4]
		\arrow["{\psi_{2,1}}", from=2-6, to=2-5]
		\arrow["{\psi_{1,1}}", from=2-7, to=2-6]
	\end{tikzcd}\]
\end{minipage}
\hfill
\begin{minipage}{0.3\textwidth}
	$\beta_x\defeq \left( \begin{smallmatrix} 1 & x  \\ 1 & 0 \\ 1 & 1 \\ 0 & 1 \end{smallmatrix} \right)$
\end{minipage}
\smallskip

\begin{minipage}{0.45\textwidth}
	\noindent $(\qEtilde_8)$
	\[\begin{tikzcd}[ampersand replacement=\&,sep=tiny]
		\&\& {\K(x)^3} \&\&\&\&\& \\
		{\K(x)^2} \& {\K(x)^4} \& {\K(x)^6} \& {\K(x)^5} \& {\K(x)^4} \& {\K(x)^3} \& {\K(x)^2} \& {\K(x)}
		\arrow["{\beta_x}", color={rgb,255:red,255;green,51;blue,54}, from=1-3, to=2-3]
		\arrow["{\psi_{2,2}}", from=2-1, to=2-2]
		\arrow["{\psi_{4,2}}", from=2-2, to=2-3]
		\arrow["{\phi_{5,1}}", from=2-4, to=2-3]
		\arrow["{\phi_{4,1}}", from=2-5, to=2-4]
		\arrow["{\phi_{3,1}}", from=2-6, to=2-5]
		\arrow["{\phi_{2,1}}", from=2-7, to=2-6]
		\arrow["{\phi_{1,1}}", from=2-8, to=2-7]
	\end{tikzcd}\]
\end{minipage}
\hfill
\begin{minipage}{0.2\textwidth}
	$\beta_x\defeq \left( \begin{smallmatrix} x & 1 & 0 \\ 0 & 0 & 1 \\ 1 & 1 & 0 \\ 1 & 0 & 1 \\ 1 & 1 & 0 \\ 0 & 1 & 0 \end{smallmatrix} \right)$
\end{minipage}
\smallskip

\begin{minipage}{0.45\textwidth}
	\noindent $(\qR_1)$
	\[\begin{tikzcd}[ampersand replacement=\&,sep=tiny]
		\&\&\&\&\& {\K(x)^2} \& \\
		{\K(x)} \& {\K(x)^2} \& {\K(x)^3} \& {\K(x)^4} \& {\K(x)^5} \&\& {\K(x)^3} \\
		\&\&\&\&\& {\K(x)^4} \\
		\&\&\&\&\& {\K(x)^2}
		\arrow["{\phi_{1,1}}", from=2-1, to=2-2]
		\arrow["{\phi_{2,1}}", from=2-2, to=2-3]
		\arrow["{\phi_{3,1}}", from=2-3, to=2-4]
		\arrow["{\phi_{4,1}}", from=2-4, to=2-5]
		\arrow["{\beta_x}", color={rgb,255:red,255;green,61;blue,51}, from=2-5, to=1-6]
		\arrow["{\alpha_2}"', from=2-7, to=1-6]
		\arrow[dashed, no head, from=3-6, to=1-6]
		\arrow["{\psi_{4,1}}", from=3-6, to=2-5]
		\arrow["{\alpha_1}"', from=3-6, to=2-7]
		\arrow["{\psi_{2,2}}", from=4-6, to=3-6]
	\end{tikzcd}\]
\end{minipage}
\hfill
\begin{minipage}{0.3\textwidth}
	$\alpha_1 \defeq \left( \begin{smallmatrix} \boldsymbol{1}_3 & \begin{smallmatrix}
			0 \\ 1 \\ 0
	\end{smallmatrix} \end{smallmatrix} \right)$ 
	\smallskip
	
	$\alpha_2 \defeq \left( \begin{smallmatrix} \boldsymbol{1}_2 & \begin{smallmatrix}
			1 \\ 1
	\end{smallmatrix} \end{smallmatrix} \right)$
	\smallskip
	
	$\beta_x \defeq \left( \begin{smallmatrix} x & 1 & 0 & 1 & 0 \\ 1 & 0 & 1 & 1 & 1
	\end{smallmatrix} \right)$
	
\end{minipage}
\smallskip

\begin{minipage}{0.45\textwidth}
	\noindent $(\qR_2)$
	\[\begin{tikzcd}[ampersand replacement=\&,sep=tiny]
		\&\&\&\& {\K(x)} \&\& \\
		\&\&\& {\K(x)^3} \&\& {\K(x)^4} \& {\K(x)^2} \\
		{\K(x)} \& {\K(x)^2} \& {\K(x)^3} \& {\K(x)^4} \& {\K(x)^5}
		\arrow["{\beta'_x}", color={rgb,255:red,255;green,51;blue,51}, from=2-4, to=1-5]
		\arrow["{\beta_x}"', color={rgb,255:red,255;green,51;blue,51}, from=2-6, to=1-5]
		\arrow["{\psi_{2,2}}"', from=2-7, to=2-6]
		\arrow["{\phi_{1,1}}"', from=3-1, to=3-2]
		\arrow["{\phi_{2,1}}"', from=3-2, to=3-3]
		\arrow["{\phi_{3,1}}"', from=3-3, to=3-4]
		\arrow["{\phi_{4,1}}"', from=3-4, to=3-5]
		\arrow[dashed, no head, from=3-5, to=1-5]
		\arrow["{\alpha_2}"{pos=0.6}, from=3-5, to=2-4]
		\arrow["{\alpha_1}"', from=3-5, to=2-6]
	\end{tikzcd}\]
\end{minipage}
\hfill
\begin{minipage}{0.3\textwidth}
	$\alpha_1 \defeq \left( \begin{smallmatrix} \boldsymbol{1}_4 & \begin{smallmatrix}
			0 \\ 0 \\ 0 \\ 0
	\end{smallmatrix} \end{smallmatrix} \right)$ 
	\smallskip
	
	$\alpha_2 \defeq \left( \begin{smallmatrix} \boldsymbol{1}_3 & \begin{smallmatrix}
			0 & 1 \\ 1 & 1 \\ 0 & 1
	\end{smallmatrix} \end{smallmatrix} \right)$
	\smallskip
	
	$\beta_x \defeq \left( \begin{smallmatrix} 1 & x & (-1-x) & x	\end{smallmatrix} \right)$
	\smallskip 
	
	$\beta'_x \defeq \left( \begin{smallmatrix} 1 & x & (-1-x)	\end{smallmatrix} \right)$
\end{minipage}
\smallskip

\begin{minipage}{0.45\textwidth}
	\noindent $(\qR_3)$
	\[\begin{tikzcd}[ampersand replacement=\&,sep=tiny]
		\&\&\& {\K(x)^2} \&\&\& \\
		{\K(x)} \& {\K(x)^2} \& {\K(x)^3} \&\& {\K(x)^3} \& {\K(x)^2} \& {\K(x)} \\
		\&\&\& {\K(x)^2}
		\arrow["{\phi_{1,1}}", from=2-1, to=2-2]
		\arrow["{\phi_{2,1}}", from=2-2, to=2-3]
		\arrow["{\beta_x}", color={rgb,255:red,255;green,51;blue,54}, from=2-3, to=1-4]
		\arrow["\alpha"', from=2-5, to=1-4]
		\arrow["{\psi_{2,1}}"', from=2-6, to=2-5]
		\arrow["{\psi_{1,1}}"', from=2-7, to=2-6]
		\arrow[dashed, no head, from=3-4, to=1-4]
		\arrow["{\psi_{2,1}}", from=3-4, to=2-3]
		\arrow["{\phi_{2,1}}"', from=3-4, to=2-5]
	\end{tikzcd}\]
\end{minipage}
\hfill
\begin{minipage}{0.3\textwidth}
	$\alpha \defeq \left( \begin{smallmatrix} 1 & 1 & 0 \\ 0 & 1 & 1 \end{smallmatrix} \right)$
	\smallskip
	
	$\beta_x \defeq \left( \begin{smallmatrix} x & 1 & 1 \\ 1 & 0 & 1	\end{smallmatrix} \right)$
\end{minipage}
\smallskip

\begin{minipage}{0.45\textwidth}
	\noindent $(\qR_4)$
	\[\begin{tikzcd}[ampersand replacement=\&,sep=tiny]
		\&\& {\K(x)^3} \&\&\&\&\& \\
		{\K(x)^2} \& {\K(x)^4} \&\& {\K(x)^5} \& {\K(x)^4} \& {\K(x)^3} \& {\K(x)^2} \& {\K(x)} \\
		\&\& {\K(x)^3}
		\arrow["{\psi_ {2,2}}", from=2-1, to=2-2]
		\arrow["{\beta_x}", color={rgb,255:red,255;green,51;blue,54}, from=2-2, to=1-3]
		\arrow["\alpha"', from=2-4, to=1-3]
		\arrow["{\phi_ {4,1}}"', from=2-5, to=2-4]
		\arrow["{\phi_ {3,1}}"', from=2-6, to=2-5]
		\arrow["{\phi_ {2,1}}"', from=2-7, to=2-6]
		\arrow["{\phi_ {1,1}}"', from=2-8, to=2-7]
		\arrow[dashed, no head, from=3-3, to=1-3]
		\arrow["{\psi_ {3,1}}", from=3-3, to=2-2]
		\arrow["{\psi_ {3,2}}"', from=3-3, to=2-4]
	\end{tikzcd}\]
\end{minipage}
\hfill
\begin{minipage}{0.2\textwidth}
	$\alpha \defeq \left( \begin{smallmatrix}
		\begin{smallmatrix} 0 & 1 \\ 1 & 0 \\ 1 & 1 \end{smallmatrix} & \boldsymbol{1}_3 \end{smallmatrix} \right)$
	\smallskip
	
	$\beta_x \defeq \left( \begin{smallmatrix} \boldsymbol{1}_3 & \begin{smallmatrix} 0 & x \\ 0 & 1 \\ 1 & 0	\end{smallmatrix} \end{smallmatrix} \right)$
\end{minipage}
\smallskip

\begin{minipage}{0.45\textwidth}
	\noindent $(\qR_5)$
	\[\begin{tikzcd}[ampersand replacement=\&,sep=tiny]
		\&\&\&\&\& {\K(x)^3} \& \\
		{\K(x)} \& {\K(x)^2} \& {\K(x)^3} \& {\K(x)^4} \& {\K(x)^5} \&\& {\K(x)^2} \\
		\&\&\&\& {\K(x)^3} \\
		\&\&\&\&\& {\K(x)}
		\arrow["{\phi_{1,1}}", from=2-1, to=2-2]
		\arrow["{\phi_{2,1}}", from=2-2, to=2-3]
		\arrow["{\phi_{3,1}}", from=2-3, to=2-4]
		\arrow["{\phi_{4,1}}", from=2-4, to=2-5]
		\arrow["{\beta'_x}", color={rgb,255:red,255;green,51;blue,54}, from=2-5, to=1-6]
		\arrow["{\beta_x}"', color={rgb,255:red,255;green,51;blue,54}, from=2-7, to=1-6]
		\arrow["{\psi_{3,2}}", from=3-5, to=2-5]
		\arrow[dashed, no head, from=4-6, to=1-6]
		\arrow["{\psi_{1,1}}"', from=4-6, to=2-7]
		\arrow["{\psi_{1,2}}", from=4-6, to=3-5]
	\end{tikzcd}\]
\end{minipage}
\hfill
\begin{minipage}{0.3\textwidth}
	$\beta_x \defeq \left( \begin{smallmatrix} 1 & x \\ 1 & 1 \\ 0 & 1 \end{smallmatrix} \right)$
	\smallskip
	
	$\beta'_x \defeq \left( \begin{smallmatrix} \boldsymbol{1}_3 & \begin{smallmatrix} 0 & x \\ 1 & 1 \\ 0 & 1	\end{smallmatrix} \end{smallmatrix} \right)$
\end{minipage}
\smallskip

\begin{minipage}{0.45\textwidth}
	\noindent $(\qR_6)$
	\[\begin{tikzcd}[ampersand replacement=\&,sep=tiny]
		\&\& {\K(x)^2} \&\&\&\& \\
		\& {\K(x)^3} \\
		{\K(x)^2} \& {\K(x)^4} \&\& {\K(x)^4} \& {\K(x)^3} \& {\K(x)^2} \& {\K(x)} \\
		\&\& {\K(x)^3}
		\arrow["{\alpha_2}", from=2-2, to=1-3]
		\arrow["{\psi_{2,2}}", from=3-1, to=3-2]
		\arrow["{\alpha_1}", from=3-2, to=2-2]
		\arrow["{\beta_x}"', color={rgb,255:red,255;green,51;blue,51}, from=3-4, to=1-3]
		\arrow["{\phi_{3,1}}"', from=3-5, to=3-4]
		\arrow["{\phi_{2,1}}"', from=3-6, to=3-5]
		\arrow["{\phi_{1,1}}"', from=3-7, to=3-6]
		\arrow[dashed, no head, from=4-3, to=1-3]
		\arrow["{\phi_{3,1}}", from=4-3, to=3-2]
		\arrow["{\psi_{3,1}}"', from=4-3, to=3-4]
	\end{tikzcd}\]
\end{minipage}
\hfill
\begin{minipage}{0.3\textwidth}
	$\alpha_1 \defeq \left( \begin{smallmatrix} \boldsymbol{1}_3 & \begin{smallmatrix}
			0 \\ 1 \\ 0
	\end{smallmatrix} \end{smallmatrix} \right)$ 
	\smallskip
	
	$\alpha_2 \defeq \left( \begin{smallmatrix} \boldsymbol{1}_2 & \begin{smallmatrix}
			1 \\ 1
	\end{smallmatrix} \end{smallmatrix} \right)$
	\smallskip
	
	$\beta_x \defeq \left( \begin{smallmatrix}  x & 1 & 0 & 1 \\ 1 & 0 & 1 & 1	\end{smallmatrix} \right)$
\end{minipage}
\smallskip

\begin{minipage}{0.45\textwidth}
	\noindent $(\qR_7)$
	\[\begin{tikzcd}[ampersand replacement=\&,sep=tiny]
		\&\& {\K(x)^2} \&\&\& \\
		\& {\K(x)^3} \\
		{\K(x)^2} \& {\K(x)^4} \&\& {\K(x)^3} \& {\K(x)^2} \& {\K(x)} \\
		\& {\K(x)^3} \\
		\&\& {\K(x)^2}
		\arrow["{\alpha_2}", from=2-2, to=1-3]
		\arrow["{\psi_{2,2}}", from=3-1, to=3-2]
		\arrow["{\alpha_1}", from=3-2, to=2-2]
		\arrow["{\beta_x}"', color={rgb,255:red,255;green,51;blue,51}, from=3-4, to=1-3]
		\arrow["{\phi_{2,1}}"', from=3-5, to=3-4]
		\arrow["{\phi_{1,1}}"', from=3-6, to=3-5]
		\arrow["{\phi_{3,1}}", from=4-2, to=3-2]
		\arrow[dashed, no head, from=5-3, to=1-3]
		\arrow["{\psi_{2,1}}"', from=5-3, to=3-4]
		\arrow["{\phi_{2,1}}", from=5-3, to=4-2]
	\end{tikzcd}\]
\end{minipage}
\hfill
\begin{minipage}{0.3\textwidth}
	$\alpha_1 \defeq \left( \begin{smallmatrix} \boldsymbol{1}_3 & \begin{smallmatrix}
			0 \\ 1 \\ 0
	\end{smallmatrix} \end{smallmatrix} \right)$ 
	\smallskip
	
	$\alpha_2 \defeq \left( \begin{smallmatrix} \boldsymbol{1}_2 & \begin{smallmatrix}
			1 \\ 1
	\end{smallmatrix} \end{smallmatrix} \right)$
	\smallskip
	
	$\beta_x \defeq \left( \begin{smallmatrix}  x & 1 & 0 \\ 1 & 0 & 1	\end{smallmatrix} \right)$
\end{minipage}

	\printbibliography

\end{document}